\newtheorem{lemma}{Lemma}
\newtheorem{theorem}{Theorem}
\numberwithin{equation}{section}
\numberwithin{figure}{section}
\numberwithin{lemma}{section}
\numberwithin{theorem}{section}
\numberwithin{definition}{section}
\title{\LARGE \bf Supplementary Material to: Passive Controller Realization of a Biquadratic Impedance with Double Poles and Zeros
as a Seven-Element Series-Parallel Network for Effective Mechanical Control}
\date{}
\author{Kai~Wang$^{1}$, ~~Michael~Z.~Q.~Chen$^{2,\ast}$, ~\IEEEmembership{Senior Member,~IEEE,}~~
Chanying Li$^{3,4}$, ~\IEEEmembership{Member,~IEEE,}~~ and
Guanrong Chen$^{5}$,~\IEEEmembership{Fellow,~IEEE}
\thanks{$^{1}$Key Laboratory of Advanced Process Control for Light Industry (Ministry of Education), School of Internet of Things Engineering, Jiangnan University, Wuxi, Jiangsu 214122, P. R. China.}
\thanks{$^{2}$School of Automation, Nanjing University of Science and Technology, Nanjing, Jiangsu 210094, P. R. China.}
\thanks{$^{3}$Key Laboratory of Systems and Control, Academy of Mathematics and Systems Science, Chinese Academy of Sciences, Beijing 100190, China.}
\thanks{$^{4}$School of Mathematical Sciences, University of Chinese Academy of Sciences, Beijing 100049, China.}
\thanks{$^{5}$Department of Electronic Engineering, City University of Hong Kong, Tat Chee Avenue, Kowloon, Hong Kong.}
\thanks{This work is supported by the National Natural Science Foundation of China under grants 61703184 and 61374053.}
\thanks{Correspondence: MZQ Chen, mzqchen@outlook.com.}
}
\begin{document}

\maketitle
\thispagestyle{empty}


\begin{abstract}
This report presents some supplementary material to  the paper entitled   ``Passive controller realization of a biquadratic impedance with double poles and zeros
as a seven-element Series-parallel network for effective mechanical control'' \cite{WCLC18}.

\medskip

\noindent{\em Keywords:} Passive mechanical control, passive network synthesis,  inerters, biquadratic impedances, series-parallel networks.

\end{abstract}

%
\IEEEpeerreviewmaketitle

\section{Introduction}
This report presents some supplementary material to the paper entitled   ``Passive controller realization of a biquadratic impedance with double poles and zeros as a seven-element Series-parallel network for effective mechanical control'' \cite{WCLC18}, which are omitted from the paper for brevity.

\section{Preliminaries of Passive Network Synthesis}

A two-terminal electrical network is defined to be \textit{passive} if $\int_{-\infty}^{T} i(t)v(t) dt \geq 0$ for all $T$ and for all admissible current $i(t)$ and voltage $v(t)$ \cite{AV73}.
A real-rational function $F(s)$ is \textit{positive real} if $F(s)$ is analytic and $\mathfrak{R}(F(s)) \geq 0$ for any $\mathfrak{R}(s) > 0$ \cite{Gui57}. An \textit{impedance} (resp. \textit{admittance}) is defined to be $Z(s) = V(s)/I(s)$ (resp. $Y(s) = I(s)/V(s)$), where $V(s)$ and $I(s)$ are voltages and currents of the port of a two-terminal network, where  the network is said to \textit{realize} (or is a realization of) its impedance (resp. admittance).
A two-terminal electrical network is passive if and only if its impedance (resp. admittance) is positive real,
and any positive-real impedance (resp. admittance) is realizable as a two-terminal passive $RLC$ network \cite{Gui57}.
Any positive-real impedance (resp. admittance) is realizable as a two-terminal passive $RLC$ network.
A \textit{reactive element} is an inductor or capacitor, and a resistor is also called as a   \textit{resistive element}.

\section{Definitions of the network duality and the frequency inverse}

Any two-terminal passive $RLC$ network $N$ can be regarded as a \textit{one-terminal-pair labeled graph} $\mathcal{N}$ with two distinguished \textit{terminal vertices} (see \cite[pg.~14]{SR61}), in which the labels designate passive circuit elements regardless of element values, namely resistors, capacitors, and inductors, which are labeled as $R_i$, $C_i$, and $L_i$, respectively.

Two natural maps acting on the labeled graph are defined as follows:
\begin{enumerate}
  \item $\text{GDu} :=$ Graph duality, which takes the one-terminal-pair graph into its dual (see \cite[Definition~3-12]{SR61}) while preserving the labeling.
  \item $\text{Inv}  :=$ Inversion, which preserves the graph but interchanges the reactive elements, that is, capacitors to inductors and inductors to capacitors with their labels $C_i$ to $L_i$ and $L_i$ to $C_i$.
\end{enumerate}
Consequently, one defines
\begin{equation*}
\text{Dual} :=  \text{network duality of one-terminal-pair labeled graph} := \text{GDu} \circ \text{Inv} = \text{Inv} \circ \text{GDu}.
\end{equation*}

Consider a network $N$ whose one-terminal-pair labeled graph is $\mathcal{N}$.
Denote $\text{Inv}(N)$ as the network whose one-terminal-pair labeled graph is $\text{Inv}(\mathcal{N})$,  resistors are of the same values as those of $N$, and  inductors (resp. capacitors) are replaced by capacitors (resp. inductors) with reciprocal values, which is called the \textit{frequency inverse network} of $N$.
Denote $\text{GDu}(N)$  as the network whose one-terminal-pair labeled graph is $\text{GDu}(\mathcal{N})$ and elements are of the reciprocal values to those of $N$, which is called the \textit{frequency inverse dual network} of $N$.
Denote $\text{Dual}(N)$ as the network whose one-terminal-pair labeled graph is $\text{Dual}(\mathcal{N})$,   resistors  are of reciprocal values to those of $N$, and   inductors (resp. capacitors) are replaced by capacitors (resp. inductors) with same values, which is called the \textit{dual network} of $N$.

It can be proved that $Z(s)$ (resp. $Y(s)$) is realizable as the impedance (resp. admittance)
of a network $N$ whose one-terminal-pair labeled graph is $\mathcal{N}$,
if and only if $Z(s^{-1})$ (resp. $Y(s^{-1})$) is realizable as the impedance (resp. admittance) of $\text{Inv}(N)$ whose one-terminal-pair labeled graph is $\text{Inv}(\mathcal{N})$, if and only if
$Z(s^{-1})$ (resp. $Y(s^{-1})$) is realizable as the admittance (resp. impedance) of $\text{GDu}(N)$ whose one-terminal-pair labeled graph is $\text{GDu}(\mathcal{N})$, and if and only if
it is realizable as
the admittance (resp. impedance) of $\text{Dual}(N)$ whose one-terminal-pair labeled graph is $\text{Dual}(\mathcal{N})$.
Therefore, if a necessary and sufficient condition  is derived for
$H(s) =
\sum_{k=0}^m a_k s^k/\sum_{k=0}^m b_k s^k$ to be realizable as the impedance (resp. admittance) of a two-terminal network  whose one-terminal-pair labeled graph is $\mathcal{N}$, then the corresponding condition for $\text{Inv}(\mathcal{N})$ can be obtained from that for $\mathcal{N}$ through conversion $a_k \leftrightarrow a_{m-k}$ and $b_k \leftrightarrow b_{m-k}$ for $k = 0,1,...,\lfloor m/2 \rfloor$ (the \textit{principle of frequency inversion}).
The corresponding condition for  $\text{GDu}(\mathcal{N})$ can be obtained from that for $\mathcal{N}$ through conversion $a_k \leftrightarrow b_{m-k}$ for $k = 0,1,...,m$ (the \textit{principle of frequency-inverse duality}).
Furthermore, the  corresponding condition for $\text{Dual}(\mathcal{N})$ can be obtained from that for $\mathcal{N}$ through conversion $a_k \leftrightarrow b_k$ for $k = 0, 1,..., m$ (the \textit{principle of duality}).

Specifically, based on the principle of frequency inversion,
a necessary and sufficient condition for $Z(s)$ in the form of (1) with $k$, $z$, $p$ $> 0$ to be realizable as the impedance  of a two-terminal network whose one-terminal-pair labeled graph is $\text{Inv}(\mathcal{N})$ can be obtained from that for $\mathcal{N}$ through $z \leftrightarrow z^{-1}$ and $p \leftrightarrow p^{-1}$.
Based on the principle of frequency-inversion duality, a necessary and sufficient condition for $Z(s)$ in the form of (1) with $k$, $z$, $p$ $> 0$ to be realizable as the impedance  of a two-terminal network whose one-terminal-pair labeled graph is $\text{GDu}(\mathcal{N})$ can be obtained from that for $\mathcal{N}$  through $p \leftrightarrow z^{-1}$ and $z \leftrightarrow p^{-1}$.
Based on the principle of duality, a necessary and sufficient condition for $Z(s)$ in the form of (1) with $k$, $z$, $p$ $> 0$ to be realizable as the impedance of a two-terminal network whose one-terminal-pair labeled graph is $\text{Dual}(\mathcal{N})$ can be obtained from that for $\mathcal{N}$  through $p \leftrightarrow z$.


\section{Realizability as Series-Parallel Networks with No More Than Five Elements}

\begin{theorem}  \label{theorem: fewer than four elements}
{A biquadratic impedance $Z(s)$ in the form of (1), that is,
\begin{equation*}
Z(s) = k \frac{(s + z)^2}{(s + p)^2},
\end{equation*}
where $k$, $z$, $p$ $> 0$ and $p \neq z$, cannot be realized with fewer than four elements.}
\end{theorem}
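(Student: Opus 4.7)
The plan is to argue by element count, reducing to a finite number of topologies. First, since $p \neq z$, the numerator $(s+z)^2$ and denominator $(s+p)^2$ of $Z(s)$ share no common factor, so $Z(s)$ has McMillan degree exactly $2$. Any passive realization must therefore contain at least two reactive elements. Moreover, $Z(s)$ has poles at $s = -p < 0$, which do not lie on the imaginary axis, so $Z(s)$ is not a reactance (lossless) function and hence at least one resistor is required. Consequently, any realization of $Z(s)$ needs at least three elements, and the task reduces to ruling out three-element networks.

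I would next enumerate the possible element compositions of a three-element network. Splitting three elements into reactive and resistive parts leaves four cases: three reactive, three resistive, two resistive plus one reactive, and two reactive plus one resistive. The first case produces a reactance function and is excluded by the losslessness argument; the second and third yield impedances of McMillan degree $0$ or $1$, both strictly less than $2$, and are also excluded. Only the case of two reactive elements and one resistor remains, with three subcases determined by the reactive element types: $2L+1R$, $2C+1R$, and $1L+1C+1R$.

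For the $2L+1R$ and $2C+1R$ subcases, the network is purely $RL$ or purely $RC$, and the classical Foster-type characterization of $RL$ and $RC$ driving-point functions states that their poles and zeros are all simple and lie on the non-positive real axis. This contradicts the double pole of $Z(s)$ at $s=-p$ and eliminates both subcases. For the $1L+1C+1R$ subcase, I use the fact that any two-terminal network with only three edges is series-parallel, and enumerate the eight topologies: all three in series, all three in parallel, and, for each element $X\in\{R,L,C\}$, the cases where $X$ is in series with the parallel combination of the other two, and where $X$ is in parallel with the series combination of the other two. For each topology, a direct computation of $Z(s)$ shows it has either a pole or zero at $s=0$ or $s=\infty$, or a pair of purely imaginary poles or zeros, none of which is compatible with the form $k(s+z)^2/(s+p)^2$ having a finite nonzero double pole and double zero on the negative real axis.

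The main obstacle is the explicit enumeration in the $1L+1C+1R$ subcase; it is routine but must be handled carefully. The work can be nearly halved by invoking the principle of duality established earlier: the eight topologies pair up under $\mathrm{Dual}$, and the family $k(s+z)^2/(s+p)^2$ is stable under the $p \leftrightarrow z$ conversion, so one representative of each dual pair suffices to treat the whole subcase.
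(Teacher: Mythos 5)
Your proof is correct, but it takes a genuinely different route from the paper. The paper's proof is a two-line reduction: it writes $Z(s)$ in the general biquadratic form $(As^2+Bs+C)/(Ds^2+Es+F)$ with $A=kx$, $B=2kzx$, $C=kz^2x$, $D=x$, $E=2px$, $F=p^2x$, and then simply invokes the published necessary and sufficient conditions of \cite{WCH14} for realizability of a biquadratic with at most three elements, checking that they fail when $p\neq z$. You instead give a self-contained argument: the McMillan degree of $Z(s)$ is exactly $2$ (no pole--zero cancellation since $p\neq z$), forcing at least two reactive elements by \cite[Theorem~4.4.3]{AV73}; the pole at $s=-p$ off the imaginary axis rules out a lossless realization, forcing at least one resistor; and the residual case of exactly three elements is disposed of by the Foster/Cauer pole--zero restrictions for $RL$ and $RC$ one-ports (simple poles and zeros only, so no double pole at $-p$) together with an explicit enumeration of the eight $RLC$ three-element series-parallel topologies, each of which has a pole or zero at $0$ or $\infty$ or on the imaginary axis and so cannot equal $k(s+z)^2/(s+p)^2$. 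Your observation that every three-edge two-terminal graph is series-parallel (the smallest non-series-parallel one-port is the five-edge bridge) is what makes the enumeration exhaustive, and the duality reduction halving the case count is a nice touch consistent with the principles stated in Section~III of the paper. What the paper's approach buys is brevity and uniformity with the proofs of Theorems~4.2 and 4.3, which lean on the same external classifications; what your approach buys is independence from \cite{WCH14} and a transparent explanation of \emph{why} three elements cannot suffice, at the cost of a case analysis that must be carried out carefully. I see no gap in your argument.
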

\begin{proof}
Let $A = kx$, $B = 2kzx$, $C = k z^2x$, $D = x$, $E = 2px$, and $F = p^2x$ for  $x > 0$. This theorem can be proved from the realizability conditions of a general biquadratic impedance in the form of (2), that is,
\begin{equation}  \label{eq: general biquadratic impedances}
Z(s) = \frac{A s^2 + B s + C}{D s^2 + E s + F},
\end{equation}
where $A$, $B$, $C$, $D$, $E$, $F$ $> 0$, with at most three elements in \cite{WCH14}.
\end{proof}

\begin{theorem}  \label{theorem: four elements}
{A biquadratic impedance $Z(s)$ in the form of (1), where $k$, $z$, $p$ $> 0$ and $p \neq z$, is realizable as a four-element series-parallel network if and only if $p = z/3$ or $p = 3z$.}
\end{theorem}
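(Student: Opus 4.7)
The strategy is to leverage the classification of four-element series-parallel realizations of a general biquadratic impedance in the form of (2) established in \cite{WCH14}, and then specialize to the double-pole/double-zero form (1) via the parametrization $A = kx$, $B = 2kzx$, $C = kz^2 x$, $D = x$, $E = 2px$, $F = p^2 x$ already used in the proof of Theorem~\ref{theorem: fewer than four elements}. Since Theorem~\ref{theorem: fewer than four elements} rules out any realization with three or fewer elements, every four-element series-parallel realization must contain both resistive and reactive elements, and, since $Z(0) = kz^2/p^2$ and $Z(\infty) = k$ are both finite and nonzero, cannot use a single type of reactive element alone; this narrows the admissible topologies to those with two resistors together with one inductor and one capacitor (two of each would leave no resistive dissipation to support double real poles and zeros).

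For sufficiency, I would exhibit an explicit four-element series-parallel network realizing (1) when $p = 3z$. A natural candidate is a simple series-parallel combination of two resistors with one inductor and one capacitor (or, in the mechanical analogue, two dampers with a spring and an inerter); matching the resulting biquadratic to $k(s+z)^2/(s+3z)^2$ yields a small system in the element values that admits a positive solution, which certifies realizability. The companion case $p = z/3$ then follows immediately from the principle of frequency-inverse duality of Section~3, under which $p \leftrightarrow z^{-1}$ and $z \leftrightarrow p^{-1}$ maps $p = 3z$ to $p = z/3$, and the class of series-parallel networks is closed under this operation.

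For necessity, I would enumerate, up to equivalence, the finitely many four-element series-parallel topologies with two resistors, one inductor, and one capacitor, and for each compute the impedance as a ratio of quadratic polynomials whose coefficients are polynomials in the element values. Imposing the double-zero condition $B^2 = 4AC$ together with the double-pole condition $E^2 = 4DF$ then yields, after elimination of the element values, a polynomial relation between $p$, $z$, and $k$. The claim to verify is that the only solutions with $k, z, p > 0$ and $p \neq z$ are $p = 3z$ and $p = z/3$. Several topologies can be discarded quickly by checking that their $Z(0)$ or $Z(\infty)$ fails to be of the required form, and the principle of duality from Section~3 roughly halves the remaining casework by pairing each candidate topology with its dual.

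The main obstacle will be the systematic enumeration in the necessity step: although the number of four-element series-parallel topologies is modest, each requires a short but delicate elimination to confirm that the simultaneous double-root constraints force $p/z \in \{3, 1/3\}$. The key algebraic insight I expect to use is that, with only two reactive elements, the two natural frequencies contributing to the pole (and likewise to the zero) are generically distinct, so demanding that each collapse to a double root imposes a rigid constraint that, together with the fixed quotient $Z(\infty)/Z(0) = p^2/z^2$, singles out precisely the factor of three.
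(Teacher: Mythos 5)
Your opening paragraph is, in essence, the paper's entire proof: the paper simply substitutes $A = kx$, $B = 2kzx$, $C = kz^2x$, $D = x$, $E = 2px$, $F = p^2x$ into the four-element realizability classification of a general biquadratic in \cite{WCH14} and reads off $p = 3z$ or $p = z/3$, noting only that any four-element network is automatically series-parallel. Everything after that in your proposal is a plan to re-derive that classification from scratch. That is a legitimate, more self-contained route, but it is exactly the content the paper outsources, and it is where the two genuine problems in your write-up sit.

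First, your symmetry argument for passing from $p = 3z$ to $p = z/3$ invokes the wrong map. Under frequency-inverse duality ($p \leftrightarrow z^{-1}$, $z \leftrightarrow p^{-1}$) the condition $p = 3z$ becomes $z^{-1} = 3p^{-1}$, i.e. $p = 3z$ again: the condition is fixed by that substitution, not carried to $p = z/3$. What you need is the principle of duality ($p \leftrightarrow z$) or of frequency inversion ($p \leftrightarrow p^{-1}$, $z \leftrightarrow z^{-1}$), either of which does send $p = 3z$ to $p = z/3$. Second, your narrowing of the admissible topologies to two resistors plus one inductor and one capacitor does not follow from the observations you give: the finiteness and nonvanishing of $Z(0)$ and $Z(\infty)$, together with the exclusion of pure reactances, rule out configurations using a single reactive type, but not the one-resistor, three-reactive topologies (e.g. one resistor with two inductors and one capacitor), since a degree-two impedance can in principle be realized non-minimally by a network with three reactive elements. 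These cases must either be enumerated and dismissed in your necessity step or excluded by an explicit minimality argument; the classification in \cite{WCH14} that the paper cites already covers them. The remainder of your plan --- imposing $B^2 = 4AC$ and $E^2 = 4DF$ topology by topology and eliminating the element values --- is sound if laborious, and the explicit sufficiency construction at $p = 3z$ is unproblematic.
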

\begin{proof}
Let $A = kx$, $B = 2kzx$, $C = k z^2x$, $D = x$, $E = 2px$, and $F = p^2x$ for  $x > 0$. This condition can be derived from the realizability conditions of a general biquadratic impedance in the form of (2) where $A$, $B$, $C$, $D$, $E$, $F$ $> 0$ as a four-element network in \cite{WCH14}, where it is obvious that any four-element network must be series-parallel.
\end{proof}

\begin{theorem}  \label{theorem: five elements}
{A biquadratic impedance $Z(s)$ in the form of (1), where $k$, $z$, $p$ $> 0$ and $p \neq z$, is realizable as a five-element series-parallel network if and only if $p/z \in (1/3, 3)$,   $p = (2 + \sqrt{2})z$, or $p = z/(2 + \sqrt{2})$.}
\end{theorem}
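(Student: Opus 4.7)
The plan is to reduce the problem to the known classification of five-element series-parallel realisations of a general biquadratic impedance~(2), exactly in the spirit of the proofs of Theorems~\ref{theorem: fewer than four elements} and \ref{theorem: four elements}. I would apply the substitution $A = kx$, $B = 2kzx$, $C = kz^2x$, $D = x$, $E = 2px$, $F = p^2x$ for an arbitrary $x>0$, which embeds every $Z(s)$ of the form~(1) as an instance of~(2) with all coefficients positive, and then invoke from \cite{WCH14} the necessary and sufficient conditions for~(2) to be realisable as a five-element series-parallel $RLC$ network. These conditions are a finite disjunction of polynomial (in)equalities in $A,\dots,F$, one clause per admissible topology class.

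Substituting the parameterisation into each clause and cancelling the common positive factors $x$ and $k$ (which only rescale element values), every condition reduces to a polynomial (in)equality in $p$ and $z$ alone. Introducing the single ratio $\rho = p/z$ (with $\rho > 0$, $\rho \neq 1$), each clause becomes a condition on $\rho$, and the realisability set is the union of the resulting loci. The target equality is $(1/3,\,3)\cup\{2+\sqrt{2},\,1/(2+\sqrt{2})\}$. I expect the open interval to come from a topology whose clause is a strict polynomial inequality in $\rho$ that degenerates to equality precisely at $\rho = 1/3$ and $\rho = 3$ (the four-element boundary captured by Theorem~\ref{theorem: four elements}), while the two isolated values will emerge as the positive roots of a quadratic of the form $\rho^2 - 4\rho + 2 = 0$ (and its reciprocal) arising from a discriminant-style equality in another 5-element topology.

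As a consistency check, the principle of duality from Section~3 forces the final realisability set to be invariant under $\rho \leftrightarrow 1/\rho$; both $(1/3,\,3)$ and the pair $\{2+\sqrt{2},\,1/(2+\sqrt{2})\}$ manifestly have this symmetry, which would also let me halve the case analysis by treating $\rho > 1$ first and transporting the result to $\rho < 1$.

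The main obstacle is the bookkeeping. The catalogue of five-element series-parallel topologies in \cite{WCH14} is long, and for each clause one must carefully specialise, simplify, and decide whether the resulting locus contributes an open subinterval of $(0,\infty)$, an isolated algebraic number, or nothing at all. In particular, one must verify that the root $2-\sqrt{2} \approx 0.586$ is absorbed into $(1/3,\,3)$ rather than counted separately, that the reciprocal $1/(2-\sqrt{2}) = 1 + \sqrt{2}/2 \approx 1.707$ likewise lies inside $(1/3,\,3)$, and that no spurious additional intervals or algebraic numbers slip through the reduction.
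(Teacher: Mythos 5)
Your approach is essentially the paper's own proof: the same substitution $A = kx$, $B = 2kzx$, $C = kz^2x$, $D = x$, $E = 2px$, $F = p^2x$, followed by specialization of the known necessary and sufficient conditions for five-element series-parallel realizability of a general biquadratic to conditions on the ratio $p/z$. The one correction needed is the source of the five-element catalogue: it is \cite{JS11}, not \cite{WCH14} --- the latter treats only networks with at most four elements, so the clauses you plan to specialize must be taken from the Jiang--Smith five-element classification.
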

\begin{proof}
Let $A = kx$, $B = 2kzx$, $C = k z^2x$, $D = x$, $E = 2px$, and $F = p^2x$ for  $x > 0$. This condition can be derived from the realizability conditions of a general biquadratic impedance in the form of (2) where $A$, $B$, $C$, $D$, $E$, $F$ $> 0$ as a five-element series-parallel network in \cite{JS11}.
\end{proof}

\section{Some Basic Lemmas}

\begin{lemma}
\label{lemma: biquadratic impedances Z2 three elements}
{Consider a biquadratic impedance $F(s)$ in the form of
\begin{equation} \label{eq: biquadratic impedances Z2}
F(s) =  \frac{\alpha s^2 + \beta s + \gamma}{(s + p)^2},
\end{equation}
where $\alpha$, $\beta$, $\gamma$ $\geq 0$, and $p > 0$.
Then, $F(s)$ is realizable as a three-element series-parallel network if and only if at least one of the following conditions holds: 1. $\alpha \gamma = 0$; 2. $\beta = 0$ and $\alpha p^2 - \gamma  = 0$; 3. $\gamma = 0$ and $\alpha p - 2 \beta = 0$; 4. $\alpha = 0$ and $2 \beta p - \gamma = 0$; 5. $\alpha p^2 - \beta p + \gamma = 0$.
}
\end{lemma}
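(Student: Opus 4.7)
The plan is to prove this biconditional by exploiting the fact that a three-element series-parallel network admits only finitely many topologies, reducing the whole argument to a finite case check. Up to relabeling the element positions, there are eight structural patterns for three elements: all three in series, all three in parallel, and six mixed forms in which one element is combined in series or parallel with the series or parallel combination of the other two. Together with the finitely many ways to assign element types from $\{R, L, C\}$ to the three positions, this yields a short list of networks to consider.

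For sufficiency, I would exhibit an explicit realization for each of the five conditions. The denominator $(s+p)^{2}$ demands a double real pole, which in an RLC topology with three distinct element types is achievable only when a characteristic quadratic of the form $LCs^{2}+RCs+1$ has a repeated root, giving the constraint $R^{2}C=4L$ with $p=R/(2L)$. Under this constraint I would verify that $R$ in parallel with the series combination of $L$ and $C$ yields the numerator form of condition~2; switching the outer element to $L$ (respectively $C$) and the inner elements accordingly produces realizations for conditions~3 and~4. The parallel trio $R \parallel L \parallel C$ with $L=4R^{2}C$ realizes the sub-case $\alpha=\gamma=0$ of condition~1. Condition~5 is precisely the statement that $(s+p)$ divides $\alpha s^{2}+\beta s+\gamma$, so after cancellation $F$ reduces to a first-order positive-real function, which admits a three-element realization such as $R_{2}$ in parallel with the series combination of $R_{1}$ and $L$ (or its \textit{RC} dual), for which I would check the coefficient match directly.

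For necessity, I would compute the impedance of every candidate three-element network symbolically, impose equality with $F(s)$, and extract the resulting algebraic constraints on the coefficients. Networks with two resistors plus one reactive element, or with one resistor plus two reactive elements of the same type, produce first-order impedances, so matching the biquadratic $F$ forces a pole-zero cancellation, which is exactly condition~5. Purely reactive or purely resistive networks are incompatible with the non-degenerate biquadratic form. Among RLC configurations in which all three element types appear, the all-series and all-parallel placements, together with those whose denominator is first-order or has imaginary-axis poles or a pole at the origin, can only match $F$ via the same cancellation. The only four topologies able to produce a genuine double real pole are the ones singled out in the sufficiency step, and under the constraint $R^{2}C=4L$ the numerators collapse to exactly the families described by conditions~1, 2, 3, and~4.

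The principal obstacle will be the careful bookkeeping: one must enumerate the topologies and type assignments exhaustively, match each resulting impedance against $F(s)$, and verify that every realizable case maps onto one of the five conditions without overlap or gap. A secondary subtlety is checking, for condition~5, that the reduced first-order positive-real function actually admits a genuine three-element series-parallel realization — not merely a two-element one — across all parameter regimes, which I would address by writing down such a realization explicitly and confirming that its free parameters cover the post-cancellation family.
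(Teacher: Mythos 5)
Your overall strategy coincides with the paper's: note that $F$ is not a reactance function, so at least one element is a resistor and at most two are reactive, and then run through the finitely many three-element series--parallel topologies and type assignments. The only organizational difference is that the paper dispatches the case of at most one reactive element abstractly (the degree of the impedance is then at most one, forcing a common factor between numerator and denominator, i.e.\ Condition~5) and compresses the two-reactive case into the phrase ``method of enumeration,'' whereas you actually carry the enumeration out. Your identification of the genuinely second-order cases is correct: the four networks $R\parallel L\parallel C$, $R\parallel(L+C)$, $L\parallel(R+C)$, $C\parallel(R+L)$ under the repeated-root constraint yield exactly the families $\alpha=\gamma=0$ and Conditions~2, 3, 4, and every other admissible topology either has a pole at the origin, at infinity, or on the imaginary axis (hence cannot match $F$ at all) or forces the cancellation of Condition~5. (One sentence of your necessity paragraph groups the ``all-parallel placement'' with the topologies that cannot produce a double real pole and then counts $R\parallel L\parallel C$ among the four that can; the latter is the correct statement.)

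The one genuine gap concerns Condition~1. As printed it reads $\alpha\gamma=0$, i.e.\ $\alpha=0$ \emph{or} $\gamma=0$, but your sufficiency step exhibits a realization only for the sub-case $\alpha=\gamma=0$, and no realization exists in general for the remaining sub-cases: with $\alpha=0$, $\beta=1$, $\gamma=5$, $p=1$ the function $F(s)=(s+5)/(s+1)^2$ is not even positive real, while with $\alpha=0$, $\beta=1$, $\gamma=3/2$, $p=1$ it is positive real but, by your own enumeration (and by Condition~1 of Lemma~\ref{lemma: biquadratic impedances Z2 Four elements}, which covers precisely $\alpha=0$, $0<\gamma<2\beta p$), it requires four elements. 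So the ``if'' direction is false under the literal reading of Condition~1; the condition must be read as $\alpha=\gamma=0$, which is what your enumeration actually establishes and is the reading consistent with how the lemma is used downstream. You should state that emendation explicitly rather than silently proving the sub-case. With that correction, and with the promised coefficient checks written out (your formulas $R^2C=4L$ for the two-element branches and $L=4R^2C$ for the parallel trio are both right for their respective topologies), the argument is complete.
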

\begin{proof}
Since it is obvious that $F(s)$ in the form of \eqref{eq: biquadratic impedances Z2} is not a \textit{reactance function}  \cite[Definition~3.1]{Bah84}, there is at least one resistor, which means that there are at most two reactive elements.

When the number of reactive elements is at most one, the  \textit{degree} \cite[Section~3.6]{AV73} of $F(s)$ cannot exceed one by \cite[Theorem~4.4.3]{AV73}.
Therefore, there must exist at least one common factor between $(\alpha s^2 + \beta s + \gamma)$ and $(s + p)^2$, which holds if and only if
Condition~5 is satisfied.

When the number of reactive elements is two, there is one resistor. Based on the method of enumeration, one can obtain
Conditions~1--4.
\end{proof}

\begin{lemma}
\label{lemma: biquadratic impedances Z2 Four elements}
{Consider a biquadratic impedance $F(s)$ in the form of
\eqref{eq: biquadratic impedances Z2},
where $\alpha$, $\beta$, $\gamma$ $\geq 0$, $p > 0$, assuming that the condition of Lemma~\ref{lemma: biquadratic impedances Z2 three elements} does not hold. Then, $F(s)$ is realizable as a four-element series-parallel network if and only if at least one of the six conditions holds:
1. $\alpha = 0$ and $\gamma < 2 \beta p$; 2. $\gamma = 0$ and $\alpha p < 2 \beta$; 3. $\alpha$, $\beta$, $\gamma$ $> 0$ and $\alpha p^2 - \gamma = 0$; 4. $\alpha$, $\beta$, $\gamma$ $> 0$, $\alpha p^2 < \gamma$, and
  $3 \alpha p^2 + \gamma - 2\beta p = 0$ or $\beta^2 p^2 + \gamma^2 - \alpha  \gamma p^2 - 2 \beta \gamma p = 0$ holds; 5. $\alpha$, $\beta$, $\gamma$ $> 0$, $\alpha p^2 > \gamma$, and  $\alpha p^2 + 3 \gamma - 2\beta p = 0$
  or $\alpha^2 p^2 + \beta^2   - 2 \alpha \beta p - \alpha \gamma  = 0$ holds; 6. $\alpha$, $\beta$, $\gamma$ $> 0$ and $\alpha^2 p^4 - 2\alpha \beta p^3 + 6 \alpha \gamma p^2   - 2 \beta \gamma p + \gamma^2 = 0$.
Moreover, if one of the above six conditions holds, then $F(s)$ is realizable as a two-reactive four-element series-parallel network.
}
\end{lemma}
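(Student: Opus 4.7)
The plan is to enumerate the topologically distinct two-terminal series-parallel networks on four elements, compute the port impedance of each, and translate the requirement that this impedance equal $F(s)$ with strictly positive element values into polynomial conditions on $(\alpha,\beta,\gamma,p)$. The union of these conditions, after absorbing degenerations already covered by Lemma~\ref{lemma: biquadratic impedances Z2 three elements}, should match the stated list of six conditions, while the constructions provided by the enumeration will automatically witness the \emph{moreover} clause.

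First, I would reduce to realizations with exactly two reactive elements. Under the standing hypothesis, Conditions~1--5 of Lemma~\ref{lemma: biquadratic impedances Z2 three elements} all fail; in particular $\alpha p^2 - \beta p + \gamma \neq 0$, so $\alpha s^2 + \beta s + \gamma$ and $(s+p)^2$ are coprime and the McMillan degree of $F(s)$ equals $2$. By \cite[Theorem~4.4.3]{AV73} any realization requires at least two reactive elements, and since $F(s)$ is not a reactance function at least one resistor must be present. A minimal four-element series-parallel realization with three reactive elements would, on any of the finitely many series-parallel binary-tree topologies on four leaves, either contain two same-type reactive elements as siblings (which combine in series or parallel, contradicting minimality) or produce an impedance of McMillan degree three, contradicting coprimality. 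Hence a four-element realization has exactly two resistors and two reactive elements, which settles the \emph{moreover} clause.

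Second, I would enumerate the relevant two-reactive topologies. The reactive pair is $LL$, $CC$, or $LC$, and the series-parallel tree on four labeled leaves has a short fixed list of shapes. The duality and frequency-inversion principles from Section~3 act on the coefficients of $F(s)$ by the substitution $(\alpha,\beta,\gamma,p)\mapsto(\gamma,\beta p,\alpha p^2,1/p)$ up to a positive rescaling; this sends the regime $\alpha p^2<\gamma$ to $\alpha p^2>\gamma$, exchanges Conditions~1 and~2 and the two parts of Conditions~4 and~5 pairwise, and fixes Conditions~3 and~6. I would use this symmetry to cut the enumeration in half. For each representative topology I would then write the port impedance as a rational function whose coefficients are explicit in the element values, match it with $(\alpha s^2 + \beta s + \gamma)/(s+p)^2$ up to a common positive scalar, and solve the resulting over-determined system of five equations in four element values plus one scaling. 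Positive solvability yields a single polynomial equality per topology (together with a strict inequality in the $\alpha=0$ or $\gamma=0$ branches), and discarding relations already implied by Lemma~\ref{lemma: biquadratic impedances Z2 three elements} recovers the six listed conditions.

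The main obstacle will be the bookkeeping in the enumeration and the positivity analysis: each topology produces an over-determined coefficient-matching system whose extraneous or sign-indefinite roots must be pruned, and the partition into the three sub-regimes $\alpha p^2=\gamma$, $\alpha p^2<\gamma$, $\alpha p^2>\gamma$ together with the two algebraic alternatives inside Conditions~4 and~5 must be matched one-to-one with specific topologies without omission or duplication. The frequency-inversion symmetry above reduces the workload but does not eliminate the need to carry out the coefficient-matching calculations in each representative topology, which is where the bulk of the effort lies.
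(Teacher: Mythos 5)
Your overall strategy (enumerate the two-reactive four-element series-parallel topologies and match coefficients) is the honest underlying route to this classification, but it is not what the paper does, and as written your proposal stops exactly where the content of the lemma begins. The paper's proof is a short specialization of known results: for $\alpha=0$ or $\gamma=0$ it combines the positive-realness test $(\sqrt{AF}-\sqrt{CD})^2\le BE$ with \cite[Lemma~8]{JS11} (any positive-real biquadratic with a zero coefficient is realizable as a two-reactive four-element series-parallel network), which together with Lemma~1 immediately yields Conditions~1 and~2; for $\alpha,\beta,\gamma>0$ it substitutes $A=\alpha x$, $B=\beta x$, $C=\gamma x$, $D=x$, $E=2px$, $F=p^2x$ into \cite[Theorem~5]{WCH14} to obtain Conditions~3--6; and the \emph{moreover} clause is read off from the covering configurations in \cite[Figs.~4--6]{WCH14}, which are all two-reactive. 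Your plan amounts to re-proving \cite[Theorem~5]{WCH14} from scratch, and the step that actually produces the six specific polynomial conditions --- the coefficient-matching and positivity analysis for each topology --- is explicitly deferred to ``the bulk of the effort.'' That is the essential gap: nothing in the proposal verifies that the enumeration lands on precisely this list.

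Two concrete errors in the outline would also bite if you carried it out. First, the symmetry you intend to use to halve the work is misstated: frequency inversion sends $F(s)$ to $F(1/s)=\bigl(\gamma s^2+\beta s+\alpha\bigr)/\bigl(p^2(s+1/p)^2\bigr)$, so the induced map is $(\alpha,\beta,\gamma,p)\mapsto(\gamma,\beta,\alpha,1/p)$ up to an overall positive factor, not $(\gamma,\beta p,\alpha p^2,1/p)$; with your substitution Condition~1 does not map to Condition~2. Moreover, duality proper ($a_k\leftrightarrow b_k$) does not preserve the class of functions with denominator $(s+p)^2$, so only Inv is available here. Second, your dichotomy for excluding three-reactive four-element realizations (``same-type siblings or McMillan degree three'') is not exhaustive: take $L_3$ in series with the parallel connection of $L_2$ and the series pair $L_1,R$. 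No two same-type leaves are siblings, and the impedance has McMillan degree two, yet the network is excluded only because its impedance has a pole at infinity, which $F(s)$ does not. The conclusion survives, but not by the argument you give; the paper sidesteps this issue entirely because the cited covering configurations are already two-reactive.
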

\begin{proof}
Let $A = \alpha x$, $B = \beta x$, $C = \gamma x$, $D = x$, $E = 2px$, and $F = p^2x$ for any $x > 0$.
A necessary and sufficient condition for $Z(s)$ in the form of (2) with $A$, $B$, $C$, $D$, $E$, $F$ $\geq 0$ to be positive real is $(\sqrt{AF} - \sqrt{CD})^2 \leq BE$ \cite{CS09(2),Fos62}.
Since any positive-real biquadratic impedance with zero coeffients is realizable as a two-reactive four-element series-parallel network \cite[Lemma~8]{JS11}, one obtains Conditions~1 and 2 together with Lemma~\ref{lemma: biquadratic impedances Z2 three elements}.
Now, it remains to considering the case of $\alpha$, $\beta$, $\gamma$ $> 0$. By \cite[Theorem~5]{WCH14}, one  obtains Conditions~3--6.

By the covering configurations in \cite[Figs.~4--6]{WCH14}, the number of reactive elements for realizations is two.
\end{proof}

\begin{lemma}
\label{lemma: biquadratic impedances Z2 Five elements Two reactive}
{Consider a biquadratic impedance $F(s)$ in the form of
\eqref{eq: biquadratic impedances Z2},
where $\alpha$, $\beta$, $\gamma$, $p > 0$, and neither the condition of Lemmas~\ref{lemma: biquadratic impedances Z2 three elements} nor the condition of Lemma~\ref{lemma: biquadratic impedances Z2 Four elements} holds. Then, $F(s)$ is realizable as a two-reactive five-element  series-parallel network if and only if at least one of the following conditions holds: 1. $\alpha p^2 > \gamma$ and $\alpha p^2 + 3\gamma - 2\beta p  < 0$; 2. $\alpha p^2 > \gamma$ and $\alpha^2 p^2 + \beta^2 - 2\alpha \beta p  - \alpha \gamma < 0$; 3. $\alpha p^2 < \gamma$ and $3 \alpha p^2 + \gamma - 2 \beta p  < 0$; 4. $\alpha p^2 < \gamma$ and $\beta^2 p^2 + \gamma^2 - \alpha  \gamma p^2 - 2 \beta \gamma p < 0$.
}
\end{lemma}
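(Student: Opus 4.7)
The plan is to transport the problem to the general biquadratic form (2) via the parametrization $A = \alpha x$, $B = \beta x$, $C = \gamma x$, $D = x$, $E = 2px$, $F = p^2 x$ (for any $x > 0$), exactly as in the proofs of Lemmas \ref{lemma: biquadratic impedances Z2 three elements} and \ref{lemma: biquadratic impedances Z2 Four elements}, and then to invoke the complete classification of two-reactive five-element series-parallel realizations of a general positive-real biquadratic impedance established in \cite{JS11}. By the hypothesis that neither Lemma \ref{lemma: biquadratic impedances Z2 three elements} nor Lemma \ref{lemma: biquadratic impedances Z2 Four elements} applies, we are in the regime $\alpha, \beta, \gamma > 0$ with none of the corresponding equality conditions satisfied; in particular $\alpha p^2 \neq \gamma$, so the dichotomy $\alpha p^2 > \gamma$ versus $\alpha p^2 < \gamma$ is forced.

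Next I would work through the finite list of covering configurations in \cite{JS11} for two-reactive five-element series-parallel networks (with the case count cut in half by the duality and frequency-inversion principles recorded in Section 3). For each configuration, \cite{JS11} supplies an explicit set of polynomial realizability inequalities in $(A, B, C, D, E, F)$. After substituting the parametrization, which enforces $E^{2} = 4DF$, those inequalities collapse to polynomial conditions in $(\alpha, \beta, \gamma, p)$ alone. A strong guide for what should emerge is the structure of Lemma \ref{lemma: biquadratic impedances Z2 Four elements}: its four-element conditions 4 and 5 are exactly the equality versions $3\alpha p^{2} + \gamma - 2\beta p = 0$, $\beta^{2} p^{2} + \gamma^{2} - \alpha \gamma p^{2} - 2\beta \gamma p = 0$ (valid on $\alpha p^{2} < \gamma$) and $\alpha p^{2} + 3\gamma - 2\beta p = 0$, $\alpha^{2} p^{2} + \beta^{2} - 2\alpha\beta p - \alpha\gamma = 0$ (valid on $\alpha p^{2} > \gamma$). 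I would confirm that each such equality is promoted to a strict $<$ when the corresponding topology is augmented by a single extra element, and that together these four strict inequalities exhaust the two-reactive five-element series-parallel realizations. Sufficiency is established by exhibiting, in each of Cases 1--4, an explicit five-element topology drawn from \cite{JS11}; necessity is settled by exhausting the classification.

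The main obstacle is bookkeeping: ensuring that the enumeration of two-reactive five-element topologies is both exhaustive and non-redundant after the specialization $E^{2} = 4DF$. Two delicate points require attention. First, the equality $E^{2} = 4DF$ can render generically necessary positive-real inequalities degenerate, so some configurations that are realizable for arbitrary biquadratics may fail to produce any new region beyond those already carved out by Lemmas \ref{lemma: biquadratic impedances Z2 three elements} and \ref{lemma: biquadratic impedances Z2 Four elements}, and must be discarded. Second, the frequency-inverse duality (Section 3) maps Conditions 1 and 3 to each other and Conditions 2 and 4 to each other under the involution interchanging $\alpha p^{2}$ with $\gamma$, so the enumeration must be carried out on one half of the dichotomy only and then dualized, without accidentally double-counting topologies that are self-dual.
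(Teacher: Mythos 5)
Your proposal follows essentially the same route as the paper: the paper's proof simply substitutes $A = \alpha x$, $B = \beta x$, $C = \gamma x$, $D = x$, $E = 2px$, $F = p^2 x$ into the necessary and sufficient condition of \cite[Theorem~1]{JS11} for two-reactive five-element series-parallel realizability and combines it with the hypothesis that Lemmas~\ref{lemma: biquadratic impedances Z2 three elements} and \ref{lemma: biquadratic impedances Z2 Four elements} do not apply. Your additional remarks on configuration enumeration and duality are consistent with how \cite{JS11} is organized, but the paper treats all of that as already packaged in the cited theorem.
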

\begin{proof}
A necessary and sufficient condition for $Z(s)$ in the form of (2) where $A$, $B$, $C$, $D$, $E$, $F$ $> 0$ to be realizable as a two-reactive five-element series-parallel network is presented  in \cite[Theorem~1]{JS11}.
Letting $A = \alpha x$, $B = \beta x$, $C = \gamma x$, $D = x$, $E = 2px$, and $F = p^2x$ for any $x > 0$, this lemma can be proved together with the assumption that neither the condition of Lemma~\ref{lemma: biquadratic impedances Z2 three elements} nor the condition of Lemma~ does not hold.
\end{proof}

\section{Supplementary Lemmas of Three-Reactive Seven-Element Series-Parallel Realizations  for the Proof of Lemma~2}

\begin{lemma}
\label{lemma: Three-Reactive-Structure}
{Consider a biquadratic impedance $Z(s)$ in the form of \eqref{eq: general biquadratic impedances}
with $A$, $B$, $C$, $D$, $E$, $F$ $> 0$ that cannot be realized as a series-parallel network containing fewer than seven elements. If $Z(s)$ is realizable as a three-reactive seven-element series-parallel network as shown in Fig.~2(a),
where $N_1$ is a three-element series-parallel network
and $N_2$ is a four-element series-parallel network, then $Z(s)$ is also realizable as shown  in Fig.~\ref{fig: Equivalent structure}, where $N_b$ is a two-reactive five-element series-parallel network.  }
\end{lemma}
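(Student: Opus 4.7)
The plan is a finite case analysis on (a) how $N_1$ and $N_2$ are connected in Fig.~2(a) and (b) how the three reactive elements are distributed between them, invoking earlier lemmas to recognize the admissible normal forms of each sub-network.

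First I would eliminate degenerate distributions. If $N_1$ or $N_2$ is purely resistive, it collapses to a single resistor and the total element count drops below seven, contradicting the hypothesis that $Z(s)$ is not realizable with fewer than seven elements. The same reasoning, applied to any two same-kind elements placed directly in series or in parallel inside $N_1$ or $N_2$, rules out further topologies whose internal reductions would destroy minimality. What remains are two reactive splits, $(r_1,r_2)=(1,2)$ and $(r_1,r_2)=(2,1)$, each with only a short list of admissible series-parallel topologies for the three-element block and the four-element block.

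For each remaining configuration I would identify the driving-point impedance of $N_1$ via Lemma~\ref{lemma: biquadratic impedances Z2 three elements} (the three-element normal forms of $F(s)$) and the structural shape of $N_2$ via Lemma~\ref{lemma: biquadratic impedances Z2 Four elements} (the four-element normal forms), then regroup the seven elements along the series/parallel seam joining $N_1$ and $N_2$. The aim is to pull exactly one reactive and one resistor to the outer level and to let the remaining two reactives and three resistors assemble into a sub-block $N_b$ of the shape demanded by Fig.~\ref{fig: Equivalent structure}. For each such regrouping I would verify three things: that the overall impedance is preserved, that $N_b$ is series-parallel with exactly two reactive and five total elements, and that $N_b$ is not itself further reducible, since an unnoticed reduction would again contradict the seven-element minimality hypothesis. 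The principle of duality developed in Section~3 halves the work by pairing each series-seam case with its parallel-seam dual.

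The main obstacle I anticipate is the $(r_1,r_2)=(1,2)$ case: the lone reactive in $N_1$ must be combined with one of the two reactives of $N_2$ across the connecting seam without breaking the series-parallel property, and for every admissible pairing one has to check that the resulting five-element $N_b$ is irreducible and matches one of the five-element forms allowed by Lemma~\ref{lemma: biquadratic impedances Z2 Five elements Two reactive}. Once one representative per equivalence class under duality and the frequency inversion described in Section~3 has been treated in this manner, the remaining cases follow by symmetry and the lemma is established.
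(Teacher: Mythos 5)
There is a genuine gap. The crux of the paper's argument is the degree bound of \cite[Theorem~4.4.3]{AV73}: a sub-network containing $k$ reactive elements has a driving-point impedance of degree at most $k$. This is used twice, and your proposal has neither use. First, it disposes of the $(r_1,r_2)=(2,1)$ split outright: if $N_2$ is a one-reactive four-element network, its impedance has degree at most one and (by the discussion in \cite{WCH14}) is realizable with at most three elements, so the whole realization collapses below seven elements, contradicting the hypothesis. You instead keep $(2,1)$ as a live case and propose to ``regroup'' it into the form of Fig.~\ref{fig: Equivalent structure}; but that target requires one reactive element in the outer parallel $RC$ (or dual) block and two inside $N_b$, and no impedance-preserving regrouping can extract a reactive element from inside $N_2$ and move it to the outer level. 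The case must be killed by contradiction, not transformed. Second, the same degree bound pins down the surviving $(1,2)$ case: the one-reactive three-element $N_1$ has degree at most one and is therefore equivalent to one of the two configurations of Fig.~\ref{fig: configurations Lemma1}, namely a resistor $R_2$ in series with a parallel $R_1$--$C_1$ (or $R_1$--$L_1$) pair. The entire construction of $N_b$ is then a one-line regrouping: $N_b$ is the series connection of $R_2$ with $N_2$, which has exactly two reactive and five total elements by construction. Your description of this case --- combining the lone reactive of $N_1$ with a reactive of $N_2$ inside $N_b$ --- is structurally wrong, since it would leave the outer block with no reactive element at all.

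Two further points. Lemmas~\ref{lemma: biquadratic impedances Z2 three elements}--\ref{lemma: biquadratic impedances Z2 Five elements Two reactive} are realizability criteria for the specific rational function $(\alpha s^2+\beta s+\gamma)/(s+p)^2$; they are not catalogues of structural normal forms for arbitrary three- and four-element sub-networks, so they cannot play the role you assign them here (they are used later, in the proof of Lemma~2 of the main paper, after this structural lemma has already been established). Also, to rule out an all-reactive $N_1$ you need \cite[Lemma~2]{WCH14} (a biquadratic with all coefficients positive is not realizable as a series connection with a purely reactive part); the purely resistive collapse you describe handles only the zero-reactive end of the spectrum.
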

\begin{figure}[thpb]
      \centering
      \subfigure[]{
      \includegraphics[scale=1.15]{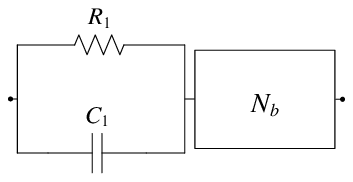}
      \label{fig: Equivalent structure a}}
      \subfigure[]{
      \includegraphics[scale=1.15]{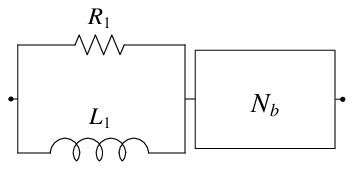}
      \label{fig: Equivalent structure b}}
      \caption{A class of three-reactive seven-element series-parallel networks, where $N_b$ is a two-reactive five-element series-parallel network (Fig.~7).}
      \label{fig: Equivalent structure}
\end{figure}
\begin{proof}
By \cite[Lemma~2]{WCH14}, $Z(s)$ cannot be realized as the series connection of two networks, one of which only contains  reactive elements.
Therefore, $N_1$ must contain one or two reactive elements.
If $N_1$ contains two reactive elements, then $N_2$  contains one reactive element. Therefore,    the  \textit{degree} \cite[Section~3.6]{AV73} of $Z_2(s)$ cannot exceed one by \cite[Theorem~4.4.3]{AV73},  where $Z_2(s)$ denotes the impedance of $N_2$.
By the discussion in \cite{WCH14}, $Z_2(s)$ is realizable as a one-reactive three-element series-parallel network, which implies that $Z(s)$ is realizable with a series-parallel network containing fewer than seven elements. This  contradicts the assumption.
If $N_1$ contains one reactive element, then the  \textit{degree} \cite[Section~3.6]{AV73} of $Z_1(s)$ cannot exceed one by \cite[Theorem~4.4.3]{AV73}, where $Z_1(s)$ denotes the impedance of $N_1$. By the discussion in \cite{WCH14}, $Z_1(s)$ is realizable as a one-reactive three-element series-parallel network, which is equivalent to one of configurations  in Fig.~\ref{fig: configurations Lemma1}. Regarding the series connection of $R_2$ and $N_2$ as $N_b$, this lemma is proved.
\end{proof}

\begin{figure}[thpb]
      \centering
      \subfigure[]{
      \includegraphics[scale=1.15]{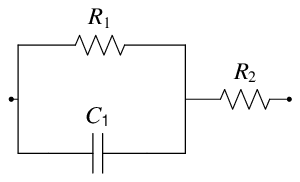}
      \label{fig: configurations Lemma1 a}}
      \subfigure[]{
      \includegraphics[scale=1.15]{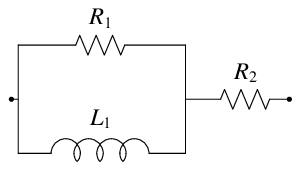}
      \label{fig: configurations Lemma1 b}}
      \caption{Configurations for $N_1$ mentioned in the proof of
      Lemma~\ref{lemma: Three-Reactive-Structure}.}
      \label{fig: configurations Lemma1}
\end{figure}

\section{Complete Proof of Lemma~2}

Assume that $Z(s)$ is realizable as in Fig.~\ref{fig: Equivalent structure}(a), where $N_b$ is a two-reactive five-element series-parallel network.
Let $Z(s) = Z_a(s) + Z_b(s)$, where $Z_a(s)$ is the impedance of the parallel connection of $R_1$ and $C_1$ and $Z_b(s)$ is the impedance of $N_b$. It is clear that $Z_a(s)$ can be written in the form of
\begin{equation*}
Z_a(s) = \frac{m}{s+p},
\end{equation*}
where $m > 0$.
Since the condition  of Theorem~1 does not hold, $N_b$ cannot be equivalent to a network containing fewer than five elements. Therefore, $Z_b(s)$ can be expressed in the form of
\begin{equation*}
Z_b(s) = \frac{\alpha s^2 + \beta s + \gamma}{(s+p)^2},
\end{equation*}
where $\alpha$, $\beta$, $\gamma$ $> 0$, and the condition of
Lemma~\ref{lemma: biquadratic impedances Z2 Five elements Two reactive}
holds. Since
\begin{equation*}
Z_a(s) + Z_b(s) = \frac{\alpha s^2 + (\beta + m) s + (\gamma + mp)}{(s+p)^2},
\end{equation*}
it follows that
$\alpha s^2 + (\beta + m) s + (\gamma + mp) = k(s+z)^2$.
Therefore,
\begin{equation}  \label{eq: N1 two N2 five 01}
\beta = 2 \alpha z - m, ~~~~~ \gamma = \alpha z^2 - mp.
\end{equation}
Since $\beta$, $\gamma$ $> 0$, it follows from \eqref{eq: N1 two N2 five 01} that
\begin{equation}  \label{eq: N1 two N2 five 02}
\alpha > \max\left\{ \frac{m}{2z}, \frac{mp}{z^2} \right\}.
\end{equation}

If $Z_b(s)$ satisfies Condition~1 of Lemma~\ref{lemma: biquadratic impedances Z2 Five elements Two reactive}, then $\alpha p^2 > \gamma$ and $\alpha p^2 + 3\gamma - 2\beta p  < 0$. Together with \eqref{eq: N1 two N2 five 01}, one obtains
\begin{align}
(p-z)(p+z) \alpha  + mp &> 0,  \label{eq: N1 two N2 five 03}  \\
(p-z)(p-3z) \alpha  - mp &< 0. \label{eq: N1 two N2 five 04}
\end{align}
If $z < p \leq 3z$, then it is obvious that the condition of Theorem~1 holds.
If $p > 3z$ or $p < z$, then it follows from \eqref{eq: N1 two N2 five 04} that $\alpha < mp/((p-z)(p-3z))$.
Together with \eqref{eq: N1 two N2 five 02}, one obtains
$mp/z^2 < mp/((p-z)(p-3z))$,
which implies $(2 - \sqrt{2})z < p < (2 + \sqrt{2})z$. Therefore,  the condition of Theorem~1  holds.

If $Z_b(s)$ satisfies Condition~2 of Lemma~\ref{lemma: biquadratic impedances Z2 Five elements Two reactive}, then $\alpha p^2 > \gamma$ and $\alpha^2 p^2 + \beta^2 - 2\alpha \beta p  - \alpha \gamma < 0$. Together with \eqref{eq: N1 two N2 five 01}, one obtains \eqref{eq: N1 two N2 five 03} and
\begin{equation}  \label{eq: N1 two N2 five 05}
(p-z)(p-3z)\alpha^2 + m(3p-4z)\alpha + m^2 < 0.
\end{equation}
It follows from \eqref{eq: N1 two N2 five 05} that $p < 3z$. If $z < p < 3z$,  then it is obvious that the condition of Theorem~1 holds. If $p < z$, then
\eqref{eq: N1 two N2 five 03} implies that
\begin{equation}   \label{eq: N1 two N2 five 06}
\alpha < -\frac{mp}{(p-z)(p+z)}.
\end{equation}
From \eqref{eq: N1 two N2 five 02} and \eqref{eq: N1 two N2 five 06}, it follows that $m/(2z) < -mp/((p-z)(p+z))$,
which implies $p > z/(1+\sqrt{2})$. Therefore, the condition of Theorem~1  holds.

If $Z_b(s)$ satisfies Condition~3 of Lemma~\ref{lemma: biquadratic impedances Z2 Five elements Two reactive}, then $\alpha p^2 < \gamma$ and $3 \alpha p^2 + \gamma - 2 \beta p  < 0$.
Together with \eqref{eq: N1 two N2 five 01}, one obtains
\begin{align}
(p-z)(p+z)\alpha + mp &< 0,  \label{eq: N1 two N2 five 07} \\
(3p-z)(p-z)\alpha + mp &< 0. \label{eq: N1 two N2 five 08}
\end{align}
It follows from \eqref{eq: N1 two N2 five 08} that
$z/3 < p < z$. Therefore, the condition of Theorem~1 holds.

If $Z_b(s)$ satisfies Condition~4 of Lemma~\ref{lemma: biquadratic impedances Z2 Five elements Two reactive}, then $\alpha p^2 < \gamma$ and $\beta^2 p^2 + \gamma^2 - \alpha  \gamma p^2 - 2 \beta \gamma p < 0$. Together with \eqref{eq: N1 two N2 five 01}, one obtains \eqref{eq: N1 two N2 five 07} and
\begin{equation} \label{eq: N1 two N2 five 09}
z^2 (3p - z)(p - z)\alpha^2 +  p^3 m \alpha < 0.
\end{equation}
It follows from \eqref{eq: N1 two N2 five 09} that $z/3 < p < z$. Therefore, the condition of Theorem~1 holds.

This means that $Z(s)$ cannot be realized as in Fig.~\ref{fig: Equivalent structure}(a), where $N_b$ is a two-reactive five-element series-parallel network.
It is clear that any network in Fig.~\ref{fig: Equivalent structure}(b)
can be a frequency inverse network of another one in
Fig.~\ref{fig: Equivalent structure}(a). Therefore, by the principle of frequency inverse (Section~III), $Z(s)$ cannot be realized as in
Fig.~\ref{fig: Equivalent structure}(b), where $N_b$ is a two-reactive five-element series-parallel network.

By Lemma~\ref{lemma: biquadratic impedances Z2 Five elements Two reactive}, $Z(s) \in \mathcal{Z}_{p2,z2}$ cannot be realized as a three-reactive seven-element series-parallel network as shown  in Fig.~2(a), where $N_1$
is a three-element series-parallel network and $N_2$ is a four-element series-parallel network. Since any network in Fig.~2(b), where $N_1$
is a three-element series-parallel network and $N_2$ is a four-element series-parallel network, can be a dual network of the case of Fig.~2(a), by the principle of duality (Section~III), this lemma can be proved.

\section{Supplementary Lemmas of Four-Reactive Seven-Element Series-Parallel Realizations for the Proof of  Lemma~3}

\begin{lemma}
\label{lemma: Possible configurations of N1}
{Consider the four-reactive  seven-element series-parallel network in  Fig.~2,  realizing a biquadratic impedance $Z(s)$ in the form of \eqref{eq: general biquadratic impedances} with $A$, $B$, $C$, $D$, $E$, $F$ $> 0$, where $N_1$ is a two-reactive three-element series-parallel network and $N_2$ is a two-reactive  four-element series-parallel network.
If $Z(s)$ cannot be realized as a series-parallel network containing fewer than seven elements, then $N_1$ must be
one of the configurations in Fig.~\ref{fig: Two-reactive configurations N1}.}
\end{lemma}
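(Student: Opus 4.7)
The plan is to enumerate every three-element series-parallel network containing two reactive elements and one resistor, and then to discard those configurations that would force a reduction of $N_1$, and hence of the whole seven-element network, below the stated minimality.

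First, I would list the four skeleton topologies available on three labelled elements: all three in series; all three in parallel; a series pair in parallel with the third element; or a parallel pair in series with the third element. Placing the unique resistor in each of the three available positions, and choosing the reactive pair to be $LL$, $CC$, or $LC$, produces a finite list of candidate shapes for $N_1$. I would also identify those shapes that coincide under the obvious symmetries (swapping the two elements of a series pair or of a parallel pair) so as to keep the list irredundant.

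Second, I would invoke the minimality hypothesis through the degree argument already used in the excerpt. Since $Z(s)$ cannot be realized with fewer than seven elements, $N_1$ cannot be equivalent to a network on fewer than three elements; by \cite[Theorem~4.4.3]{AV73} this forces the impedance $Z_1(s)$ of $N_1$ to have degree exactly two, matching its count of reactive elements. Any configuration in which the two same-type reactives sit in a pure series-series or parallel-parallel position collapses into a single equivalent reactive and must be rejected, so the all-series and all-parallel skeletons survive only in the mixed $LC$ case. For the two mixed topologies (series pair in parallel with the third, or parallel pair in series with the third), I would check, configuration by configuration, whether the resistor is placed so as to insulate the two reactives from direct recombination; only those survive.

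Third, I would use the way $N_1$ is connected to $N_2$ in Fig.~2, together with the fact that $N_2$ is itself two-reactive and four-element, to discard any remaining configuration in which the pole of $Z_1(s)$ would cancel with a zero of $Z_2(s)$ (or vice versa for the parallel configuration of Fig.~2), since any such cancellation would drop the overall element count below seven. The remaining configurations, after also identifying those related by graph duality as handled elsewhere, are exactly the ones drawn in the figure referenced in the statement. The main obstacle will be the bookkeeping of this enumeration: ensuring that no legitimate configuration is overlooked and no pair of configurations is listed twice under the interchange of the two reactive elements or under graph duality; once the degree-count consequence of the minimality hypothesis is in place, each individual rejection is mechanical.
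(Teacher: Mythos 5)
Your overall plan (enumerate the three-element, two-reactive series-parallel shapes and then prune) matches the paper's, but your pruning criteria are not the ones the paper uses and, as stated, they are insufficient. The paper's entire argument is to invoke \cite[Lemma~1]{WCH14}: for a biquadratic $Z(s)$ with all six coefficients positive, no realization may contain a cut-set separating the two terminals that consists of a single kind of reactive element; since any cut-set of $N_1$ separating its terminals is also a cut-set of the whole network of Fig.~2(a), this applies directly to $N_1$, and the four network graphs of Fig.~\ref{fig: Three-element graphs} are then checked one by one. Your substitutes --- (i) reject configurations in which two same-type reactive elements combine into one, and (ii) reject configurations in which a pole of $Z_1$ cancels against $N_2$ --- let through configurations that must be excluded. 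A concrete counterexample is $N_1 = (L_1 \mbox{ in series with } R_1)$ in parallel with $L_2$, and its $CC$ counterpart: the resistor ``insulates'' the two inductors, nothing recombines, and $Z_1(s)=(L_1L_2s^2+R_1L_2s)/((L_1+L_2)s+R_1)$ has full degree two, so both of your tests pass it; yet it is not among the configurations of Fig.~\ref{fig: Two-reactive configurations N1} and must be rejected because $\{L_1,L_2\}$ is a cut-set of one kind, which forces $Z_1$, hence $Z=Z_1+Z_2$, to have a pole at $s=\infty$ --- impossible when $A$ and $D$ are both positive. Your third criterion also misdiagnoses why $R$ in series with $(L\,\|\,C)$ and the all-series $R$--$L$--$C$ chain are excluded: the obstruction is not a cancellation against the unspecified $N_2$ (which cannot be checked ``configuration by configuration,'' since $N_2$ is only constrained to be \emph{some} two-reactive four-element network), but the fact that $Z_1$ then has poles at $0$, at $\infty$, or on the imaginary axis, none of which can be removed by adding a positive-real $Z_2$; in the paper this is absorbed into \cite[Lemma~1]{WCH14} and its companion \cite[Lemma~2]{WCH14} on series connections with purely reactive subnetworks.

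A secondary imprecision: the inference ``$Z_1$ must have degree exactly two, else $N_1$ reduces below three elements'' is not quite valid, since a degree-one positive-real function can still require three elements (one reactive element and two resistors); what saves you here is that a one-resistor, two-reactive series-parallel network has degree less than two only when the two reactive elements are of the same kind and directly combine, which is precisely the case you already discard. The essential repair is to replace your pruning tests by the cut-set/path conditions of \cite[Lemma~1]{WCH14} --- equivalently, the requirement that $Z_1$ and $1/Z_1$ be finite and nonzero at $s=0$ and $s=\infty$ and free of imaginary-axis poles --- after which the enumeration over Fig.~\ref{fig: Three-element graphs} does land exactly on the four configurations of Fig.~\ref{fig: Two-reactive configurations N1}.
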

\begin{figure}[thpb]
      \centering
      \subfigure[]{
      \includegraphics[scale=1.15]{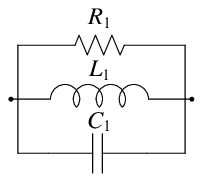}
      \label{fig: Two-reactive configurations N1 a}}
      \subfigure[]{
      \includegraphics[scale=1.15]{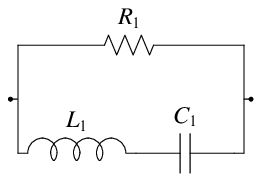}
      \label{fig: Two-reactive configurations N1 b}}
      \subfigure[]{
      \includegraphics[scale=1.15]{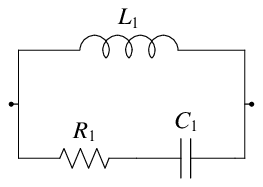}
      \label{fig: Two-reactive configurations N1 c}}
      \subfigure[]{
      \includegraphics[scale=1.15]{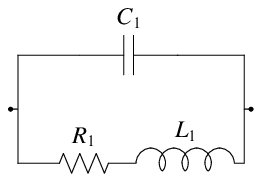}
      \label{fig: Two-reactive configurations N1 d}}
      \caption{Two-reactive three-element configurations for the $N_1$ mentioned in Lemma~\ref{lemma: Possible configurations of N1} (Fig.~8).}
      \label{fig: Two-reactive configurations N1}
\end{figure}
\begin{proof}
Let $\mathcal{C}(a,a')$ denote the \textit{cut-set} \cite[pg.~28]{SR61} separating a one-terminal-pair labeled graph of a network   into two connected subgraphs containing two terminal vertices $a$ and $a'$, respectively.
By \cite[Lemma~1]{WCH14}, for any realization of $Z(s)$ there is no cut-set $\mathcal{C}(a,a')$ corresponding to only one kind of reactive elements, where $a$ and $a'$ denote two terminals.
Since $N_1$ contains three elements, all the possible \textit{network graphs} \cite{CWSL13}
of $N_1$ are listed as in Fig.~\ref{fig: Three-element graphs}. By the method of enumeration, this lemma can be proved.
\end{proof}

\begin{figure}[thpb]
      \centering
      \subfigure[]{
      \includegraphics[scale=1.1]{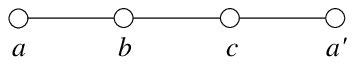}
      \label{fig: Three-element graphs a}}
      \subfigure[]{
      \includegraphics[scale=1.1]{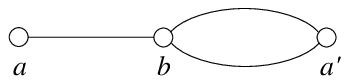}
      \label{fig: Three-element graphs b}} \\
      \subfigure[]{
      \includegraphics[scale=1.1]{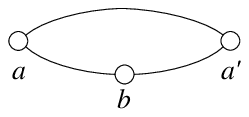}
      \label{fig: Three-element graphs c}}
      \subfigure[]{
      \includegraphics[scale=1.1]{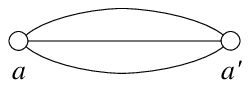}
      \label{fig: Three-element graphs d}}
      \caption{Possible network graphs for three-element networks.}
      \label{fig: Three-element graphs}
\end{figure}

\begin{lemma}  \label{lemma: 01 N1}
{A biquadratic impedance $Z(s) \in \mathcal{Z}_{p2,z2}$ cannot be realized as in Fig.~2(a), where $N_1$ is the configuration in Fig.~\ref{fig: Two-reactive configurations N1}(a)
and $N_2$ is a two-reactive four-element series-parallel network.}
\end{lemma}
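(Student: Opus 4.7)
The approach follows the same template as the proof of Lemma~2 in Section~VII: decompose $Z(s)=Z_a(s)+Z_b(s)$, where $Z_a(s)$ is the impedance of $N_1$ and $Z_b(s)$ the impedance of $N_2$; read off the form forced on $Z_b(s)$ by the requirement $Z(s)=k(s+z)^2/(s+p)^2$; and invoke Lemma~\ref{lemma: biquadratic impedances Z2 Four elements} together with the minimum-element-count assumption to drive the analysis to a contradiction.

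First, I would write $Z_a(s)$ explicitly for the three-element topology of Fig.~\ref{fig: Two-reactive configurations N1}(a). Since $N_1$ contains two reactive elements and one resistor arranged in the specified series-parallel fashion, $Z_a(s)$ is a rational function of degree at most two whose coefficients are rational in the three element values of $N_1$. Next, setting $Z_b(s) = Z(s) - Z_a(s)$ and observing that, by the assumption that $Z(s)$ requires at least seven elements, the conditions of Theorems~\ref{theorem: fewer than four elements}--\ref{theorem: five elements} and of Lemma~\ref{lemma: biquadratic impedances Z2 three elements} must all fail for the relevant sub-impedances, I would force $Z_b(s)$ into the form
\begin{equation*}
Z_b(s) = \frac{\alpha s^2 + \beta s + \gamma}{(s+p)^2}
\end{equation*}
with $\alpha,\beta,\gamma>0$. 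Matching $Z_a(s)+Z_b(s)$ coefficient-by-coefficient to $k(s+z)^2/(s+p)^2$ then yields explicit formulas for $\alpha,\beta,\gamma$ as functions of $k$, $z$, $p$, and the element values of $N_1$.

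Since $N_2$ is a two-reactive four-element series-parallel network realizing $Z_b(s)$, Lemma~\ref{lemma: biquadratic impedances Z2 Four elements} requires $Z_b(s)$ to satisfy at least one of Conditions~1--6. The crux of the proof is a case-by-case reduction: for each condition I would substitute the derived expressions for $\alpha,\beta,\gamma$ and either (i) exhibit an incompatibility with the positivity of the element values of $N_1$, or (ii) show that the resulting algebraic relation on $k,z,p$ forces $p/z$ into the range covered by Theorem~\ref{theorem: four elements} or Theorem~\ref{theorem: five elements}, contradicting the standing assumption that $Z(s)$ cannot be realized with fewer than seven elements. The pattern of each sub-case closely parallels the chain of inequalities \eqref{eq: N1 two N2 five 03}--\eqref{eq: N1 two N2 five 09} used in the proof of Lemma~2.

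The main obstacle I anticipate is the algebraic bookkeeping in the case analysis. Conditions~4, 5, and 6 of Lemma~\ref{lemma: biquadratic impedances Z2 Four elements} are quadratic or quartic equalities in $p$, and after substitution they become systems with the extra free parameters coming from $N_1$. Eliminating these parameters to obtain a clean constraint on $p/z$ will be the most delicate step; the positivity constraints on the element values of $N_1$ (rather than a purely algebraic elimination) will be essential for narrowing the feasible region and forcing the reduction to a smaller realization.
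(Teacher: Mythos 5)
Your proposal follows essentially the same route as the paper's proof: compute the impedance of the Fig.~\ref{fig: Two-reactive configurations N1}(a) topology (which forces $Z_1(s)=ms/(s+p)^2$, so only the single parameter $m$ survives), force $Z_2(s)=(\alpha s^2+\beta s+\gamma)/(s+p)^2$ with $\alpha,\beta,\gamma>0$ satisfying Lemma~\ref{lemma: biquadratic impedances Z2 Four elements}, match coefficients to get $\beta+m=2z\alpha$ and $\gamma=z^2\alpha$, and then run the six-condition case analysis to a positivity contradiction or to a ratio $p/z$ already covered by the fewer-element theorems. The elimination you worry about is in fact benign here, since the specific topology leaves only one free parameter from $N_1$; otherwise the plan matches the paper's argument step for step.
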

\begin{proof}
By calculation,  the impedance of the configuration in Fig.~\ref{fig: Two-reactive configurations N1}(a) is obtained as
\begin{equation*}
Z_1(s) = \frac{R_1L_1 s}{R_1L_1C_1 s^2 + L_1 s + R_1}.
\end{equation*}
Since by assumption the condition of Theorem~1 does not hold, $N_2$ cannot be equivalent to
a series-parallel network containing fewer than four elements. If
$Z(s)$ is realizable as in
Fig.~2(a), where $N_1$ is the configuration in Fig.~\ref{fig: Two-reactive configurations N1}(a) and $N_2$ is a two-reactive four-element series-parallel network, then
the impedance of $N_1$ is in the form of
\begin{equation*}
Z_1(s) = \frac{m s}{(s + p)^2},
\end{equation*}
where $m > 0$, and the impedance of $N_2$ is in the form of
\begin{equation}  \label{eq: 01 N1 00 Z2}
Z_2(s) = \frac{\alpha s^2 + \beta s + \gamma}{(s + p)^2},
\end{equation}
where $\alpha$, $\beta$, $\gamma$ $> 0$, and moreover the condition of Lemma~\ref{lemma: biquadratic impedances Z2 Four elements} holds.
Since
$\alpha s^2 + (\beta + m) s + \gamma = k(s + z)^2$,
it follows that
\begin{align}
\beta + m &= 2 z \alpha, \label{eq: 01 N1 01} \\
\gamma &= z^2 \alpha.  \label{eq: 01 N1 02}
\end{align}

Since $\alpha$, $\gamma$ $> 0$, $Z_2(s)$
satisfies neither Condition~1 nor Condition~2 of Lemma~\ref{lemma: biquadratic impedances Z2 Four elements}.

If $Z_2(s)$ satisfies Condition~3 of Lemma~\ref{lemma: biquadratic impedances Z2 Four elements}, then $\alpha p^2 - \gamma = 0$. Together with \eqref{eq: 01 N1 02}, it follows that $p = z$, which contradicts the assumption.

If $Z_2(s)$ satisfies Condition~4 of Lemma~\ref{lemma: biquadratic impedances Z2 Four elements}, then $\alpha p^2 < \gamma$ and
  either
  $3 \alpha p^2 + \gamma - 2\beta p = 0$ or $\beta^2 p^2 + \gamma^2 - \alpha  \gamma p^2 - 2 \beta \gamma p = 0$ holds. For the case of $3 \alpha p^2 + \gamma - 2\beta p = 0$,
together with \eqref{eq: 01 N1 01} and \eqref{eq: 01 N1 02}, one obtains
$\alpha = - 2mp/((3p-z)(p-z))$, $\beta = -m(3p^2 + z^2)/((3p-z)(p-z))$, and $\gamma = -2mz^2p/((3p-z)(p-z))$,
which implies $z/3 < p < z$ by $\alpha$, $\beta$, $\gamma$ $> 0$.
Thus, the condition of Theorem~1 holds.
For the case of $\beta^2 p^2 + \gamma^2 - \alpha  \gamma p^2 - 2 \beta \gamma p = 0$, together with \eqref{eq: 01 N1 01} and \eqref{eq: 01 N1 02}, one obtains
\begin{equation}   \label{eq: 01 N1 03}
\alpha = \frac{mp}{z(3p-z)}, ~~~ \beta = -\frac{m(p-z)}{3p-z}, ~~~
\gamma = \frac{mzp}{3p - z},
\end{equation}
or
\begin{equation}   \label{eq: 01 N1 04}
\alpha = \frac{mp}{z(p - z)}, ~~~ \beta = \frac{m(p+z)}{p-z}, ~~~
\gamma = \frac{mzp}{p-z}.
\end{equation}
Because
$\alpha$, $\beta$, $\gamma$ $> 0$, it follows from \eqref{eq: 01 N1 03} that $z/3 < p < z$, which satisfies the condition of Theorem~1.
Substituting \eqref{eq: 01 N1 04} into $\alpha p^2 < \gamma$ yields
$(p+z)pm/z < 0$, which is impossible.

If $Z_2(s)$ satisfies Condition~5 of Lemma~\ref{lemma: biquadratic impedances Z2 Four elements}, then $\alpha p^2 > \gamma$ and either $\alpha p^2 + 3 \gamma - 2\beta p = 0$
  or $\alpha^2 p^2 + \beta^2   - 2 \alpha \beta p - \alpha \gamma  = 0$ holds. For the case of $\alpha p^2 + 3 \gamma - 2\beta p = 0$, together with \eqref{eq: 01 N1 01} and \eqref{eq: 01 N1 02}, one
obtains
$\alpha = -2mp/((p-z)(p-3z))$, $\beta = -m(p^2 + 3z^2)/((p-z)(p-3z))$, and $\gamma = -2mz^2p/((p-z)(p-3z))$,
which implies $z < p < 3z$ because $\alpha$, $\beta$, $\gamma$ $> 0$. Thus, the condition of Theorem~1 holds.
For the case of
$\alpha^2 p^2 + \beta^2   - 2 \alpha \beta p - \alpha \gamma  = 0$,
together with \eqref{eq: 01 N1 01} and \eqref{eq: 01 N1 02}, one
obtains
\begin{equation} \label{eq: 01 N1 05}
\alpha = -\frac{m}{p-3z}, ~~~ \beta = -\frac{m(p-z)}{p-3z}, ~~~
\gamma = - \frac{mz^2}{p - 3z},
\end{equation}
or
\begin{equation} \label{eq: 01 N1 06}
\alpha = - \frac{m}{p - z}, ~~~ \beta = - \frac{m(p + z)}{p - z}, ~~~ \gamma = - \frac{mz^2}{p - z}.
\end{equation}
It follows from \eqref{eq: 01 N1 05} that $z < p < 3z$
by
$\alpha$, $\beta$, $\gamma$ $> 0$. Thus, the condition of Theorem~1 holds. Substituting \eqref{eq: 01 N1 06} into $\alpha p^2 > \gamma$ yields $-m(p+z) > 0$, which is impossible.

If $Z_2(s)$ satisfies Condition~6 of Lemma~\ref{lemma: biquadratic impedances Z2 Four elements}, then $\alpha^2 p^4 + 6 \alpha \gamma p^2 + \gamma^2 - 2\alpha \beta p^3 - 2 \beta \gamma p = 0$.
Together with \eqref{eq: 01 N1 01} and \eqref{eq: 01 N1 02}, one
obtains $\alpha = 0$, $\beta = -m$, and $\gamma = 0$ or $\alpha = - 2mp(z^2 + p^2)/(p-z)^4$, $\beta = - m(p^4 + 6z^2p^2 + z^4)/(p-z)^4$, and $\gamma = - 2mz^2p(z^2+p^2)/(p-z)^4$,
which contradicts the assumption that $\alpha$, $\beta$, $\gamma$ $> 0$.

As a conclusion, this lemma is proved.
\end{proof}

\begin{lemma}  \label{lemma: 02 N1}
{If a biquadratic impedance $Z(s) \in \mathcal{Z}_{p2,z2}$ is realizable as in Fig.~2(a), where $N_1$ is one of the configurations in Figs.~\ref{fig: Two-reactive configurations N1}(b), \ref{fig: Two-reactive configurations N1}(c), and \ref{fig: Two-reactive configurations N1}(d),
and $N_2$ is a two-reactive four-element  series-parallel network, then the condition of Lemma~1 holds.}
\end{lemma}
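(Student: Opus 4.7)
The plan is to follow the template of the proof of Lemma~\ref{lemma: 01 N1} and treat the three configurations of Fig.~\ref{fig: Two-reactive configurations N1}(b)--(d) separately. For each configuration I would first compute the impedance $Z_1(s)$ of $N_1$ in closed form. Since $N_1$ is a two-reactive three-element series-parallel network whose graph admits no cut-set containing only one type of reactive element (by \cite[Lemma~1]{WCH14}), the two reactive elements are of opposite kinds and $Z_1(s)$ is a proper real-rational function of degree at most two. Writing $Z(s) = Z_1(s) + Z_2(s)$ and using that $Z(s)$ has only the double pole at $-p$, I would force $Z_1(s)$ into a one-parameter family parameterized by a positive scalar $m$ (playing the same role as in Lemma~\ref{lemma: 01 N1}), and put
\begin{equation*}
Z_2(s) = \frac{\alpha s^2 + \beta s + \gamma}{(s+p)^2}, \qquad \alpha, \beta, \gamma > 0.
\end{equation*}

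Equating $Z_1(s) + Z_2(s)$ with $k(s+z)^2/(s+p)^2$ then produces linear expressions for $\alpha, \beta, \gamma$ in terms of $k, z, p, m$, directly generalizing \eqref{eq: 01 N1 01}--\eqref{eq: 01 N1 02}. Since $N_2$ is a two-reactive four-element series-parallel network realizing $Z_2(s)$, Lemma~\ref{lemma: biquadratic impedances Z2 Four elements} applies, so at least one of its six conditions must be satisfied. Substituting the linear expressions for $\alpha, \beta, \gamma$ into each condition and enforcing the positivity constraints converts each subcase into either a contradiction, the forbidden equality $p = z$, or a polynomial inequality in the single ratio $p/z$; in every surviving case the resulting range on $p/z$ should be shown to imply the condition of Lemma~1.

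The main obstacle is the volume of the case analysis: three configurations times the six conditions of Lemma~\ref{lemma: biquadratic impedances Z2 Four elements} amount to up to eighteen subcases. Two structural observations will keep the enumeration tractable. First, Conditions~1 and~2 of Lemma~\ref{lemma: biquadratic impedances Z2 Four elements} require $\alpha = 0$ or $\gamma = 0$ and are therefore discharged uniformly by the strict positivity $\alpha, \beta, \gamma > 0$, reducing the workload to at most twelve genuine subcases. Second, the configurations of Fig.~\ref{fig: Two-reactive configurations N1}(b)--(d) are related to one another (and to the configuration of part~(a) already handled in Lemma~\ref{lemma: 01 N1}) by the duality and frequency-inverse transformations of Section~III, so under the substitutions $p \leftrightarrow p^{-1}$, $z \leftrightarrow z^{-1}$ and $p \leftrightarrow z$ the bulk of the algebraic work for one configuration can be transported to the others, with only the residual root selections verified by direct computation as in \eqref{eq: 01 N1 03}--\eqref{eq: 01 N1 06}. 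The residual calculations will closely mirror those in the proof of Lemma~\ref{lemma: 01 N1}: linear systems from Conditions~3 and~5 or Conditions~3 and~4 yield closed-form expressions $\alpha(m,p,z)$, $\beta(m,p,z)$, $\gamma(m,p,z)$, and positivity of these expressions (together with the sign constraints $\alpha p^2 \gtrless \gamma$ from each condition) pins down a bounded interval for $p/z$, which I expect will lie within the interval defining the condition of Lemma~1.
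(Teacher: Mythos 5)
Your overall strategy --- decompose $Z = Z_1 + Z_2$, force $Z_1$ into a one-parameter family with denominator $(s+p)^2$, and run $Z_2$ through the six conditions of Lemma~\ref{lemma: biquadratic impedances Z2 Four elements} --- is exactly the paper's, and your treatment of Conditions~3--6 would come out essentially as in the paper. But your first ``structural observation'' is wrong, and it is not a harmless shortcut: you assume $\alpha,\beta,\gamma>0$ for $Z_2$ and on that basis discard Conditions~1 and~2. That strict positivity only follows when $Z_1$ contributes nothing to the $s^2$ and constant coefficients of the numerator of $Z$, which is the situation of Lemma~\ref{lemma: 01 N1}, where $Z_1(s)=ms/(s+p)^2$. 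For configuration (b) the forced form is $Z_1(s) = m(s^2+p^2)/(s+p)^2$, so matching with $k(s+z)^2$ gives $\alpha = k-m$ and $\gamma = kz^2-mp^2$, either of which can vanish; for configuration (c), $Z_1(s) = ms(s+p/2)/(s+p)^2$ leaves $\alpha = k-m$ free to vanish. Hence Conditions~1 and~2 of Lemma~\ref{lemma: biquadratic impedances Z2 Four elements} are live subcases, and in the paper's proof they are precisely the ones producing the extreme intervals $z/(2+\sqrt{5}) < p < z$ and $z < p < (2+\sqrt{5})z$ --- genuinely realizable situations that must be shown to imply the condition of Lemma~1, not dismissed. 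Your outline would simply never examine them, so the case analysis is incomplete.

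A secondary point: the transport of algebra between configurations via duality is mostly unavailable here. Graph duality sends the series structure of Fig.~2(a) to the parallel structure of Fig.~2(b), so it cannot relate two different choices of $N_1$ within Fig.~2(a); only frequency inversion, which preserves the graph, is usable, and it pairs only configurations (c) and (d) (configurations (a) and (b) are each self-inverse). That is in fact the single reduction the paper makes: (b) and (c) are worked out in full, including the $\alpha=0$ and (for (b)) $\gamma=0$ subcases, and (d) is then obtained from (c) by the principle of frequency inversion. Plan accordingly on two complete case analyses rather than one transported three ways.
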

\begin{proof}
First, the case where $N_1$ is a configuration in Fig.~\ref{fig: Two-reactive configurations N1}(b) and $N_2$ is a two-reactive four-element series-parallel network will be discussed.

It is calculated that the impedance of the configuration in Fig.~\ref{fig: Two-reactive configurations N1}(b) is in the form of
\begin{equation} \label{eq: Z1 b configuration}
Z_1(s) = \frac{R_1L_1C_1 s^2 + R_1}{L_1C_1 s^2 + R_1C_1 s + 1}.
\end{equation}
Since it is assumed that the condition of Theorem~1 does not hold, $N_2$ cannot be equivalent to
a series-parallel network containing fewer than four elements. If $Z(s)$ is realizable as in Fig.~2(a), where $N_1$ is the configuration in Fig.~\ref{fig: Two-reactive configurations N1}(b)
and $N_2$ is a two-reactive four-element  series-parallel network,
then the impedance of $N_1$ is in the form of
\begin{equation*}
Z_1(s) = \frac{m (s^2 + p^2)}{(s + p)^2},
\end{equation*}
where $m > 0$, and the impedance of $N_2$ is in the form of \eqref{eq: 01 N1 00 Z2}, where $\beta > 0$, $\alpha$,  $\gamma$ $\geq 0$, and the condition of Lemma~\ref{lemma: biquadratic impedances Z2 Four elements} holds.
Since
$(m + \alpha) s^2 + \beta s + (mp^2 + \gamma) = k (s + z)^2$,
it follows that
\begin{align}
\beta &= 2z(m + \alpha), \label{eq: 02 N1 01} \\
mp^2 + \gamma &= z^2(m + \alpha).  \label{eq: 02 N1 02}
\end{align}

If $Z_2(s)$ satisfies Condition~1 of Lemma~\ref{lemma: biquadratic impedances Z2 Four elements}, then $\beta$, $\gamma$ $> 0$, $\alpha = 0$, and $\gamma < 2 \beta p$. Together with \eqref{eq: 02 N1 01} and \eqref{eq: 02 N1 02}, one obtains
\begin{equation} \label{eq: 02 N1 03}
\beta = 2mz, ~~~ \gamma = -m(p-z)(p+z),
\end{equation}
which implies $p < z$ by $\beta$, $\gamma$ $> 0$.
Substituting \eqref{eq: 02 N1 03} into $\gamma < 2 \beta p$ yields
$m(p^2 + 4zp - z^2) > 0$.
This implies $z/(2 + \sqrt{5}) < p < z$, which satisfies the condition of Lemma~1.

If $Z_2(s)$ satisfies Condition~2 of Lemma~\ref{lemma: biquadratic impedances Z2 Four elements}, then $\alpha$, $\beta$ $> 0$, $\gamma = 0$, and $\alpha p < 2 \beta$. Together with \eqref{eq: 02 N1 01} and \eqref{eq: 02 N1 02}, one obtains
\begin{equation}  \label{eq: 02 N1 04}
\alpha = \frac{m(p-z)(p+z)}{z^2}, ~~~ \beta = \frac{2mp^2}{z},
\end{equation}
which implies $p > z$ by $\alpha$, $\beta$ $> 0$.
Substituting \eqref{eq: 02 N1 04} into $\alpha p < 2 \beta$ yields
$mp(p^2 - 4zp - z^2)/z^2 < 0$.
This further implies $z < p < (2 + \sqrt{5})z$. Therefore, the condition of Lemma~1 holds.

If $Z_2(s)$ satisfies Condition~3 of Lemma~\ref{lemma: biquadratic impedances Z2 Four elements}, then  $\alpha$, $\beta$, $\gamma$ $> 0$  and $\alpha p^2 - \gamma = 0$.  Together with \eqref{eq: 02 N1 01} and \eqref{eq: 02 N1 02}, one obtains $\alpha = -m$, $\beta = 0$, and $\gamma = -mp^2$, which contradicts the assumption.

If $Z_2(s)$ satisfies Condition~4 of Lemma~\ref{lemma: biquadratic impedances Z2 Four elements}, then $\alpha$, $\beta$, $\gamma$ $> 0$, $\alpha p^2 < \gamma$, and
either $3 \alpha p^2 + \gamma - 2\beta p = 0$ or $\beta^2 p^2 + \gamma^2 - \alpha  \gamma p^2 - 2 \beta \gamma p = 0$. For the case of $3 \alpha p^2 + \gamma - 2\beta p = 0$, together with \eqref{eq: 02 N1 01} and \eqref{eq: 02 N1 02}, one obtains
\begin{equation}  \label{eq: 02 N1 05}
\alpha = \frac{m(p^2 + 4zp - z^2)}{(3p-z)(p-z)}, ~~~
\beta = \frac{8mzp^2}{(3p-z)(p-z)}, ~~~
\gamma = - \frac{mp^2(3p^2 - 4zp - 3z^2)}{(3p-z)(p-z)}.
\end{equation}
Substituting \eqref{eq: 02 N1 05} into $\alpha p^2 < \gamma$ yields
$4mp^2(p+z)/(3p-z) < 0$,
which implies $z/(2 + \sqrt{5}) < p < z/3$ together with \eqref{eq: 02 N1 05} by $\alpha$, $\beta$, $\gamma$ $> 0$. Thus, the condition of Lemma~1 holds.
For the case of $\beta^2 p^2 + \gamma^2 - \alpha  \gamma p^2 - 2 \beta \gamma p = 0$, together with \eqref{eq: 02 N1 01} and \eqref{eq: 02 N1 02}, one obtains
\begin{equation}  \label{eq: 02 N1 06}
\alpha = -m, ~~~ \beta = 0, ~~~ \gamma = -mp^2,
\end{equation}
or
\begin{equation}  \label{eq: 02 N1 07}
\alpha = - \frac{m(p^2 + 2zp - z^2)^2}{z^2(3p-z)(p-z)}, ~~~
\beta = - \frac{2mp^2(p^2 + 4zp - z^2)}{z(3p-z)(p-z)}, ~~~
\gamma = - \frac{4mp^4}{(3p-z)(p-z)}.
\end{equation}
By $\alpha$, $\beta$, $\gamma$ $> 0$, \eqref{eq: 02 N1 06} is impossible.  It is implied from \eqref{eq: 02 N1 07} that
$z/3 < p < z$, which satisfies the condition of Lemma~1.

If $Z_2(s)$ satisfies Condition~5 of Lemma~\ref{lemma: biquadratic impedances Z2 Four elements}, then $\alpha$, $\beta$, $\gamma$ $> 0$, $\alpha p^2 > \gamma$, and either $\alpha p^2 + 3 \gamma - 2\beta p = 0$ or
$\alpha^2 p^2 + \beta^2   - 2 \alpha \beta p - \alpha \gamma  = 0$. For the case of  $\alpha p^2 + 3 \gamma - 2\beta p = 0$,
together with \eqref{eq: 02 N1 01} and \eqref{eq: 02 N1 02}, one obtains
\begin{equation} \label{eq: 02 N1 08}
\alpha = \frac{m(3p^2 + 4zp - 3z^2)}{(p-z)(p-3z)}, ~~~
\beta = \frac{8mzp^2}{(p-z)(p-3z)}, ~~~
\gamma = -\frac{mp^2(p^2 - 4zp - z^2)}{(p-z)(p-3z)}.
\end{equation}
Substituting \eqref{eq: 02 N1 08} into $\alpha p^2 > \gamma$ yields
$4mp^2(p+z)/(p-3z) > 0$,
which implies $3z < p < (2 + \sqrt{5})z$ together with \eqref{eq: 02 N1 08} by $\alpha$, $\beta$, $\gamma$ $> 0$.
For the case of $\alpha^2 p^2 + \beta^2   - 2 \alpha \beta p - \alpha \gamma  = 0$, together with \eqref{eq: 02 N1 01} and \eqref{eq: 02 N1 02}, one obtains
\begin{equation}  \label{eq: 02 N1 09}
\alpha = -m, ~~~ \beta = 0, ~~~ \gamma = - m p^2,
\end{equation}
or
\begin{equation}  \label{eq: 02 N1 10}
\alpha = - \frac{4mz^2}{(p-z)(p-3z)}, ~~~
\beta = \frac{2mz(p^2 - 4zp - z^2)}{(p-z)(p-3z)}, ~~~
\gamma = - \frac{m(p^2 - 2zp - z^2)^2}{(p-z)(p-3z)}.
\end{equation}
It is obvious that
\eqref{eq: 02 N1 09} contradicts the assumption that $\alpha$, $\beta$, $\gamma$ $> 0$.
Moreover, it is implied from \eqref{eq: 02 N1 10} that
$z < p < 3z$, which satisfies the condition of Theorem~1.

If $Z_2(s)$ satisfies Condition~6 of Lemma~\ref{lemma: biquadratic impedances Z2 Four elements}, then $\alpha$, $\beta$, $\gamma$ $> 0$ and $\alpha^2 p^4 + 6 \alpha \gamma p^2 + \gamma^2 - 2\alpha \beta p^3 - 2 \beta \gamma p = 0$. Together with \eqref{eq: 02 N1 01} and \eqref{eq: 02 N1 02}, one obtains
\begin{equation} \label{eq: 02 N1 11}
\begin{split}
\alpha &= \frac{m(3p^4 - 2 z^2p^2 + 4z^3p - z^4 + 2p^2\sqrt{2p^4 + 2 z^4})}{(p-z)^4},    \\
\beta &= \frac{4mzp^2(2p^2 - 2zp + 2z^2 +\sqrt{2p^4 + 2z^4})}{(p-z)^4},      \\
\gamma &= \frac{mp^2(-p^4 + 4zp^3 - 2z^2p^2 + 3z^4 + 2z^2\sqrt{2p^4 + 2z^4})}{(p-z)^4},
\end{split}
\end{equation}
or
\begin{equation} \label{eq: 02 N1 12}
\begin{split}
\alpha &= \frac{m(3p^4 - 2 z^2p^2 + 4z^3p - z^4 - 2p^2\sqrt{2p^4 + 2 z^4})}{(p-z)^4},    \\
\beta &= \frac{4mzp^2(2p^2 - 2zp + 2z^2 -\sqrt{2p^4 + 2z^4})}{(p-z)^4},      \\
\gamma &= \frac{mp^2(-p^4 + 4zp^3 - 2z^2p^2 + 3z^4 - 2z^2\sqrt{2p^4 + 2z^4})}{(p-z)^4}.
\end{split}
\end{equation}
Consider the solutions in \eqref{eq: 02 N1 11}.
Assume that $p \geq (2 + \sqrt{5})z$. Then, $\gamma < 0$ since
$-p^4 + 4zp^3 - 2z^2p^2 + 3z^4 < 0$ and
$(2z^2\sqrt{2p^4 + 2z^4})^2 - (-p^4 + 4zp^3 - 2z^2p^2 + 3z^4)^2 = -(p+z)(p^2 - 4zp - z^2)(p-z)^5 \leq 0$. This contradicts the assumption. Assume that $p \leq z/(2 + \sqrt{5})$. Then, $\alpha < 0$ since $3p^4 - 2 z^2p^2 + 4z^3p - z^4 < 0$ and $(2p^2\sqrt{2p^4 + 2z^4})^2 - (3p^4 - 2z^2p^2 + 4z^3p - z^4)^2 = -(p+z)(p^2 + 4zp - z^2)(p-z)^5 \leq 0$. This also contradicts the assumption.
Consider the solutions in \eqref{eq: 02 N1 12}. Assume that $p \geq (2 + \sqrt{5})z$. Then, $\gamma < 0$ because of $-p^4 + 4zp^3 - 2z^2p^2 + 3z^4 < 0$. This contradicts the assumption. Assume that $p \leq z/(2 + \sqrt{5})$. Then, $\alpha < 0$ because of $3p^4 - 2 z^2p^2 + 4z^3p - z^4 < 0$. This also contradicts the assumption.

Therefore, it can be proved that if
a biquadratic impedance $Z(s) \in \mathcal{Z}_{p2,z2}$ is realizable as in Fig.~2(a),
where $N_1$ is one of the configurations in Fig.~\ref{fig: Two-reactive configurations N1}(b)
and $N_2$ is a two-reactive four-element series-parallel network, then the condition of Lemma~1 holds.

Then, it turns to the case where $N_1$ is a configurations in Figs.~\ref{fig: Two-reactive configurations N1}(c) and $N_2$ is a two-reactive four-element series-parallel network.

It is calculated that the impedance of the configuration in Fig.~\ref{fig: Two-reactive configurations N1}(c)  is in the form of
\begin{equation}  \label{eq: Z1 c configuration}
Z_1(s) = \frac{s(R_1L_1C_1 s + L_1)}{L_1C_1 s^2 + R_1C_1 s + 1}.
\end{equation}
Since it is assumed that the condition of Theorem~1 does not hold, $N_2$ cannot be equivalent to
a series-parallel network containing fewer than four elements.
If $Z(s)$ is realizable as in Fig.~2(a),
where $N_1$ is the configuration in Fig.~\ref{fig: Two-reactive configurations N1}(c)
and $N_2$ is a two-reactive four-element series-parallel network, then
the impedance of $N_1$ is in the form of
\begin{equation*}
Z_1(s) = \frac{ms(s + p/2)}{(s + p)^2},
\end{equation*}
where $m > 0$, and the impedance of $N_2$ is in the form of
\eqref{eq: 01 N1 00 Z2},
where  $\beta$, $\gamma$ $> 0$, $\alpha$ $\geq 0$, and  the condition of Lemma~\ref{lemma: biquadratic impedances Z2 Four elements} holds. Since
$(m + \alpha) s^2 + (mp/2 + \beta) s + \gamma
= k(s + z)^2$,
it follows that
\begin{align}
\frac{mp}{2} + \beta &= 2(m+\alpha)z,  \label{eq: 03 N1 01} \\
\gamma &= (m+\alpha)z^2.   \label{eq: 03 N1 02}
\end{align}

If $Z_2(s)$ satisfies Condition~1 of Lemma~\ref{lemma: biquadratic impedances Z2 Four elements}, then
$\beta$, $\gamma$ $> 0$, $\alpha = 0$, and $\gamma < 2 \beta p$. Together with \eqref{eq: 03 N1 01} and \eqref{eq: 03 N1 02}, one obtains
\begin{equation} \label{eq: 03 N1 03}
\beta = - \frac{1}{2}m(p-4z), ~~~ \gamma = mz^2,
\end{equation}
which implies $p < 4z$ by $\beta > 0$.
Substituting \eqref{eq: 03 N1 03} into $\gamma < 2 \beta p$ yields
$m(p^2 - 4zp + z^2) < 0$,
which further implies $z/(2 + \sqrt{3}) < p < (2 + \sqrt{3})z$ together with \eqref{eq: 03 N1 03}. Thus, the condition of Lemma~1 holds.

Since $\gamma > 0$, $Z_2(s)$ cannot satisfy Condition~2 of Lemma~\ref{lemma: biquadratic impedances Z2 Four elements}.

If $Z_2(s)$ satisfies Condition~3 of Lemma~\ref{lemma: biquadratic impedances Z2 Four elements}, then $\alpha$, $\beta$, $\gamma$ $> 0$ and $\alpha p^2 - \gamma = 0$. Together with \eqref{eq: 03 N1 01} and \eqref{eq: 03 N1 02}, one obtains
\begin{equation*}
\alpha = \frac{mz^2}{(p-z)(p+z)}, ~~~
\beta = -\frac{mp(p^2 - 4zp - z^2)}{2(p-z)(p+z)}, ~~~
\gamma = \frac{mz^2p^2}{(p-z)(p+z)},
\end{equation*}
which implies $z < p < (2 + \sqrt{5})z$ by $\alpha$, $\beta$, $\gamma$ $> 0$. Thus, the condition of Lemma~1 holds.

If $Z_2(s)$ satisfies Condition~4 of Lemma~\ref{lemma: biquadratic impedances Z2 Four elements}, then $\alpha$, $\beta$, $\gamma$ $> 0$, $\alpha p^2 < \gamma$, and
  either
  $3 \alpha p^2 + \gamma - 2\beta p = 0$ or $\beta^2 p^2 + \gamma^2 - \alpha  \gamma p^2 - 2 \beta \gamma p = 0$.
For the case of $3 \alpha p^2 + \gamma - 2\beta p = 0$, together with \eqref{eq: 03 N1 01} and \eqref{eq: 03 N1 02}, one obtains
\begin{equation*}
\alpha = - \frac{m(p^2 - 4zp + z^2)}{(3p - z)(p - z)}, ~~~
\beta = - \frac{mp(3p^2 - 12zp + z^2)}{2(3p-z)(p-z)}, ~~~
\gamma = \frac{2mz^2p^2}{(3p-z)(p-z)},
\end{equation*}
which implies $z/(2 + \sqrt{3}) < p < z/3$ or $z < p < (2 + \sqrt{3})z$ by $\alpha$, $\beta$, $\gamma$ $> 0$. Thus, the condition of Lemma~1 holds.
For the case of $\beta^2 p^2 + \gamma^2 - \alpha  \gamma p^2 - 2 \beta \gamma p = 0$, together with \eqref{eq: 03 N1 01} and \eqref{eq: 03 N1 02}, one obtains
\begin{equation} \label{eq: 03 N1 07}
\begin{split}
\alpha &= \frac{m(2p^3 - 8zp^2 + 8z^2p - 2z^3 + p^2\sqrt{(p-z)(p-3z)})}{2z(3p-z)(p-z)},  \\
\beta &= \frac{mp(p^2 - z^2 + 2p\sqrt{(p-z)(p-3z)})}{2(3p-z)(p-z)},  \\
\gamma &= \frac{mzp^2(2p-2z+\sqrt{(p-z)(p-3z)})}{2(3p-z)(p-z)},
\end{split}
\end{equation}
or
\begin{equation} \label{eq: 03 N1 08}
\begin{split}
\alpha &= \frac{m(2p^3 - 8zp^2 + 8z^2p - 2z^3 - p^2\sqrt{(p-z)(p-3z)} )}{2z(3p-z)(p-z)},  \\
\beta &=  \frac{mp(p^2 - z^2 - 2p\sqrt{(p-z)(p-3z)})}{2(3p-z)(p-z)},   \\
\gamma &= \frac{2p - 2z - \sqrt{(p-z)(p-3z)}}{2(3p-z)(p-z)},
\end{split}
\end{equation}
where $p < z$ or $p > 3z$ must hold to guarantee the existence of the solutions.
Consider the solutions in \eqref{eq: 03 N1 07}. Substituting \eqref{eq: 03 N1 07} into $\alpha p^2 < \gamma$ yields
\begin{equation}  \label{eq: 03 N1 09}
\frac{mp^2(2p(p-3z) + (p+z)\sqrt{(p-z)(p-3z)})}{2(3p-z)z} < 0.
\end{equation}
It is further implied from \eqref{eq: 03 N1 09} that $p < z$. Moreover, since $((p+z)\sqrt{(p-z)(p-3z)})^2 - (2p^2 - 6zp)^2 = -(3p-z)(p-3z)(p^2 - 4zp - z^2)$, it is implied that $z/3 < p < z$, which satisfies  the condition of Theorem~1.
Consider the solutions in \eqref{eq: 03 N1 08}. The assumption that $\beta$, $\gamma$ $> 0$ implies $z/3 < p < z$ or $p > 3z$. Assume that $p \geq (2 + \sqrt{5})z$. Then, $\beta \leq 0$ since $p^2 - z^2 > 0$ and $(p^2 - z^2)^2 - (2p\sqrt{(p-z)(p-3z)})^2 =
-(p-z)(3p-z)(p^2 - 4zp - z^2) \leq 0$.
This contradicts the assumption.

If $Z_2(s)$ satisfies Condition~5 of Lemma~\ref{lemma: biquadratic impedances Z2 Four elements}, then
$\alpha$, $\beta$, $\gamma$ $> 0$, $\alpha p^2 > \gamma$, and either $\alpha p^2 + 3 \gamma - 2\beta p = 0$
  or $\alpha^2 p^2 + \beta^2   - 2 \alpha \beta p - \alpha \gamma  = 0$.
For the case of $\alpha p^2 + 3 \gamma - 2\beta p = 0$, together with \eqref{eq: 03 N1 01} and \eqref{eq: 03 N1 02}, one obtains
\begin{equation}  \label{eq: 03 N1 12}
\begin{split}
\alpha &= \frac{m(-p^2 + 6zp - 7z^2 + \sqrt{-z^2(p^2 - 4zp - z^2)})}{2(p-z)(p-3z)},   \\
\beta &= \frac{m(-p^3 + 6zp^2 - 7z^2p - 2z^3 + 2z\sqrt{-z^2(p^2 - 4zp - z^2)})}{2(p-z)(p-3z)},   \\
\gamma &= \frac{mz^2(p^2 - 2zp - z^2 + \sqrt{-z^2(p^2 - 4zp - z^2)})}{2(p-z)(p-3z)},
\end{split}
\end{equation}
or
\begin{equation}  \label{eq: 03 N1 13}
\begin{split}
\alpha &= \frac{m(-p^2 + 6zp - 7z^2 - \sqrt{-z^2(p^2 - 4zp - z^2)})}{2(p-z)(p-3z)},   \\
\beta &= \frac{m(-p^3 + 6zp^2 - 7z^2p - 2z^3 - 2z\sqrt{-z^2(p^2 - 4zp - z^2)})}{2(p-z)(p-3z)},   \\
\gamma &= \frac{mz^2(p^2 - 2zp - z^2 - \sqrt{-z^2(p^2 - 4zp - z^2)})}{2(p-z)(p-3z)},
\end{split}
\end{equation}
where $p \leq (2 + \sqrt{5})z$ must hold to guarantee the existence of the solutions. Assume that $p = (2 + \sqrt{5})z$. It is implied from \eqref{eq: 03 N1 12} and \eqref{eq: 03 N1 13} that $\beta = 0$, which contradicts the assumption.
Assume that $p \leq z/(2 + \sqrt{5})$. For \eqref{eq: 03 N1 12},  it is derived that $\alpha < 0$ since $(-z^2(p^2 - 4zp - z^2))^2 - (-p^2 + 6zp - 7z^2)^2 = -(p-z)(p-3z)(p-4z)^2 < 0$. This contradicts the assumption. For \eqref{eq: 03 N1 13}, it is derived that $\alpha$, $\beta$, $\gamma$ $< 0$ since $-p^2 + 6zp - 7z^2 < 0$, $-p^3 + 6zp^2 - 7z^2p - 2z^3 < 0$, and $p^2 - 2zp - z^2 < 0$.
This also contradicts the assumption.

If $Z_2(s)$ satisfies Condition~6 of Lemma~\ref{lemma: biquadratic impedances Z2 Four elements}, then $\alpha$, $\beta$, $\gamma$ $> 0$ and $\alpha^2 p^4 + 6 \alpha \gamma p^2 + \gamma^2 - 2\alpha \beta p^3 - 2 \beta \gamma p = 0$.
Together with \eqref{eq: 03 N1 01} and \eqref{eq: 03 N1 02}, one obtains
\begin{equation}  \label{eq: 03 N1 14}
\alpha = -m, ~~~ \beta = -\frac{mp}{2}, ~~~ \gamma = 0,
\end{equation}
or
\begin{equation}  \label{eq: 03 N1 15}
\begin{split}
\alpha  &= - \frac{mz^2(p^2 - 4zp + z^2)}{(p-z)^4}, \\
\beta  &= - \frac{mp(p^4 - 8zp^3 + 22z^2p^2 - 24z^3p + z^4)}{2(p-z)^4},  \\
\gamma  &= \frac{mz^2p^2(p^2 - 4zp + 5z^2)}{(p-z)^4}.
\end{split}
\end{equation}
It is obvious that \eqref{eq: 03 N1 14} is impossible. For \eqref{eq: 03 N1 15}, one implies $z/(2 + \sqrt{3}) < p < (2 + \sqrt{3})z$ by $\alpha$, $\beta$, $\gamma$ $> 0$. Thus, the condition of Lemma~1 holds.

Therefore, it can be proved that if
a biquadratic impedance $Z(s) \in \mathcal{Z}_{p2,z2}$ is realizable as in Fig.~2(a),
where $N_1$ is one of the configurations in Fig.~\ref{fig: Two-reactive configurations N1}(c)
and $N_2$ is a two-reactive four-element series-parallel network, then the condition of Lemma~1 holds.

It is clear that  any network in Fig.~2(a), where $N_1$ is a configuration in Fig.~\ref{fig: Two-reactive configurations N1}(d) and $N_2$ is a two-reactive four-element series-parallel network can be a frequency inverse network of the case where $N_1$ is a configuration in Fig.~\ref{fig: Two-reactive configurations N1}(c). By the principle of frequency inverse,
if
a biquadratic impedance $Z(s) \in \mathcal{Z}_{p2,z2}$ is realizable as in Fig.~2(a),
where $N_1$ is one of the configurations in Fig.~\ref{fig: Two-reactive configurations N1}(d)
and $N_2$ is a two-reactive four-element series-parallel network, then the condition of Lemma~1 holds.
\end{proof}

\begin{lemma}
\label{lemma: Possible configurations of N1 one-reactive three-element and N2 three-reactive four-element}
{Consider the four-reactive  seven-element series-parallel network in  Fig.~2(a),  realizing a biquadratic impedance $Z(s)$ in the form of \eqref{eq: general biquadratic impedances} with $A$, $B$, $C$, $D$, $E$, $F$ $> 0$, where $N_1$ is a one-reactive three-element series-parallel network and $N_2$ is a three-reactive  four-element series-parallel network. If $Z(s)$ cannot be realized as a series-parallel network containing fewer than seven elements, then $N_1$ will be equivalent to one of the  configurations in Fig.~\ref{fig: One-reactive Three-element N1}, and $N_2$ will be equivalent to one of the configurations in  Fig.~\ref{fig: Three-reactive Four-element N2}.}
\end{lemma}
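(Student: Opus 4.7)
The proof proposal follows the enumeration approach used in the proof of Lemma~\ref{lemma: Possible configurations of N1}. The plan is to apply \cite[Lemma~1]{WCH14} (no cut-set $\mathcal{C}(a,a')$ may consist of only a single kind of reactive element) together with the minimum-seven-element hypothesis on $Z(s)$, and then systematically rule out all labelings except those depicted in Figs.~\ref{fig: One-reactive Three-element N1} and \ref{fig: Three-reactive Four-element N2}.

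First I would dispose of $N_1$. Since $N_1$ has three elements with exactly one reactive element and two resistors, I would enumerate the possible \emph{network graphs} of a three-element series-parallel network, which are exactly those in Fig.~\ref{fig: Three-element graphs}, and for each graph enumerate the labelings where one edge is reactive (either $L$ or $C$) and the other two are resistors. Because $N_1$ contains only one reactive element, by \cite[Theorem~4.4.3]{AV73} its impedance $Z_1(s)$ has degree at most one. Any labeling in which $Z_1(s)$ further reduces to a one- or two-element network (for example, a series or parallel combination of two resistors that collapses to a single resistor, or a trivially shorted/open configuration) would allow $Z(s)$ to be realized with fewer than seven elements, contradicting the hypothesis. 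Additionally, any labeling whose terminal cut-set is formed solely by the reactive element must be discarded by \cite[Lemma~1]{WCH14}. After pruning and identifying equivalent configurations up to relabeling, only those in Fig.~\ref{fig: One-reactive Three-element N1} survive.

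Next I would handle $N_2$, which has four elements, one resistor and three reactives. The plan is to enumerate all four-element series-parallel network graphs, and for each such graph enumerate the placements of one $R$ and three reactive edges (with each reactive edge being either $L$ or $C$). Each candidate is tested against three criteria: (i) $N_2$ must not reduce to a network with fewer than four elements, for otherwise $Z(s)$ would be realizable by fewer than seven elements; (ii) no terminal cut-set of $N_2$ may consist of elements of only one reactive kind, by \cite[Lemma~1]{WCH14}; (iii) equivalences under the duality, frequency inverse, and combined maps defined in Section~III are used to group configurations into representative classes. The surviving representatives would then coincide with the configurations shown in Fig.~\ref{fig: Three-reactive Four-element N2}.

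The principal obstacle is combinatorial: four-element series-parallel graphs admit substantially more topologies than the three-element graphs of Fig.~\ref{fig: Three-element graphs}, and within each topology the choice of which edge is the resistor and which reactives are inductors versus capacitors produces many candidate labelings. Carefully quotienting out graph isomorphism, identifying labelings that are already equivalent via inversion/duality so that only a single representative is kept, and verifying the cut-set and irreducibility conditions in each remaining case constitutes the bulk of the work; no new analytic ingredient is required beyond the lemmas already cited.
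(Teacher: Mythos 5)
Your overall strategy coincides with the paper's: enumerate the admissible network graphs (Figs.~\ref{fig: Three-element graphs} and~\ref{fig: Four-element graphs}), prune labelings using the cut-set condition of \cite[Lemma~1]{WCH14} together with the hypothesis that $Z(s)$ requires seven elements, and reduce the survivors to the configurations of Figs.~\ref{fig: One-reactive Three-element N1} and~\ref{fig: Three-reactive Four-element N2}. The treatment of $N_1$ (degree bound via \cite[Theorem~4.4.3]{AV73}, irreducibility, cut-set condition) is consistent with what the paper does.

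The gap is in your criterion (iii) for $N_2$. The lemma asserts that $N_2$ is \emph{equivalent} to one of the configurations in Fig.~\ref{fig: Three-reactive Four-element N2}, where equivalence must mean realizing the same impedance --- that is exactly how the subsequent lemmas use the conclusion, by substituting the impedance of each listed configuration into $Z(s)=Z_1(s)+Z_2(s)$. The maps $\mathrm{Dual}$, $\mathrm{Inv}$ and $\mathrm{GDu}$ of Section~III do \emph{not} preserve the impedance: they send $Z(s)$ to $1/Z(s)$, $Z(s^{-1})$, or $1/Z(s^{-1})$. Hence quotienting the candidate list by these maps and keeping one representative per orbit does not show that a discarded labeling is equivalent to a retained one; it only shows its impedance is a transformed version of the retained one's. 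The paper's Fig.~\ref{fig: Three-reactive Four-element N2} deliberately keeps both members of such pairs as separate entries (for instance, subfigures (a)/(b), (c)/(d) and (f)/(h) are frequency-inverse duals of one another, a fact the later proofs exploit only at the level of the \emph{whole} network of Fig.~2(a), where the transformation of the realizability condition can be tracked). The reduction actually performed in the paper uses the impedance-preserving network equivalence of \cite[Lemma~11]{JS11}; you need that (or an explicit verification that two labelings realize the same class of impedances) in place of the duality/inversion quotient. With that substitution, your enumeration goes through and yields the stated lists.
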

\begin{proof}
For any realization of $Z(s)$,
there is no cut-set $\mathcal{C}(a,a')$ corresponding to one kind of reactive elements, where $a$ and $a'$ denote two terminal vertices,
by \cite[Lemma~1]{WCH14}.
The possible network graphs for subnetworks $N_1$ and $N_2$ are listed as in Figs.~\ref{fig: Three-element graphs} and \ref{fig: Four-element graphs}, respectively. Based on the method of enumeration and the equivalence in \cite[Lemma~11]{JS11}, $N_1$ can be equivalent to one of configurations in Fig.~\ref{fig: One-reactive Three-element N1}, and $N_2$ can be equivalent to one of configurations in Fig.~\ref{fig: Three-reactive Four-element N2}.
\end{proof}

\begin{figure}[thpb]
      \centering
      \subfigure[]{
      \includegraphics[scale=1.15]{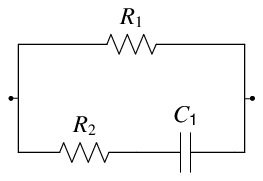}
      \label{fig: One-reactive Three-element N1 a}}
      \subfigure[]{
      \includegraphics[scale=1.15]{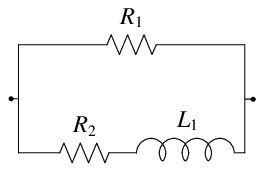}
      \label{fig: One-reactive Three-element N1 b}}
      \caption{One-reactive three-element series-parallel configurations for the $N_1$ mentioned in Lemma~\ref{lemma: Possible configurations of N1 one-reactive three-element and N2 three-reactive four-element}.}
      \label{fig: One-reactive Three-element N1}
\end{figure}

\begin{figure}[thpb]
      \centering
      \subfigure[]{
      \includegraphics[scale=1.15]{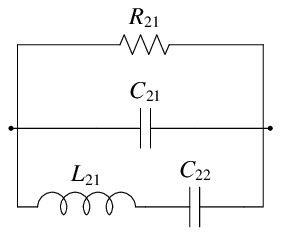}
      \label{fig: Three-reactive Four-element N2 a}}
      \subfigure[]{
      \includegraphics[scale=1.15]{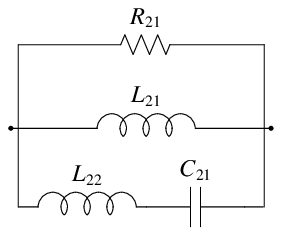}
      \label{fig: Three-reactive Four-element N2 b}}
      \subfigure[]{
      \includegraphics[scale=1.15]{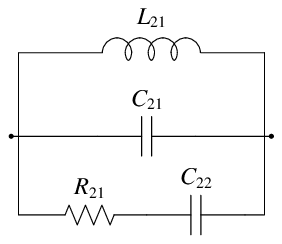}
      \label{fig: Three-reactive Four-element N2 c}}
      \subfigure[]{
      \includegraphics[scale=1.15]{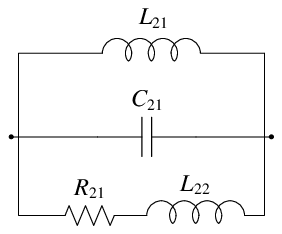}
      \label{fig: Three-reactive Four-element N2 d}}
      \subfigure[]{
      \includegraphics[scale=1.15]{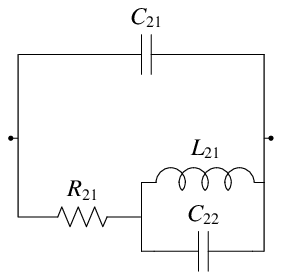}
      \label{fig: Three-reactive Four-element N2 e}}
      \subfigure[]{
      \includegraphics[scale=1.15]{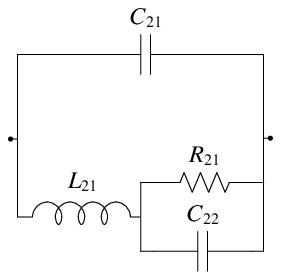}
      \label{fig: Three-reactive Four-element N2 f}}
      \subfigure[]{
      \includegraphics[scale=1.15]{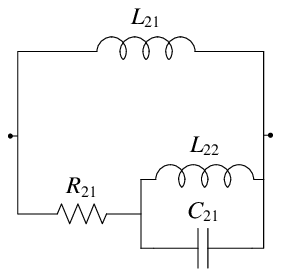}
      \label{fig: Three-reactive Four-element N2 g}}
      \subfigure[]{
      \includegraphics[scale=1.15]{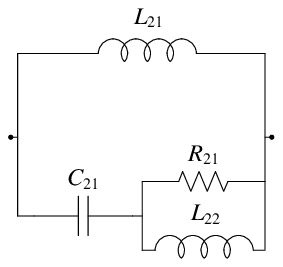}
      \label{fig: Three-reactive Four-element N2 h}}
      \caption{Three-reactive four-element series-parallel configurations for the $N_2$ mentioned in Lemma~\ref{lemma: Possible configurations of N1 one-reactive three-element and N2 three-reactive four-element}.}
      \label{fig: Three-reactive Four-element N2}
\end{figure}

\begin{figure}[thpb]
      \centering
      \subfigure[]{
      \includegraphics[scale=1.15]{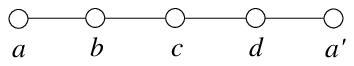}
      \label{fig: Four-element graphs a}}
      \subfigure[]{
      \includegraphics[scale=1.15]{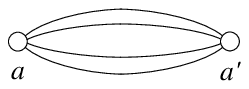}
      \label{fig: Four-element graphs b}} \\
      \subfigure[]{
      \includegraphics[scale=1.15]{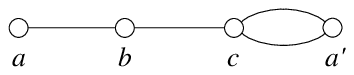}
      \label{fig: Four-element graphs c}}
      \subfigure[]{
      \includegraphics[scale=1.15]{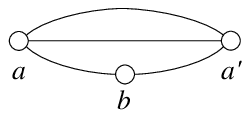}
      \label{fig: Four-element graphs d}}  \\
      \subfigure[]{
      \includegraphics[scale=1.15]{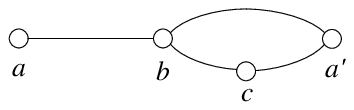}
      \label{fig: Four-element graphs e}}
      \subfigure[]{
      \includegraphics[scale=1.15]{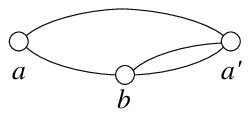}
      \label{fig: Four-element graphs f}}  \\
      \subfigure[]{
      \includegraphics[scale=1.15]{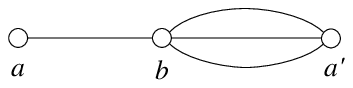}
      \label{fig: Four-element graphs g}}
      \subfigure[]{
      \includegraphics[scale=1.15]{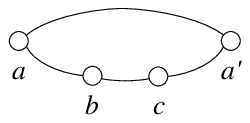}
      \label{fig: Four-element graphs h}}  \\
      \subfigure[]{
      \includegraphics[scale=1.15]{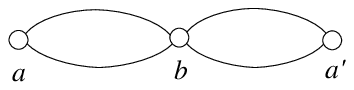}
      \label{fig: Four-element graphs i}}
      \subfigure[]{
      \includegraphics[scale=1.15]{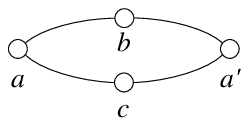}
      \label{fig: Four-element graphs j}}
      \caption{Possible network graphs for four-element networks.}
      \label{fig: Four-element graphs}
\end{figure}

\begin{lemma} \label{lemma: four-reactive seven-element configurations not realizable}
A biquadratic impedance $Z(s) \in \mathcal{Z}_{p2,z2}$  cannot be realized as  in Fig.~2(a), where $N_1$ is the configuration in Fig.~\ref{fig: One-reactive Three-element N1}(a),
and $N_2$ is one of the configurations in Figs.~\ref{fig: Three-reactive Four-element N2}(a) and \ref{fig: Three-reactive Four-element N2}(c)--\ref{fig: Three-reactive Four-element N2}(e).
\end{lemma}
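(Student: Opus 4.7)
The plan is to follow closely the template of the proofs of Lemma~\ref{lemma: 01 N1} and Lemma~\ref{lemma: 02 N1}. Assume for contradiction that $Z(s) = k(s+z)^2/(s+p)^2 \in \mathcal{Z}_{p2,z2}$ is realizable as the series connection of Fig.~2(a), so $Z(s) = Z_1(s) + Z_2(s)$, where $N_1$ is the configuration of Fig.~\ref{fig: One-reactive Three-element N1}(a) and $N_2$ is one of Figs.~\ref{fig: Three-reactive Four-element N2}(a), (c)--(e). Since $N_1$ contains only one reactive element, \cite[Theorem~4.4.3]{AV73} forces the degree of $Z_1(s)$ to be at most one; combined with the double-pole structure of $Z(s)$, this pins $Z_1(s)$ down to one essentially unique canonical form with denominator $(s+p)$ (say $Z_1(s) = ms/(s+p)$ or $Z_1(s) = m/(s+p)$ with $m > 0$), determined by the labelling of the reactive element in Fig.~\ref{fig: One-reactive Three-element N1}(a).

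Since by assumption the condition of Theorem~\ref{theorem: fewer than four elements} fails, $N_2$ cannot be reduced to fewer than four elements, so $Z_2(s)$ must be a genuine biquadratic $(\alpha s^2 + \beta s + \gamma)/(s+p)^2$ with $\alpha, \beta, \gamma > 0$. The identity $Z_1(s) + Z_2(s) = k(s+z)^2/(s+p)^2$ then yields, by matching the $s^1$ and $s^0$ coefficients of the numerator, two linear relations expressing $\beta$ and $\gamma$ in terms of $\alpha$, $m$, $p$, $z$, entirely analogous to \eqref{eq: 01 N1 01}--\eqref{eq: 01 N1 02}.

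Next, for each of the four target configurations of $N_2$, I would compute $Z_2(s)$ symbolically in terms of its element values (as was done in \eqref{eq: Z1 b configuration} and \eqref{eq: Z1 c configuration}), equate it with $(\alpha s^2 + \beta s + \gamma)/(s+p)^2$, and solve for the element values in terms of $(\alpha,\beta,\gamma,p)$. Combined with the two matching relations, this yields closed-form rational (or at worst $\pm\sqrt{\cdot}$) expressions for $\alpha, \beta, \gamma$ and for every element value of $N_2$ in terms of $p, z, m$. The final step is to impose positivity of all element values (equivalently of $\alpha, \beta, \gamma$ and of the additional topological parameters) and verify that every consistent branch either forces $p = z$, or forces $(p,z)$ into a range where the condition of Theorem~\ref{theorem: fewer than four elements} holds, each of which contradicts the hypothesis $Z(s) \in \mathcal{Z}_{p2,z2}$ being irreducible.

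The main obstacle is the sheer combinatorial and algebraic bookkeeping: four $N_2$ configurations, each producing its own matching problem with possibly multiple solution branches from quadratic equations (compare the $\pm\sqrt{(p-z)(p-3z)}$ branches in Lemma~\ref{lemma: 02 N1}), and each branch requiring careful sign analysis of polynomials such as $(p-z)(p-3z)$, $p^2 - 4zp - z^2$, and $p^2 + 4zp - z^2$ to rule out feasibility. A useful shortcut is to apply the principles of frequency inversion and duality from Section~III to identify pairs among Figs.~\ref{fig: Three-reactive Four-element N2}(a), (c)--(e) that are equivalent under these symmetries; since $N_1$ of Fig.~\ref{fig: One-reactive Three-element N1}(a) is itself invariant (up to relabelling) under one of these maps, the number of direct calculations can likely be halved, leaving at most two essentially distinct cases to treat in full detail.
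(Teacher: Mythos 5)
Your template is the wrong one. You have carried over the degree bookkeeping of Lemmas~\ref{lemma: 01 N1} and \ref{lemma: 02 N1}, where the seven elements split as a \emph{two}-reactive three-element $N_1$ plus a \emph{two}-reactive four-element $N_2$, but the present lemma lives in the four-reactive section with a \emph{one}-reactive three-element $N_1$ (Fig.~\ref{fig: One-reactive Three-element N1}(a)) and a \emph{three}-reactive four-element $N_2$. Consequently $Z_1(s)$ is a full bilinear function $Z_1(s)=(ms+q)/(s+p_1)$ with $m$, $q$, $p_1>0$ and the side condition $q-mp_1>0$, not a degenerate $ms/(s+p)$ or $m/(s+p)$; its pole $-p_1$ is a free auxiliary parameter that need not coincide with $-p$. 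Likewise $Z_2(s)$ has degree three, with denominator $(s+p_1)(s+p)^2$ and a numerator whose shape ($\alpha s^2+\gamma$, $\alpha s^2+\beta s$, or $\alpha s^2+\beta s+\gamma$) depends on which of Figs.~\ref{fig: Three-reactive Four-element N2}(a), (c)--(e) is in play; it is not a biquadratic over $(s+p)^2$. The coefficient matching is therefore against $k(s+z)^2(s+p_1)$ and produces four equations rather than two, and the decisive constraints are the additional polynomial relations in $p_1$ obtained by expressing the element values of $N_2$ in terms of $(\alpha,\beta,\gamma,p,p_1)$, such as \eqref{eq: Condition01 Noa configurations not realizable}--\eqref{eq: Condition02 Noa configurations not realizable}; the contradictions are reached by eliminating everything down to a sign condition in $p_1$, $p$, $z$ (e.g.\ $-p_1(p-z)^2/(p+z)>0$ for configuration (a)), or, for configurations (d) and (e), by forcing $p$ into the range $z/3<p<z$ where Theorem~1 of the main paper applies. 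None of this is reachable from your ansatz, so the proof as proposed fails at the very first step: with only one reactive element in $N_1$ there is no way to write $Z_2$ as $(\alpha s^2+\beta s+\gamma)/(s+p)^2$ and recover analogues of \eqref{eq: 01 N1 01}--\eqref{eq: 01 N1 02}.

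Your proposed shortcut via frequency inversion and duality is also not available in the form you suggest. Those symmetries do permute the Fig.~\ref{fig: Three-reactive Four-element N2} configurations (the paper uses this elsewhere to pair (a) with (b), (c) with (d), and (f) with (h)), but they act simultaneously on $N_1$, and one would have to verify that Fig.~\ref{fig: One-reactive Three-element N1}(a) is carried to itself rather than to Fig.~\ref{fig: One-reactive Three-element N1}(b); the paper does not rely on any such reduction here and instead treats all four $N_2$ configurations by direct computation.
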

\begin{proof}
By calculation, the impedance of the configuration in Fig.~\ref{fig: Three-reactive Four-element N2}(a) is obtained as
\begin{equation} \label{eq: 01 Noa configurations not realizable}
Z_2(s) = \frac{R_{21}L_{21}C_{22} s^2 + R_{21}}{R_{21}L_{21}C_{21}C_{22} s^3 + L_{21}C_{22} s^2 + R_{21}(C_{21} + C_{22}) s + 1}.
\end{equation}
If $Z(s)$ is realizable as  in Fig.~2(a), where $N_1$ is the configuration in Fig.~\ref{fig: One-reactive Three-element N1}(a) and $N_2$ is the configuration in Fig.~\ref{fig: Three-reactive Four-element N2}(a), then the impedance of $N_1$ is in the form of
\begin{equation} \label{eq: Z1 one-reactive three-element N1}
Z_1(s) = \frac{ms + q}{s + p_1},
\end{equation}
where $m$, $q$, $p_1$ $> 0$  and
\begin{equation} \label{eq: Realizability of N1 one-reactive three-element}
q - mp_1 > 0
\end{equation}
holds, and moreover the impedance of $N_2$ is in the form of
\begin{equation}  \label{eq: 02 Noa configurations not realizable}
Z_2(s) = \frac{\alpha s^2 + \gamma}{(s+p_1)(s+p)^2},
\end{equation}
where $\alpha$, $\gamma$ $> 0$. Then, it follows from \eqref{eq: 01 Noa configurations not realizable} and \eqref{eq: 02 Noa configurations not realizable} that
\begin{align}
\frac{1}{C_{21}} &= \alpha,
\label{eq: 03 Noa configurations not realizable} \\
\frac{1}{L_{21}C_{21}C_{22}} &= \gamma,
\label{eq: 04 Noa configurations not realizable}  \\
\frac{1}{R_{21} C_{21}} &=  2p + p_1,
\label{eq: 05 Noa configurations not realizable}  \\
\frac{C_{21} + C_{22}}{L_{21} C_{21} C_{22}} &= p(p+2p_1),
\label{eq: 06 Noa configurations not realizable}  \\
\frac{1}{R_{21} L_{21} C_{21} C_{22}} &= p_1p^2.
\label{eq: 07 Noa configurations not realizable}
\end{align}
From \eqref{eq: 03 Noa configurations not realizable}, one obtains
\begin{equation} \label{eq: C21 Noa configurations not realizable}
C_{21} = \frac{1}{\alpha}.
\end{equation}
It follows from \eqref{eq: 05 Noa configurations not realizable} and \eqref{eq: C21 Noa configurations not realizable} that
\begin{equation}  \label{eq: R21 Noa configurations not realizable}
R_{21} = \frac{\alpha}{2p+p_1}.
\end{equation}
By \eqref{eq: 04 Noa configurations not realizable}, \eqref{eq: 06 Noa configurations not realizable}, and \eqref{eq: C21 Noa configurations not realizable}, one obtains
\begin{equation}  \label{eq: C22 Noa configurations not realizable}
C_{22} = \frac{\alpha p^2 + 2\alpha p_1 p - \gamma}{\alpha \gamma}.
\end{equation}
By \eqref{eq: 04 Noa configurations not realizable}, \eqref{eq: C21 Noa configurations not realizable}, and \eqref{eq: C22 Noa configurations not realizable}, one obtains
\begin{equation}  \label{eq: L21 Noa configurations not realizable}
L_{21} = \frac{\alpha^2}{\alpha p^2 + 2 \alpha p_1 p - \gamma}.
\end{equation}
Substituting \eqref{eq: C21 Noa configurations not realizable}--\eqref{eq: L21 Noa configurations not realizable} into \eqref{eq: 07 Noa configurations not realizable} gives
\begin{equation}  \label{eq: Condition01 Noa configurations not realizable}
\alpha p_1 p^2 - 2\gamma p - \gamma p_1 = 0.
\end{equation}
The assumption that $C_{22} > 0$ and $L_{21} > 0$ gives
\begin{equation}  \label{eq: Condition02 Noa configurations not realizable}
\alpha p^2 + 2\alpha p_1 p - \gamma > 0.
\end{equation}
Based on \eqref{eq: Z1 one-reactive three-element N1} and \eqref{eq: 02 Noa configurations not realizable}, calculation yields
\begin{equation} \label{eq: Z1 + Z2 Noa configurations not realizable}
\begin{split}
&Z(s) = Z_1(s) + Z_2(s) = \\
&\frac{ms^3 + (2mp + \alpha + q)s^2 + p(mp + 2q)s + (qp^2 + \gamma)}{(s+p_1)(s+p)^2}.
\end{split}
\end{equation}
Comparing (1) with \eqref{eq: Z1 + Z2 Noa configurations not realizable}, one obtains
\begin{align}
m &= k, \label{eq: 101 Noa configurations not realizable}  \\
2mp + \alpha + q &= k(p_1 + 2z),   \label{eq: 102 Noa configurations not realizable}  \\
p(mp + 2q) &= kz(z+2p_1),  \label{eq: 103 Noa configurations not realizable}  \\
qp^2 + \gamma &= kp_1z^2.  \label{eq: 104 Noa configurations not realizable}
\end{align}
Then, \eqref{eq: 101 Noa configurations not realizable} and \eqref{eq: 103 Noa configurations not realizable} together  yield
\begin{equation} \label{eq: q Noa configurations not realizable}
q = \frac{-k(p^2 - 2p_1z - z^2)}{2p}.
\end{equation}
By \eqref{eq: 101 Noa configurations not realizable}, \eqref{eq: 102 Noa configurations not realizable}, and \eqref{eq: q Noa configurations not realizable}, one obtains
\begin{equation} \label{eq: alpha Noa configurations not realizable}
\alpha = \frac{-k(p-z)(3p - z - 2p_1)}{2p}.
\end{equation}
By \eqref{eq: 101 Noa configurations not realizable} and \eqref{eq: 104 Noa configurations not realizable}, one obtains
\begin{equation}  \label{eq: gamma Noa configurations not realizable}
\gamma = \frac{1}{2}k(p-z)(p^2 + zp - 2zp_1).
\end{equation}
Together with \eqref{eq: 101 Noa configurations not realizable} and \eqref{eq: q Noa configurations not realizable}--\eqref{eq: gamma Noa configurations not realizable}, condition~\eqref{eq: Realizability of N1 one-reactive three-element} is equivalent to
$p < z$, and \eqref{eq: Condition01 Noa configurations not realizable} and \eqref{eq: Condition02 Noa configurations not realizable} are equivalent to
\begin{align}
(p+z)p_1^2 - 2p(p-z)p_1 - p^2(p+z) &= 0,
\label{eq: Equivalent Condition01 Noa configurations not realizable}    \\
p^2 + p_1p - (p_1^2 + zp_1) &> 0,
\label{eq: Equivalent Condition02 Noa configurations not realizable}
\end{align}
respectively.
Then, it follows from \eqref{eq: Equivalent Condition01 Noa configurations not realizable} and \eqref{eq: Equivalent Condition02 Noa configurations not realizable} that $-p_1(p-z)^2/(p+z) > 0$, which is impossible.

Therefore, $Z(s) \in \mathcal{Z}_{p2,z2}$  cannot be realized as  in Fig.~2(a), where $N_1$ is the configuration in Fig.~\ref{fig: One-reactive Three-element N1}(a),
and $N_2$ is  the configurations in Fig.~\ref{fig: Three-reactive Four-element N2}(a).

It is calculated that the impedance of the configuration in Fig.~\ref{fig: Three-reactive Four-element N2}(c) is
\begin{equation} \label{eq: 01 Noc configurations not realizable}
Z_2(s) = \frac{R_{21} L_{21} C_{22} s^2 + L_{21} s}{R_{21} L_{21} C_{21} C_{22} s^3 + L_{21} (C_{21} + C_{22}) s^2 + R_{21} C_{22} s + 1}.
\end{equation}
If $Z(s)$ is realizable as  in Fig.~2(a), where $N_1$ is the configuration in Fig.~\ref{fig: One-reactive Three-element N1}(a)
and $N_2$ is the configuration in
Fig.~\ref{fig: Three-reactive Four-element N2}(c), then the impedance of $N_1$ is in the form of \eqref{eq: Z1 one-reactive three-element N1}, where $m$, $q$, $p_1$ $> 0$  and \eqref{eq: Realizability of N1 one-reactive three-element} holds, and the impedance of $N_2$ is in the form of
\begin{equation}  \label{eq: 02 Noc configurations not realizable}
Z_2(s) = \frac{\alpha s^2 + \beta s}{(s+p_1)(s+p)^2},
\end{equation}
where $\alpha$, $\beta$ $> 0$.  Then, it follows from \eqref{eq: 01 Noc configurations not realizable} and \eqref{eq: 02 Noc configurations not realizable} that
\begin{align}
\frac{1}{C_{21}} &= \alpha,
\label{eq: 03 Noc configurations not realizable} \\
\frac{1}{R_{21} C_{21} C_{22}} &= \beta,
\label{eq: 04 Noc configurations not realizable} \\
\frac{C_{21} + C_{22}}{R_{21} C_{21} C_{22}} &= 2p + p_1,
\label{eq: 05 Noc configurations not realizable} \\
\frac{1}{L_{21} C_{21}} &= p(p+2p_1),
\label{eq: 06 Noc configurations not realizable} \\
\frac{1}{R_{21} L_{21} C_{21} C_{22}} &= p_1p^2.
\label{eq: 07 Noc configurations not realizable}
\end{align}
It follows from \eqref{eq: 03 Noc configurations not realizable} that
\begin{equation} \label{eq: C21 Noc configurations not realizable}
C_{21} = \frac{1}{\alpha}.
\end{equation}
By \eqref{eq: 06 Noc configurations not realizable} and \eqref{eq: C21 Noc configurations not realizable}, one obtains
\begin{equation} \label{eq: L21 Noc configurations not realizable}
L_{21} = \frac{\alpha}{p(p+2p_1)}.
\end{equation}
By \eqref{eq: 04 Noc configurations not realizable}, \eqref{eq: 05 Noc configurations not realizable}, and \eqref{eq: C21 Noc configurations not realizable}, one obtains
\begin{equation}  \label{eq: C22 Noc configurations not realizable}
C_{22} = \frac{2\alpha p + \alpha p_1 - \beta}{\alpha \beta}.
\end{equation}
Then, \eqref{eq: 04 Noc configurations not realizable}, \eqref{eq: C21 Noc configurations not realizable}, and \eqref{eq: C22 Noc configurations not realizable} yield
\begin{equation}  \label{eq: R21 Noc configurations not realizable}
R_{21} = \frac{\alpha^2}{2\alpha p + \alpha p_1 - \beta}.
\end{equation}
Then, it follows from \eqref{eq: 07 Noc configurations not realizable}--\eqref{eq: R21 Noc configurations not realizable} that
\begin{equation} \label{eq: Condition01 Noc configurations not realizable}
(\beta - \alpha p_1)p + 2\beta p_1 = 0.
\end{equation}
The assumption that
$R_{21} > 0$ and $C_{22} > 0$ gives
\begin{equation} \label{eq: Condition02 Noc configurations not realizable}
2\alpha p + \alpha p_1 - \beta > 0.
\end{equation}
By \eqref{eq: Z1 one-reactive three-element N1} and \eqref{eq: 02 Noc configurations not realizable}, $Z(s)$ is calculated as
\begin{equation} \label{eq: Z1 + Z2 Noc configurations not realizable}
Z(s) = Z_1(s) + Z_2(s) =
\frac{ms^3 + (2mp + \alpha + q)s^2 + (mp^2 + 2qp + \beta)s + qp^2}{(s+p_1)(s+p)^2}.
\end{equation}
Comparing (1) with \eqref{eq: Z1 + Z2 Noc configurations not realizable}, one obtains
\begin{align}
m  &= k, \label{eq: 101 Noc configurations not realizable}  \\
2mp + \alpha + q &= k(p_1 + 2z),  \label{eq: 102 Noc configurations not realizable}   \\
mp^2 + 2qp + \beta &= kz(z+2p_1),  \label{eq: 103 Noc configurations not realizable}   \\
qp^2 &= kp_1z^2. \label{eq: 104 Noc configurations not realizable}
\end{align}
Then, \eqref{eq: 104 Noc configurations not realizable} yields
\begin{equation} \label{eq: q Noc configurations not realizable}
q = \frac{kp_1z^2}{p^2}.
\end{equation}
By \eqref{eq: 101 Noc configurations not realizable}, \eqref{eq: 102 Noc configurations not realizable}, and \eqref{eq: q Noc configurations not realizable}, one obtains
\begin{equation} \label{eq: alpha Noc configurations not realizable}
\alpha = -\frac{k(p-z)(2p^2-p_1(p+z))}{p^2}.
\end{equation}
It follows from \eqref{eq: 101 Noc configurations not realizable}, \eqref{eq: 103 Noc configurations not realizable}, and \eqref{eq: q Noc configurations not realizable} that
\begin{equation}  \label{eq: beta Noc configurations not realizable}
\beta = - \frac{k(p-z)(p^2 + z(p-2p_1))}{p}.
\end{equation}
Together with \eqref{eq: 101 Noc configurations not realizable} and \eqref{eq: q Noc configurations not realizable}--\eqref{eq: beta Noc configurations not realizable}, condition~\eqref{eq: Realizability of N1 one-reactive three-element} is equivalent to $p < z$, and  \eqref{eq: Condition01 Noc configurations not realizable} and $\alpha > 0$ are equivalent to
\begin{equation}  \label{eq: Equivalent Condition01 Noc configurations not realizable}
(p - 3z)p_1^2 + p^2(p+z) = 0
\end{equation}
and
\begin{equation}  \label{eq: alpha positive Noc configurations not realizable}
p_1 < \frac{2p^2}{p+z},
\end{equation}
respectively. Combining \eqref{eq: Equivalent Condition01 Noc configurations not realizable} and \eqref{eq: alpha positive Noc configurations not realizable}, one obtains $p^2(5p+z)(p-z)^2/(p+z)^2 < 0$, which is impossible.

Therefore, $Z(s) \in \mathcal{Z}_{p2,z2}$  cannot be realized as  in Fig.~2(a), where $N_1$ is the configuration in Fig.~\ref{fig: One-reactive Three-element N1}(a),
and $N_2$ is  the configurations in Fig.~\ref{fig: Three-reactive Four-element N2}(c).

It is calculated that the impedance of the configuration in Fig.~\ref{fig: Three-reactive Four-element N2}(d) is
\begin{equation} \label{eq: 01 Nod configurations not realizable}
Z_2(s) = \frac{L_{21} L_{22} s^2 + R_{21} L_{21} s}{L_{21} L_{22} C_{21} s^3 + R_{21} L_{21} C_{21} s^2 + (L_{21} + L_{22}) s + R_{21}}.
\end{equation}
If $Z(s)$ is realizable as in Fig.~2(a), where $N_1$ is the configuration in Fig.~\ref{fig: One-reactive Three-element N1}(a) and $N_2$ is the configuration in Fig.~\ref{fig: Three-reactive Four-element N2}(d), then the impedance of $N_1$ is in the form of \eqref{eq: Z1 one-reactive three-element N1}, where $m$, $q$, $p_1$ $> 0$  and \eqref{eq: Realizability of N1 one-reactive three-element} holds, and the impedance of $N_2$ is in the form of
\eqref{eq: 02 Noc configurations not realizable} where $\alpha$, $\beta$ $> 0$. Then, it follows from \eqref{eq: 02 Noc configurations not realizable} and \eqref{eq: 01 Nod configurations not realizable} that
\begin{align}
\frac{1}{C_{21}} &= \alpha, \label{eq: 03 Nod configurations not realizable} \\
\frac{R_{21}}{L_{22} C_{21}} &= \beta, \label{eq: 04 Nod configurations not realizable} \\
\frac{R_{21}}{L_{22}} &= 2p + p_1, \label{eq: 05 Nod configurations not realizable} \\
\frac{L_{21}+L_{22}}{L_{21} L_{22} C_{21}} &= p(p+2p_1), \label{eq: 06 Nod configurations not realizable} \\
\frac{R_{21}}{L_{21} L_{22} C_{21}} &= p_1p^2. \label{eq: 07 Nod configurations not realizable}
\end{align}
It follows from \eqref{eq: 03 Nod configurations not realizable} that
\begin{equation} \label{eq: C21 Nod configurations not realizable}
C_{21} = \frac{1}{\alpha}.
\end{equation}
By \eqref{eq: 04 Nod configurations not realizable}, \eqref{eq: 05 Nod configurations not realizable}, and \eqref{eq: C21 Nod configurations not realizable}, one obtains
\begin{equation} \label{eq: Condition01 Nod configurations not realizable}
2 \alpha p + \alpha p_1 - \beta = 0.
\end{equation}
Then, \eqref{eq: 05 Nod configurations not realizable}, \eqref{eq: 07 Nod configurations not realizable}, and \eqref{eq: C21 Nod configurations not realizable} yield
\begin{equation}  \label{eq: L21 Nod configurations not realizable}
L_{21} = \frac{\alpha (2p + p_1)}{p_1 p^2}.
\end{equation}
It follows from \eqref{eq: 06 Nod configurations not realizable}, \eqref{eq: C21 Nod configurations not realizable}, and \eqref{eq: L21 Nod configurations not realizable} that
\begin{equation}  \label{eq: L22 Nod configurations not realizable}
L_{22} = \frac{\alpha (2p + p_1)}{2p(p+p_1)^2}.
\end{equation}
By \eqref{eq: 05 Nod configurations not realizable} and \eqref{eq: L22 Nod configurations not realizable}, one obtains
\begin{equation}
R_{21} = \frac{\alpha (2p + p_1)^2}{2p(p+p_1)^2}.
\end{equation}
By \eqref{eq: Z1 one-reactive three-element N1} and \eqref{eq: 02 Noc configurations not realizable}, $Z(s)$ is calculated as
\eqref{eq: Z1 + Z2 Noc configurations not realizable}. Then, one obtains  \eqref{eq: 101 Noc configurations not realizable}--\eqref{eq: beta Noc configurations not realizable}. Furthermore, it follows from conditions~\eqref{eq: Realizability of N1 one-reactive three-element} and \eqref{eq: Condition01 Nod configurations not realizable} that $z/3 < p < z$, which satisfies the condition of Theorem~1.

Therefore, $Z(s) \in \mathcal{Z}_{p2,z2}$  cannot be realized as  in Fig.~2(a), where $N_1$ is the configuration in Fig.~\ref{fig: One-reactive Three-element N1}(a),
and $N_2$ is  the configurations in Fig.~\ref{fig: Three-reactive Four-element N2}(d).

It is calculated that the impedance of the configuration in Fig.~\ref{fig: Three-reactive Four-element N2}(e) is
\begin{equation} \label{eq: 01 Noe configurations not realizable}
Z_2(s) = \frac{R_{21} L_{21} C_{22} s^2 + L_{21} s + R_{21}}{R_{21} L_{21} C_{21} C_{22} s^3 + L_{21} (C_{21} + C_{22}) s^2 + R_{21} C_{21} s + 1}.
\end{equation}
If $Z(s)$ is realizable as  in Fig.~2(a), where $N_1$ is the configuration in Fig.~\ref{fig: One-reactive Three-element N1}(a) and $N_2$ is the configuration in Fig.~\ref{fig: Three-reactive Four-element N2}(e), then the impedance of $N_1$ is in the form of \eqref{eq: Z1 one-reactive three-element N1}, where $m$, $q$, $p_1$ $> 0$  and \eqref{eq: Realizability of N1 one-reactive three-element} holds, and the impedance of $N_2$ is in the form of
\begin{equation}  \label{eq: 02 Noe configurations not realizable}
Z_2(s) = \frac{\alpha s^2 + \beta s + \gamma}{(s+p_1)(s+p)^2},
\end{equation}
where $\alpha$, $\beta$, $\gamma$ $> 0$. Then, it follows from \eqref{eq: 01 Noe configurations not realizable} and \eqref{eq: 02 Noe configurations not realizable} that
\begin{align}
\frac{1}{C_{21}} &= \alpha, \label{eq: 03 Noe configurations not realizable} \\
\frac{1}{R_{21} C_{21} C_{22}} &= \beta, \label{eq: 04 Noe configurations not realizable} \\
\frac{1}{L_{21} C_{21} C_{22}} &= \gamma,  \label{eq: 05 Noe configurations not realizable}  \\
\frac{C_{21} + C_{22}}{R_{21}C_{21}C_{22}} &= 2p + p_1, \label{eq: 06 Noe configurations not realizable} \\
\frac{1}{L_{21} C_{22}} &= p(p+2p_1), \label{eq: 07 Noe configurations not realizable} \\
\frac{1}{R_{21}L_{21}C_{21}C_{22}} &= p_1p^2. \label{eq: 08 Noe configurations not realizable}
\end{align}
It follows from  \eqref{eq: 03 Noe configurations not realizable} that
\begin{equation} \label{eq: C21 Noe configurations not realizable}
C_{21} = \frac{1}{\alpha}.
\end{equation}
By \eqref{eq: 04 Noe configurations not realizable}, \eqref{eq: 06 Noe configurations not realizable}, and \eqref{eq: C21 Noe configurations not realizable}, one obtains
\begin{equation} \label{eq: C22 Noe configurations not realizable}
C_{22} = \frac{2\alpha p + \alpha p_1 - \beta}{\alpha \beta}.
\end{equation}
By \eqref{eq: 04 Noe configurations not realizable}, \eqref{eq: C21 Noe configurations not realizable}, and \eqref{eq: C22 Noe configurations not realizable}, one obtains
\begin{equation} \label{eq: R21 Noe configurations not realizable}
R_{21} = \frac{\alpha^2}{2\alpha p + \alpha p_1 - \beta}.
\end{equation}
Then, \eqref{eq: 05 Noe configurations not realizable}, \eqref{eq: C21 Noe configurations not realizable}, and \eqref{eq: C22 Noe configurations not realizable} yield
\begin{equation} \label{eq: L21 Noe configurations not realizable}
L_{21} = \frac{\alpha^2 \beta}{\gamma (2\alpha p + \alpha p_1 - \beta)}.
\end{equation}
It follows from \eqref{eq: 07 Noe configurations not realizable}, \eqref{eq: C22 Noe configurations not realizable}, and \eqref{eq: L21 Noe configurations not realizable} that
\begin{equation} \label{eq: Condition01 Noe configurations not realizable}
\alpha p^2 + 2\alpha p_1 p - \gamma = 0.
\end{equation}
Moreover, substituting \eqref{eq: C21 Noe configurations not realizable}--\eqref{eq: L21 Noe configurations not realizable} into \eqref{eq: 08 Noe configurations not realizable} yields
\begin{equation} \label{eq: Condition02 Noe configurations not realizable}
\alpha^2 p_1 p^2 - 2\alpha \gamma p - \gamma (\alpha p_1 - \beta) = 0.
\end{equation}
The assumption that $R_{21} > 0$, $L_{21} > 0$, and $C_{22} > 0$ gives
\begin{equation} \label{eq: Condition03 Noe configurations not realizable}
2\alpha p + \alpha p_1 - \beta > 0.
\end{equation}
By \textcolor[rgb]{0.98,0.00,0.00}{(28)} and \eqref{eq: 02 Noe configurations not realizable}, $Z(s)$ is calculated as
\begin{equation} \label{eq: Z1 + Z2 Noe configurations not realizable}
Z(s) = Z_1(s) + Z_2(s) =
\frac{ms^3 + (2mp+\alpha + q)s^2 + (mp^2 + 2qp + \beta)s + (qp^2 + \gamma)}{(s+p_1)(s+p)^2}.
\end{equation}
Comparing (1) with \eqref{eq: Z1 + Z2 Noe configurations not realizable}, one obtains
\begin{align}
m  &= k, \label{eq: 101 Noe configurations not realizable}  \\
2mp + \alpha + q &= k(p_1 + 2z),  \label{eq: 102 Noe configurations not realizable}   \\
mp^2 + 2q p + \beta &= kz(z+2p_1),  \label{eq: 103 Noe configurations not realizable}   \\
q p^2 + \gamma &= kp_1z^2. \label{eq: 104 Noe configurations not realizable}
\end{align}
Then, \eqref{eq: 101 Noe configurations not realizable} and \eqref{eq: 102 Noe configurations not realizable} yield
\begin{equation}  \label{eq: alpha + q Noe configurations not realizable}
\alpha + q = k(p_1 + 2z - 2p).
\end{equation}
By \eqref{eq: Condition01 Noe configurations not realizable} and \eqref{eq: 104 Noe configurations not realizable}, one obtains
\begin{equation}  \label{eq: alpha + q 02 Noe configurations not realizable}
(p^2 + 2p_1p)\alpha + p^2 q = kp_1z^2.
\end{equation}
By \eqref{eq: alpha + q Noe configurations not realizable} and \eqref{eq: alpha + q 02 Noe configurations not realizable}, one obtains
\begin{equation} \label{eq: alpha Noe configurations not realizable}
\alpha = \frac{k(p-z)(2p^2-p_1p-p_1z)}{2p_1p}
\end{equation}
and
\begin{equation} \label{eq: q Noe configurations not realizable}
q = -\frac{k(2p^3+(3p_1-2z)p^2-(2p_1^2+4p_1z)p+p_1z^2)}{2p_1p}.
\end{equation}
Then, \eqref{eq: 101 Noe configurations not realizable}, \eqref{eq: 103 Noe configurations not realizable}, and \eqref{eq: q Noe configurations not realizable} yield
\begin{equation}  \label{eq: beta Noe configurations not realizable}
\beta = \frac{2k(p-z)(p^2+p_1p-p_1z-p_1^2)}{p_1}.
\end{equation}
It follows from \eqref{eq: 104 Noe configurations not realizable} and \eqref{eq: q Noe configurations not realizable} that
\begin{equation} \label{eq: gamma Noe configurations not realizable}
\gamma = \frac{k(p+2p_1)(p-z)(2p^2-p_1p-p_1z)}{2p_1}.
\end{equation}
Based on \eqref{eq: 101 Noe configurations not realizable} and \eqref{eq: q Noe configurations not realizable},  one implies that
\eqref{eq: 01 N1 00 Z2} is equivalent to
\begin{equation} \label{eq: equivalent Condition01 Noe configurations not realizable}
(p-z)(2p^2 + 3p_1p - p_1z) < 0,
\end{equation}
which implies $p < z$. Since the condition of Theorem~1 does not hold, one only needs to consider to the case of $p \leq z/(2 + \sqrt{5})$. Furthermore,  \eqref{eq: Condition03 Noe configurations not realizable} is equivalent to
\begin{equation} \label{eq: equivalent Condition02 Noe configurations not realizable}
(p-z)(4p^2 - (3p_1 + 2z)p + p_1z) < 0,
\end{equation}
by \eqref{eq: alpha Noe configurations not realizable} and \eqref{eq: beta Noe configurations not realizable}.
Combining \eqref{eq: equivalent Condition01 Noe configurations not realizable} and \eqref{eq: equivalent Condition02 Noe configurations not realizable}, one obtains
\begin{equation*}
\frac{2zp-4p^2}{z-3p} < p_1 < \frac{2p^2}{z-3p},
\end{equation*}
which implies $p > z/3$. Therefore, the condition of Theorem~1 holds.

Therefore, $Z(s) \in \mathcal{Z}_{p2,z2}$  cannot be realized as  in Fig.~2(a), where $N_1$ is the configuration in Fig.~\ref{fig: One-reactive Three-element N1}(a),
and $N_2$ is  the configurations in Fig.~\ref{fig: Three-reactive Four-element N2}(e).
\end{proof}

\begin{lemma} \label{lemma: four-reactive seven-element configurations lemma 1}
If a biquadratic impedance $Z(s) \in \mathcal{Z}_{p2,z2}$  is realizable as  in Fig.~2(a), where $N_1$ is the configuration in Fig.~\ref{fig: One-reactive Three-element N1}(a)
and $N_2$ is one of the configurations in Figs.~\ref{fig: Three-reactive Four-element N2}(b) and \ref{fig: Three-reactive Four-element N2}(f), then the condition of Lemma~1 holds.
\end{lemma}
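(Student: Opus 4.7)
The plan is to mirror the case-by-case computation carried out in the proof of Lemma~\ref{lemma: four-reactive seven-element configurations not realizable}. For each of the two target configurations of $N_2$ (Figs.~\ref{fig: Three-reactive Four-element N2}(b) and \ref{fig: Three-reactive Four-element N2}(f)), I would first compute the symbolic impedance $Z_2(s)$ as a rational function whose coefficients are monomials in the three reactive element values and the single resistance. Because $N_1$ is the one-reactive three-element configuration of Fig.~\ref{fig: One-reactive Three-element N1}(a), its impedance has the bilinear form $Z_1(s) = (ms + q)/(s + p_1)$ with $m, q, p_1 > 0$ and the realizability constraint $q - mp_1 > 0$, exactly as in the preceding lemma.

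Next, I would write $Z(s) = Z_1(s) + Z_2(s)$ over the common denominator $(s + p_1)(s + p)^2$. The form of $Z_2(s)$ dictates which numerator coefficients must vanish (one or more of $\alpha, \beta, \gamma$ will either be absent or tied by a relation depending on which configuration of Fig.~\ref{fig: Three-reactive Four-element N2} is in play), so matching with the expansion of $k(s+z)^2(s+p_1)$ yields an over-determined linear system. Solving gives closed-form expressions for $\alpha, \beta, \gamma$ together with $q$ in terms of $k, z, p, p_1$, in the same spirit as equations~\eqref{eq: alpha Noe configurations not realizable}--\eqref{eq: gamma Noe configurations not realizable} of the previous lemma. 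One then substitutes these into the element-value relations obtained from the coefficient-by-coefficient match of $Z_2(s)$ with its symbolic form, thereby eliminating the element values and producing one or two polynomial constraints between $p, z, p_1$.

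The positivity conditions $m, q, p_1 > 0$, $q - mp_1 > 0$, and positivity of the four elements of $N_2$ then reduce the problem to feasibility of a system of polynomial inequalities in $(p, z, p_1)$. I would mimic the elimination step at the end of the proof of Lemma~\ref{lemma: four-reactive seven-element configurations not realizable} (where two inequalities in $p_1$ combined to force $p > z/3$): bracket $p_1$ between explicit rational functions of $p$ and $z$, and conclude that the bracket is nonempty only when $p/z$ lies in the interval asserted by Lemma~1. Cases where the combined constraints reduce to $p = z$ or otherwise contradict $p \neq z$ are discarded, and the remaining intervals are collected across the two configurations.

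The main obstacle I anticipate is the algebraic bulk: since $Z_2(s)$ is cubic in $s$ for both configurations, the coefficient relations couple $p_1$ nonlinearly with $p$ and $z$, and the expressions for $\alpha, \beta, \gamma$ are degree-$3$ or higher rational functions of $p_1$. A clean case split on the sign of $p - z$ and on the position of $p/z$ with respect to the critical thresholds $1/(2+\sqrt{5})$, $1/3$, $3$, and $2+\sqrt{5}$ will likely be needed, and the positivity discriminants of the quadratics in $p_1$ will have to be analyzed carefully. I expect no essentially new idea beyond what appears in Lemma~\ref{lemma: four-reactive seven-element configurations not realizable}; the work lies in the bookkeeping of signs and the recognition that each feasible interval for $p/z$ is precisely one of those specified by Lemma~1.
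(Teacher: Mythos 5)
Your plan coincides with the paper's proof: for each of the two configurations the authors compute $Z_2(s)$ symbolically, match the numerator of $Z_1(s)+Z_2(s)$ against $k(s+z)^2(s+p_1)$, eliminate the element values to obtain polynomial constraints in $(p,z,p_1)$, and use the positivity conditions (in particular $q-mp_1>0$ and $m>0$) to force $p/z$ into $\bigl(1/(2+\sqrt{3}),1\bigr)$ for configuration (b) and $(1/3,1)$ for configuration (f), both of which satisfy Lemma~1. The only wrinkle your sketch glosses over is that for configuration (b) the numerator of $Z_2(s)$ is $\alpha s^3+\gamma s$ (it carries an $s^3$ term), so the leading-coefficient match reads $m+\alpha=k$ rather than $m=k$; this emerges automatically once the symbolic impedance is computed, as you propose.
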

\begin{proof}
It is calculated that the impedance of the configuration in Fig.~\ref{fig: Three-reactive Four-element N2}(b) is
\begin{equation} \label{eq: 01 Nob configurations not realizable}
Z_2(s) = \frac{R_{21} L_{21} L_{22} C_{21} s^3 + R_{21} L_{21} s}{L_{21} L_{22} C_{21} s^3 + R_{21} C_{21} (L_{21} + L_{22}) s^2 + L_{21} s + R_{21}}.
\end{equation}
If $Z(s)$ is realizable as  in Fig.~2(a), where $N_1$ is the configuration in Fig.~\ref{fig: One-reactive Three-element N1}(a) and $N_2$ is the configuration in Fig.~\ref{fig: Three-reactive Four-element N2}(b), then the impedance of $N_1$ is in the form of \eqref{eq: Z1 one-reactive three-element N1}, where $m$, $q$, $p_1$ $> 0$  and  \eqref{eq: Realizability of N1 one-reactive three-element}
 holds, and the impedance of $N_2$ is in the form of
\begin{equation}  \label{eq: 02 Nob configurations not realizable}
Z_2(s) = \frac{\alpha s^3 + \gamma s}{(s+p_1)(s+p)^2},
\end{equation}
where $\alpha$, $\gamma$ $> 0$.  Then, it follows from \eqref{eq: 01 Nob configurations not realizable} and \eqref{eq: 02 Nob configurations not realizable} that
\begin{align}
R_{21} &= \alpha, \label{eq: 03 Nob configurations not realizable} \\
\frac{R_{21}}{L_{22} C_{21}} &= \gamma, \label{eq: 04 Nob configurations not realizable} \\
\frac{R_{21} (L_{21} + L_{22})}{L_{21} L_{22}} &= 2p + p_1, \label{eq: 05 Nob configurations not realizable} \\
\frac{1}{L_{22} C_{21}} &= p(p+2p_1), \label{eq: 06 Nob configurations not realizable} \\
\frac{R_{21}}{L_{21} L_{22} C_{21}} &= p_1p^2. \label{eq: 07 Nob configurations not realizable}
\end{align}
By \eqref{eq: 03 Nob configurations not realizable}, \eqref{eq: 06 Nob configurations not realizable}, and \eqref{eq: 07 Nob configurations not realizable}, one obtains
\begin{equation}  \label{eq: L21 Nob configurations not realizable}
L_{21} = \frac{\alpha (p+2p_1)}{p_1p}.
\end{equation}
Then, \eqref{eq: 03 Nob configurations not realizable}, \eqref{eq: 05 Nob configurations not realizable}, and \eqref{eq: L21 Nob configurations not realizable} yield
\begin{equation}  \label{eq: L22 Nob configurations not realizable}
L_{22} = \frac{\alpha (p + 2p_1)}{2(p+p_1)^2}.
\end{equation}
By \eqref{eq: 06 Nob configurations not realizable} and \eqref{eq: L22 Nob configurations not realizable}, one obtains
\begin{equation}  \label{eq: C21 Nob configurations not realizable}
C_{21} = \frac{2(p+p_1)^2}{\alpha p (p + 2p_1)^2}.
\end{equation}
Then, \eqref{eq: 03 Nob configurations not realizable}, \eqref{eq: 04 Nob configurations not realizable}, \eqref{eq: L22 Nob configurations not realizable}, and \eqref{eq: C21 Nob configurations not realizable} yield
\begin{equation}  \label{eq: Condition01 Nob configurations not realizable}
\alpha p^2 + 2\alpha p_1 p - \gamma = 0.
\end{equation}
By \eqref{eq: Z1 one-reactive three-element N1} and \eqref{eq: 02 Nob configurations not realizable}, $Z(s)$ is calculated as
\begin{equation} \label{eq: Z1 + Z2 Nob configurations not realizable}
Z(s) = Z_1(s) + Z_2(s) = \frac{(m+\alpha) s^3 + (2mp + q) s^2 + (mp^2 + 2qp + \gamma) s + q p^2 }{(s+p_1)(s+p)^2}.
\end{equation}
Comparing (1) with \eqref{eq: Z1 + Z2 Nob configurations not realizable}, one obtains \begin{align}
m + \alpha &= k, \label{eq: 101 Nob configurations not realizable}  \\
2mp + q &= k(p_1 + 2z),  \label{eq: 102 Nob configurations not realizable}   \\
mp^2 + 2qp + \gamma &= kz(z+2p_1),  \label{eq: 103 Nob configurations not realizable}   \\
qp^2 &= kp_1z^2. \label{eq: 104 Nob configurations not realizable}
\end{align}
By \eqref{eq: 104 Nob configurations not realizable}, one obtains
\begin{equation}  \label{eq: q Nob configurations not realizable}
q = \frac{kp_1z^2}{p^2}.
\end{equation}
Then, \eqref{eq: 102 Nob configurations not realizable} and \eqref{eq: q Nob configurations not realizable} yield
\begin{equation}  \label{eq: m Nob configurations not realizable}
m = \frac{k(p_1p^2 + 2zp^2 - p_1z^2)}{2p^3}.
\end{equation}
By \eqref{eq: 101 Nob configurations not realizable} and \eqref{eq: m Nob configurations not realizable}, one obtains
\begin{equation} \label{eq: alpha Nob configurations not realizable}
\alpha = \frac{k(p-z)(2p^2 -  p_1 p - p_1 z)}{2p^3}.
\end{equation}
It follows from \eqref{eq: 103 Nob configurations not realizable}, \eqref{eq: q Nob configurations not realizable}, and \eqref{eq: m Nob configurations not realizable} that
\begin{equation}  \label{eq: gamma Nob configurations not realizable}
\gamma = \frac{-k(p-z)(p_1p +  2zp - 3zp_1)}{2p}.
\end{equation}
Together with \eqref{eq: q Nob configurations not realizable}--\eqref{eq: gamma Nob configurations not realizable}, condition~\eqref{eq: Realizability of N1 one-reactive three-element}
 is equivalent to
$p < z$, and \eqref{eq: Condition01 Nob configurations not realizable} and
$m > 0$ are equivalent to
\begin{equation}  \label{eq: Equivalent Condition01 Nob configurations not realizable}
(p+z)p_1^2 - 2p(p-z)p_1 - p^2(p+z) = 0
\end{equation}
and
\begin{equation} \label{eq: m positive Nob configurations not realizable}
p_1 < \frac{2zp^2}{z^2 - p^2},
\end{equation}
respectively.
Combining \eqref{eq: Equivalent Condition01 Nob configurations not realizable} and \eqref{eq: m positive Nob configurations not realizable}, one obtains
$z/(2 + \sqrt{3}) < p < z$, which satisfies the condition of Lemma~1.

Therefore, if a biquadratic impedance $Z(s) \in \mathcal{Z}_{p2,z2}$  is realizable as  in Fig.~2(a), where $N_1$ is the configuration in Fig.~\ref{fig: One-reactive Three-element N1}(a)
and $N_2$ is one of the configurations in Figs.~\ref{fig: Three-reactive Four-element N2}(b), then the condition of Lemma~1 holds.

It is calculated that the impedance of the configuration in Fig.~\ref{fig: Three-reactive Four-element N2}(f) is
\begin{equation} \label{eq: 01 Nof configurations not realizable}
Z_2(s) = \frac{R_{21} L_{21} C_{22} s^2 + L_{21} s + R_{21}}{R_{21} L_{21} C_{21} C_{22} s^3 + L_{21} C_{21} s^2 + R_{21} (C_{21} + C_{22}) s + 1}.
\end{equation}
If $Z(s)$ is realizable as  in Fig.~2(a), where $N_1$ is the configuration in Fig.~\ref{fig: One-reactive Three-element N1}(a) and $N_2$ is the configuration in Fig.~\ref{fig: Three-reactive Four-element N2}(f), then the impedance of $N_1$ is in the form of
\eqref{eq: Z1 one-reactive three-element N1}, where $m$, $q$, $p_1$ $> 0$  and \eqref{eq: Realizability of N1 one-reactive three-element} holds, and the impedance of $N_2$ is in the form of
\eqref{eq: 02 Noe configurations not realizable},
where $\alpha$, $\beta$, $\gamma$ $> 0$. Then, it follows from \eqref{eq: 02 Noe configurations not realizable} and \eqref{eq: 01 Nof configurations not realizable}    that
\begin{align}
\frac{1}{C_{21}} &= \alpha, \label{eq: 03 Nof configurations not realizable} \\
\frac{1}{R_{21} C_{21} C_{22}} &= \beta, \label{eq: 04 Nof configurations not realizable} \\
\frac{1}{L_{21} C_{21} C_{22}} &= \gamma,  \label{eq: 05 Nof configurations not realizable}  \\
\frac{1}{R_{21} C_{22}} &= 2p + p_1, \label{eq: 06 Nof configurations not realizable} \\
\frac{C_{21} + C_{22}}{L_{21} C_{21} C_{22}} &= p(p+2p_1), \label{eq: 07 Nof configurations not realizable} \\
\frac{1}{R_{21} L_{21} C_{21} C_{22}} &= p_1p^2. \label{eq: 08 Nof configurations not realizable}
\end{align}
It follows from \eqref{eq: 03 Nof configurations not realizable} that
\begin{equation}  \label{eq: C21 Nof configurations not realizable}
C_{21} = \frac{1}{\alpha}.
\end{equation}
Then, \eqref{eq: 05 Nof configurations not realizable}, \eqref{eq: 07 Nof configurations not realizable}, and \eqref{eq: C21 Nof configurations not realizable} yield
\begin{equation}  \label{eq: C22 Nof configurations not realizable}
C_{22} = \frac{\alpha p^2 + 2\alpha p_1 p - \gamma}{\alpha \gamma}.
\end{equation}
By \eqref{eq: 06 Nof configurations not realizable} and \eqref{eq: C22 Nof configurations not realizable}, one obtains
\begin{equation}  \label{eq: R21 Nof configurations not realizable}
R_{21} = \frac{\alpha \gamma}{(\alpha p^2 + 2\alpha p_1 p - \gamma)(2 p + p_1)}.
\end{equation}
Then, it follows from \eqref{eq: 05 Nof configurations not realizable}, \eqref{eq: C21 Nof configurations not realizable}, and \eqref{eq: C22 Nof configurations not realizable} that
\begin{equation}  \label{eq: L21 Nof configurations not realizable}
L_{21} = \frac{\alpha^2}{\alpha p^2 + 2\alpha p_1 p - \gamma}.
\end{equation}
By \eqref{eq: 04 Nof configurations not realizable} and \eqref{eq: C21 Nof configurations not realizable}--\eqref{eq: R21 Nof configurations not realizable}, one obtains
\begin{equation} \label{eq: Condition01 Nof configurations not realizable}
2 \alpha p - \beta + \alpha p_1 = 0.
\end{equation}
The assumption that $R_{21} > 0$, $L_{21} > 0$, and $C_{22} > 0$ gives
\begin{equation} \label{eq: Condition02 Nof configurations not realizable}
\alpha p^2 + 2\alpha p_1 p - \gamma > 0.
\end{equation}
Then, \eqref{eq: 08 Nof configurations not realizable}--\eqref{eq: L21 Nof configurations not realizable} yield
\begin{equation} \label{eq: Condition03 Nof configurations not realizable}
2\alpha p p_1^2 + (4\alpha p^2 - \gamma) p_1 + 2p(\alpha p^2 - \gamma) = 0.
\end{equation}
By (28) and \eqref{eq: 02 Noe configurations not realizable}, $Z(s)$ is calculated as
\begin{equation} \label{eq: Z1 + Z2 Nof configurations not realizable}
Z(s) = Z_1(s) + Z_2(s) =
\frac{ms^3 + (2mp + \alpha + q)s^2 + (mp^2 + 2qp + \beta)s + (qp^2 + \gamma)}{(s+p_1)(s+p)^2}.
\end{equation}
Comparing (1) with \eqref{eq: Z1 + Z2 Nof configurations not realizable}, one obtains
\begin{align}
m  &= k, \label{eq: 101 Nof configurations not realizable}  \\
2mp + \alpha + q &= k(p_1 + 2z),  \label{eq: 102 Nof configurations not realizable}   \\
mp^2 + 2q p + \beta &= kz(z+2p_1),  \label{eq: 103 Nof configurations not realizable}   \\
q p^2 + \gamma &= kp_1z^2. \label{eq: 104 Nof configurations not realizable}
\end{align}
Then, it follows from \eqref{eq: Condition01 Nof configurations not realizable} and \eqref{eq: 101 Nof configurations not realizable}--\eqref{eq: 103 Nof configurations not realizable} that
\begin{align}
\alpha &= \frac{k(p-z)(3p-z-2p_1)}{p_1},
\label{eq: alpha Nof configurations not realizable}   \\
q &= -\frac{k(3p^2 - 4zp - p_1^2 + z^2)}{p_1}.
\label{eq: q Nof configurations not realizable}
\end{align}
By \eqref{eq: 104 Nof configurations not realizable} and \eqref{eq: q Nof configurations not realizable}, one obtains
\begin{equation} \label{eq: gamma Nof configurations not realizable}
\gamma = \frac{k(p-z)((3p-z)p^2 - (p+z)p_1^2)}{p_1}.
\end{equation}
Then, \eqref{eq: Condition01 Nof configurations not realizable} and \eqref{eq: alpha Nof configurations not realizable} yield
\begin{equation} \label{eq: beta Nof configurations not realizable}
\beta = \frac{k(p-z)(3p-z-2p_1)(2p+p_1)}{p_1}.
\end{equation}
Together with \eqref{eq: 101 Nof configurations not realizable}  and \eqref{eq: q Nof configurations not realizable}, condition~\eqref{eq: Realizability of N1 one-reactive three-element} is equivalent to $z/3 < p < z$,  which satisfies the condition of Lemma~1.

Therefore, if a biquadratic impedance $Z(s) \in \mathcal{Z}_{p2,z2}$  is realizable as  in Fig.~2(a), where $N_1$ is the configuration in Fig.~\ref{fig: One-reactive Three-element N1}(a)
and $N_2$ is one of the configurations in Figs.~\ref{fig: Three-reactive Four-element N2}(f), then the condition of Lemma~1 holds.
\end{proof}

\begin{lemma}   \label{lemma: configuration Nog realizable}
A biquadratic impedance $Z(s) \in \mathcal{Z}_{p2,z2}$ can be realized as the configuration in Fig.~2(a), where $N_1$ is the configuration in Fig.~\ref{fig: One-reactive Three-element N1}(a)
and $N_2$ is the configurations in Figs.~\ref{fig: Three-reactive Four-element N2}(g) (that is, the configuration in Fig.~3(a)), if and only if
\begin{align}
(p-z)(p-3z) &> 0,
\label{lemma: condition01 configuration Nog realizable}  \\
p^4 - 6zp^3 + 6z^2p^2 - 14z^3p + 5z^4 &< 0.
\label{lemma: condition configuration Nog realizable}
\end{align}
\end{lemma}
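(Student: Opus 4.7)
The plan is to mirror the element-value bookkeeping used in Lemmas~\ref{lemma: four-reactive seven-element configurations not realizable} and \ref{lemma: four-reactive seven-element configurations lemma 1}. First, I would compute the impedance $Z_2(s)$ of the configuration in Fig.~\ref{fig: Three-reactive Four-element N2}(g) directly in terms of its four element values, and force it into the canonical form with denominator $(s+p_1)(s+p)^2$ (the shape of the numerator being fixed by the topology of g). For $N_1$ in Fig.~\ref{fig: One-reactive Three-element N1}(a) I take $Z_1(s)=(ms+q)/(s+p_1)$ with $m,q,p_1>0$ and the realizability condition \eqref{eq: Realizability of N1 one-reactive three-element}. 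Matching the impedance of $N_2$ coefficientwise against its element-value expression yields a linear system that I would solve sequentially to obtain each of $R_{21},L_{21},\ldots$ as a rational function of the numerator coefficients $\alpha,\beta,\gamma$ and of $p,p_1$, together with one residual algebraic constraint among $\alpha,\beta,\gamma,p,p_1$ that is intrinsic to the topology of g.

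Next, imposing $Z(s)=Z_1(s)+Z_2(s)=k(s+z)^2/(s+p)^2$ and equating coefficients of the resulting cubic numerator produces four equations that pin down $m,q,\alpha,\beta,\gamma$ as explicit rational functions of $k,p,z,p_1$. Substituting these back into the topology-intrinsic constraint gives a polynomial equation in $p_1$ whose positive real roots are the only candidate values of $p_1$. The realizability problem therefore reduces to the existence of a $p_1>0$ that simultaneously satisfies this polynomial constraint, the inequality $q-mp_1>0$, positivity of $\alpha,\beta,\gamma$, and positivity of every element value.

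Each of these positivity requirements factors, as witnessed by the analogous formulas \eqref{eq: alpha Nof configurations not realizable}--\eqref{eq: beta Nof configurations not realizable}, into expressions linear or quadratic in $p,z,p_1$. I expect the factor $(p-z)(p-3z)$ to emerge as the sign-decisive term for one family of these inequalities, producing \eqref{lemma: condition01 configuration Nog realizable}, while the discriminant-type condition guaranteeing that a compatible real positive $p_1$ actually exists should reduce, after factorization, to the quartic inequality \eqref{lemma: condition configuration Nog realizable}. For the sufficiency direction I would, under both hypotheses, exhibit an explicit admissible $p_1$ (the appropriate root of the quadratic factor arising from the topology constraint) and verify directly that every element value obtained from the closed-form expressions is strictly positive.

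The main obstacle will be the sign bookkeeping during the elimination of $p_1$. In particular, identifying $p^4-6zp^3+6z^2p^2-14z^3p+5z^4$ as precisely the factor controlling the existence of an admissible $p_1$ requires a careful factorization of the polynomial(s) in $p,z,p_1$ produced by substitution, and one must check that none of the previously analysed configurations (Lemmas~\ref{lemma: four-reactive seven-element configurations not realizable} and \ref{lemma: four-reactive seven-element configurations lemma 1}) constrains the solution set in a way that supersedes or invalidates the region defined by \eqref{lemma: condition01 configuration Nog realizable} and \eqref{lemma: condition configuration Nog realizable}.
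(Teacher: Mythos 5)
Your proposal is a legitimate route, but it is genuinely different from what the paper does for this particular lemma. The paper's proof of Lemma~\ref{lemma: configuration Nog realizable} is essentially a citation: the configuration in Fig.~\ref{fig: Three-reactive Four-element N2}(g) combined with Fig.~\ref{fig: One-reactive Three-element N1}(a) is the specific seven-element network whose realizability condition for a general biquadratic is tabulated in \cite[Table~I]{JZ14}, so the authors simply substitute $A=kx$, $B=2kzx$, $C=kz^2x$, $D=x$, $E=2px$, $F=p^2x$ into that table to obtain \eqref{lemma: condition01 configuration Nog realizable}--\eqref{lemma: condition configuration Nog realizable}, and then record the element values (with $p_1$ a positive root of $(3z-p)p_1^2-2p(p-z)p_1+p^2(p-3z)=0$) for sufficiency. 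Your plan instead redoes the elimination from scratch --- coefficient matching for $Z_2(s)$, the four equations from $Z_1+Z_2=k(s+z)^2/(s+p)^2$, a residual topology constraint in $p_1$, and positivity bookkeeping --- which is exactly the machinery the paper deploys for the sibling configurations (a)--(f) and (h) in Lemmas~\ref{lemma: four-reactive seven-element configurations not realizable}--\ref{lemma: configuration Noh realizable}. What your route buys is self-containedness and an if-and-only-if argument that does not lean on an external table; what the paper's route buys is brevity. Since you would have to execute the full computation anyway, your approach is the more informative one, and it is clearly feasible given that the analogous computation for configuration (h) is carried out in full in the paper.

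One concrete caveat on your predicted mechanism: you expect the quartic \eqref{lemma: condition configuration Nog realizable} to emerge as a ``discriminant-type condition'' for the existence of a real positive $p_1$. That is unlikely to be where it comes from. The topology constraint is the quadratic $(3z-p)p_1^2-2p(p-z)p_1+p^2(p-3z)=0$, whose discriminant equals $4p^2\bigl[(p-z)^2+(p-3z)^2\bigr]>0$ and whose product of roots is $-p^2<0$; hence a unique positive root $p_1$ exists for every $p\neq 3z$, unconditionally. The quartic must therefore arise from substituting that positive root into the remaining positivity requirements ($q-mp_1>0$, $\alpha,\beta,\gamma>0$, and positivity of the element values), not from root existence. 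Your elimination strategy will still surface the correct factor, but only if you carry the inequalities through the substitution rather than stopping at the discriminant. Also, your closing worry that the regions from Lemmas~\ref{lemma: four-reactive seven-element configurations not realizable} and \ref{lemma: four-reactive seven-element configurations lemma 1} might ``supersede'' this one is unfounded: each lemma characterizes realizability for its own configuration independently, and no cross-configuration consistency check is needed here.
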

\begin{proof}
The realizability condition of a general biquadratic impedance $Z(s)$ in the form of \eqref{eq: general biquadratic impedances} as a configuration that is equivalent to Fig.~3(a) is available in \cite[Table~I]{JZ14}. Letting $A = kx$, $B = 2kzx$, $C = k z^2x$, $D = x$, $E = 2px$, and $F = p^2x$ for  $x > 0$, the realizability condition for such a specific biquadratic impedance can be derived.

The element values can be derived as $R_1 = q/p_1$, $R_2 = mq/(q-mp_1)$, $C_1 = (q - mp_1)/q^2$, $R_{21} = \alpha$, $L_{21} = \alpha/(2p + p_1)$, $L_{22} = \alpha \beta/\gamma$, and $C_{21} = 1/\beta$, where $\alpha = k(p-z)(2p+p_1)(p^2+zp-2zp_1)/(2p^4)$, $\beta = (2k(p-z)(-zp_1^2+p(p-z)p_1+zp^2))/p^3$, $\gamma = (kp_1(p-z)(p^2+zp-2zp_1))/(2p^2)$, and $p_1$ is a positive root of $(3z-p)p_1^2 - 2p(p-z)p_1 + p^2(p-3z) = 0$.
\end{proof}

\begin{lemma}  \label{lemma: configuration Noh realizable}
If a biquadratic impedance $Z(s) \in \mathcal{Z}_{p2,z2}$ can be realized as  in Fig.~2(a), where $N_1$ is the configuration in Fig.~\ref{fig: One-reactive Three-element N1}(a) and $N_2$ is the configuration in Fig.~\ref{fig: Three-reactive Four-element N2}(h), then the condition of Lemma~\ref{lemma: configuration Nog realizable} holds.
\end{lemma}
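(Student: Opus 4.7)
The plan is to mirror the coefficient-matching template used in Lemmas~\ref{lemma: four-reactive seven-element configurations not realizable} and \ref{lemma: four-reactive seven-element configurations lemma 1}, specialized to the configuration of Fig.~\ref{fig: Three-reactive Four-element N2}(h). First, I would compute $Z_2(s)$ directly from the element impedances of Fig.~\ref{fig: Three-reactive Four-element N2}(h), obtaining it as a ratio of a cubic numerator to a cubic denominator in $s$. Since $N_1$ is the configuration in Fig.~\ref{fig: One-reactive Three-element N1}(a), its impedance takes the form
\begin{equation*}
Z_1(s) = \frac{ms + q}{s + p_1},\qquad m,\,q,\,p_1>0,\quad q - m p_1 > 0,
\end{equation*}
and from $Z(s)=Z_1(s)+Z_2(s)=k(s+z)^2/(s+p)^2$ pole cancellation forces
\begin{equation*}
Z_2(s)=\frac{\alpha s^2+\beta s+\gamma}{(s+p_1)(s+p)^2},
\end{equation*}
where the sign pattern of $(\alpha,\beta,\gamma)$ is dictated by the structural form of $N_2$.

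Second, I would identify the four element values $R_{2i},L_{2i},C_{2i}$ by matching the intrinsic expression of $Z_2(s)$ from Step~1 against the target form above; this yields five equations in four element values plus the auxiliary unknowns $\alpha,\beta,\gamma,p_1$, and eliminating the element values produces one algebraic constraint among $\alpha,\beta,\gamma,p,p_1$, analogous to the constraints labelled ``Condition01'' in the previous lemmas.  Third, I would equate coefficients in $Z_1(s)+Z_2(s)=k(s+z)^2/(s+p)^2$ to solve $m,q,\alpha,\beta,\gamma$ as rational functions of $k,z,p,p_1$.  Fourth, substituting these rational expressions into the structural constraint from Step~2 produces a single polynomial equation in $p_1$ with coefficients in $z,p$, alongside strict positivity inequalities $m,q,\alpha,\beta,\gamma,p_1>0$ and $q-mp_1>0$.

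The final step is to eliminate $p_1$ from this polynomial-plus-inequality system and show that feasibility forces exactly the conditions
\begin{equation*}
(p-z)(p-3z) > 0,\qquad p^4 - 6zp^3 + 6z^2p^2 - 14z^3p + 5z^4 < 0
\end{equation*}
of Lemma~\ref{lemma: configuration Nog realizable}.  I expect the polynomial in $p_1$ to be a quadratic closely related to $(3z-p)p_1^2 - 2p(p-z)p_1 + p^2(p-3z)=0$ of Lemma~\ref{lemma: configuration Nog realizable}, and the sign conditions on $\alpha,\beta,\gamma$ to combine with the existence of a positive root $p_1$ to yield both displayed conditions.  A useful shortcut, if available, is to observe that Figs.~\ref{fig: Three-reactive Four-element N2}(g) and (h) differ only by an interchange of reactive-element types within the same underlying graph, so the polynomial systems may be interchanged by a reparametrization that preserves the conclusion; if so one can appeal to the structural symmetry rather than redo the full elimination.

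The main obstacle will be the last step: the elimination of $p_1$ from a polynomial equation coupled to several strict inequalities, and showing that the projection onto $(z,p)$-space is exactly the region defined by (33) and (34).  As in the preceding lemmas the algebra is sensitive to sign branches — one must check each positive root of the polynomial in $p_1$ separately and rule out solutions where any of $\alpha,\beta,\gamma$ or $q-mp_1$ turns non-positive.  The quartic condition will most naturally appear as (a positive multiple of) the discriminant of a quadratic in $p_1$ or as the resultant eliminating $p_1$ between two constraints, while $(p-z)(p-3z)>0$ will arise from the requirement that the leading coefficient and constant term of that quadratic have compatible signs for a positive root to exist and simultaneously keep $\alpha,\beta,\gamma$ positive.
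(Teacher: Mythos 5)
Your plan follows essentially the same route as the paper's proof: compute the intrinsic impedance of the configuration in Fig.~\ref{fig: Three-reactive Four-element N2}(h), take $Z_1(s)=(ms+q)/(s+p_1)$ with $m,q,p_1>0$ and $q-mp_1>0$, eliminate the four element values to obtain algebraic constraints among $\alpha,\beta,\gamma,p,p_1$, solve the coefficient-matching equations for $m,q,\alpha,\beta,\gamma$ as rational functions of $k,z,p,p_1$, and then analyze the resulting polynomial-plus-inequality system in $p_1$. Two points need care, however. First, your displayed target form for $Z_2(s)$ has a quadratic numerator, but configuration~\ref{fig: Three-reactive Four-element N2}(h) forces $Z_2(0)=0$, so the correct form is $Z_2(s)=s(\alpha s^2+\beta s+\gamma)/\bigl((s+p_1)(s+p)^2\bigr)$; this changes the coefficient matching (the leading coefficient gives $m+\alpha=k$ rather than $m=k$, and the constant term gives $qp^2=kp_1z^2$), and with the form you wrote the elimination would not reproduce the paper's constraints. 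In the paper the structural constraint collapses to the quadratic $(5z-3p)p_1^2+(p-3z)p^2=0$ (no linear term, so not the quadratic of Lemma~\ref{lemma: configuration Nog realizable} you anticipated), the condition $(p-z)(p-3z)>0$ comes from $q-mp_1>0$, and the quartic inequality comes from $m>0$ combined with that quadratic rather than from a discriminant. Second, the proposed shortcut via the structural symmetry between Figs.~\ref{fig: Three-reactive Four-element N2}(g) and \ref{fig: Three-reactive Four-element N2}(h) does not work: interchanging reactive-element types is the frequency-inversion map $p\mapsto p^{-1}$, $z\mapsto z^{-1}$ (and it would also invert $N_1$), and the condition of Lemma~\ref{lemma: configuration Nog realizable} is not invariant under it --- for instance $(p-z)(p-3z)>0$ maps to $(p-z)(3p-z)>0$ --- so the full elimination for configuration (h) must be carried out separately, as the paper does.
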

\begin{proof}
By calculation,
the impedance of the configuration in
Fig.~\ref{fig: Three-reactive Four-element N2}(h) is obtained as
\begin{equation} \label{eq: 01 Noh configurations not realizable}
Z_2(s) =  \frac{s(R_{21}L_{21}L_{22}C_{21} s^2 + L_{21} L_{22} s + R_{21} L_{21})}{L_{21} L_{22} C_{21} s^3 + R_{21} (L_{21} + L_{22})C_{21} s^2 + L_{22} s + R_{21}}.
\end{equation}
If $Z(s)$ is realizable as in Fig.~2(a), where $N_1$ is the configuration in Fig.~\ref{fig: One-reactive Three-element N1}(a) and $N_2$ is the configuration in Fig.~\ref{fig: Three-reactive Four-element N2}(h), then the impedance of $N_1$ is in the form of \eqref{eq: Z1 one-reactive three-element N1}, where $m$, $q$, $p_1$ $> 0$  and \eqref{eq: Realizability of N1 one-reactive three-element} holds, and moreover the impedance of $N_2$ is in the form of
\begin{equation}  \label{eq: 02 Noh configurations not realizable}
Z_2(s) = \frac{s(\alpha s^2 + \beta s + \gamma)}{(s+p_1)(s+p)^2},
\end{equation}
where $\alpha$, $\beta$, $\gamma$ $> 0$.  Consequently, it follows from \eqref{eq: 01 Noh configurations not realizable} and
\eqref{eq: 02 Noh configurations not realizable} that
\begin{align}
R_{21} &= \alpha, \label{eq: 03 Noh configurations not realizable} \\
\frac{1}{C_{21}} &= \beta, \label{eq: 04 Noh configurations not realizable} \\
\frac{R_{21}}{L_{22} C_{21}} &= \gamma,  \label{eq: 05 Noh configurations not realizable}  \\
\frac{R_{21} (L_{21} + L_{22})}{L_{21} L_{22}} &= 2p + p_1, \label{eq: 06 Noh configurations not realizable} \\
\frac{1}{L_{21} C_{21}} &= p(p+2p_1), \label{eq: 07 Noh configurations not realizable} \\
\frac{R_{21}}{L_{21} L_{22} C_{21}} &= p_1p^2. \label{eq: 08 Noh configurations not realizable}
\end{align}
Thus, \eqref{eq: 04 Noh configurations not realizable} yields
\begin{equation}  \label{eq: C21 Noh configurations not realizable}
C_{21} = \frac{1}{\beta}.
\end{equation}
By \eqref{eq: 05 Noh configurations not realizable} and \eqref{eq: 08 Noh configurations not realizable}, one obtains
\begin{equation}  \label{eq: L21 Noh configurations not realizable}
L_{21} = \frac{\gamma}{p_1p^2}.
\end{equation}
It follows from \eqref{eq: 03 Noh configurations not realizable}, \eqref{eq: 05 Noh configurations not realizable}, and \eqref{eq: C21 Noh configurations not realizable} that
\begin{equation}  \label{eq: L22 Noh configurations not realizable}
L_{22} = \frac{\alpha \beta}{\gamma}.
\end{equation}
By \eqref{eq: 03 Noh configurations not realizable}, \eqref{eq: 06 Noh configurations not realizable}, \eqref{eq: L21 Noh configurations not realizable}, and \eqref{eq: L22 Noh configurations not realizable}, one obtains
\begin{equation} \label{eq: Condition01 Noh configurations not realizable}
(\alpha p^2 - \gamma)\beta p_1 + \gamma(\gamma - 2\beta p) = 0.
\end{equation}
It follows from \eqref{eq: 07 Noh configurations not realizable}, \eqref{eq: C21 Noh configurations not realizable}, and \eqref{eq: L21 Noh configurations not realizable} that
\begin{equation} \label{eq: Condition02 Noh configurations not realizable}
(\beta p - 2\gamma)p_1 - \gamma p = 0.
\end{equation}
Based on \eqref{eq: Z1 one-reactive three-element N1} and \eqref{eq: 02 Noh configurations not realizable}, calculation yields
\begin{equation} \label{eq: Z1 + Z2 Noh configurations not realizable}
\begin{split}
&Z(s) = Z_1(s) + Z_2(s) =  \\
&\frac{(m+\alpha)s^3 + (2mp+\beta+q)s^2 + (mp^2 + 2qp + \gamma)s + qp^2}{s^3 + (2p+p_1)s^2 + p(p+2p_1)s + p_1p^2}.
\end{split}
\end{equation}
Comparing (1) with \eqref{eq: Z1 + Z2 Noh configurations not realizable}, one obtains
\begin{align}
m + \alpha  &= k, \label{eq: 101 Noh configurations not realizable}  \\
2mp + \beta + q &= k(p_1 + 2z),  \label{eq: 102 Noh configurations not realizable}   \\
mp^2 + 2qp + \gamma &= kz(z+2p_1),  \label{eq: 103 Noh configurations not realizable}   \\
q p^2 &= kp_1z^2. \label{eq: 104 Noh configurations not realizable}
\end{align}
It follows from \eqref{eq: 104 Noh configurations not realizable} that \begin{equation}  \label{eq: q Noh configurations not realizable}
q = \frac{kz^2p_1}{p^2}.
\end{equation}
Thus, \eqref{eq: Condition02 Noh configurations not realizable}, \eqref{eq: 102 Noh configurations not realizable}, \eqref{eq: 103 Noh configurations not realizable}, and \eqref{eq: q Noh configurations not realizable} together yield
\begin{equation}  \label{eq: beta Noh configurations not realizable}
\beta = \frac{k(p-z)(p+2p_1)((p-3z)p_1+2zp)}{p^3},
\end{equation}
and
\begin{equation}  \label{eq: m Noh configurations not realizable}
m = \frac{-k((p-z)(p-3z)p_1^2-z^2p^2)}{p^4}.
\end{equation}
By \eqref{eq: 101 Noh configurations not realizable} and
\eqref{eq: m Noh configurations not realizable}, one obtains
\begin{equation}  \label{eq: alpha Noh configurations not realizable}
\alpha = \frac{k(p-z)((p-3z)p_1^2+p^2(p+z))}{p^4}.
\end{equation}
It follows from \eqref{eq: Condition02 Noh configurations not realizable} and \eqref{eq: beta Noh configurations not realizable} that
\begin{equation}  \label{eq: gamma Noh configurations not realizable}
\gamma = \frac{kp_1(p-z)((p-3z)p_1+2zp)}{p^2}.
\end{equation}
Together with \eqref{eq: q Noh configurations not realizable} and \eqref{eq: m Noh configurations not realizable},
condition~\eqref{eq: Realizability of N1 one-reactive three-element} is equivalent to \eqref{lemma: condition01 configuration Nog realizable}. Together with \eqref{eq: beta Noh configurations not realizable}, \eqref{eq: alpha Noh configurations not realizable}, and \eqref{eq: gamma Noh configurations not realizable},
condition~\eqref{eq: Condition01 Noh configurations not realizable} is equivalent to
\begin{equation} \label{eq: equivalent Condition01 Noh configurations not realizable}
(5z-3p)p_1^2 + (p-3z)p^2 = 0.
\end{equation}
From \eqref{eq: m Noh configurations not realizable}, it follows that $m > 0$ is equivalent to $p_1^2 < z^2p^2/((p-z)(p-3z))$, which, in turn, is   equivalent to $p^2 - 5zp + 2z^2 < 0$, together with \eqref{lemma: condition01 configuration Nog realizable} and \eqref{eq: equivalent Condition01 Noh configurations not realizable}. Therefore, the condition of Lemma~\ref{lemma: configuration Nog realizable} must hold.
\end{proof}

\section{Supplementary Lemmas of Five-Reactive Seven-Element Series-Parallel Realizations for the Proof of  Lemma~4}

\begin{lemma}
\label{lemma: Possible configurations of N1 and N2 two-reactive three-element and N2 three-reactive four-element}
{Consider the five-reactive  seven-element series-parallel network in  Fig.~2(a),  realizing a biquadratic impedance $Z(s)$ in the form of \eqref{eq: general biquadratic impedances} with $A$, $B$, $C$, $D$, $E$, $F$ $> 0$, where $N_1$ is a three-element series-parallel network and $N_2$ is a  four-element series-parallel network. If $Z(s)$ cannot be realized as a series-parallel network containing fewer than seven elements,  then $N_1$ is one of the configurations in Figs.~\ref{fig: Two-reactive configurations N1}(b)--\ref{fig: Two-reactive configurations N1}(d) and $N_2$ will be equivalent to one of the configurations in Fig.~\ref{fig: Three-reactive Four-element N2}.}
\end{lemma}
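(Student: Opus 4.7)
The plan is to fix the reactive-to-resistive split between $N_1$ and $N_2$ by a counting argument, invoke the structural enumerations already in the excerpt, and finally exclude the one residual configuration for $N_1$ by explicit case analysis.

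With $r_1+r_2=5$, $r_1\le 3$, and $r_2\le 4$, the possible splits of reactive elements are $(r_1,r_2)\in\{(1,4),(2,3),(3,2)\}$. By \cite[Lemma~2]{WCH14}, $Z(s)$ cannot be realized as a series connection in which one subnetwork consists entirely of reactive elements, which rules out $(3,2)$ and $(1,4)$. Hence $(r_1,r_2)=(2,3)$, so $N_1$ is a two-reactive three-element series-parallel network (two reactive elements plus one resistor), and $N_2$ is a three-reactive four-element series-parallel network (three reactive elements plus one resistor).

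By Lemma~\ref{lemma: Possible configurations of N1}, $N_1$ is a priori equivalent to one of the configurations in Figs.~\ref{fig: Two-reactive configurations N1}(a)--(d). For $N_2$, I would reproduce the enumeration used in Lemma~\ref{lemma: Possible configurations of N1 one-reactive three-element and N2 three-reactive four-element}: list all four-element network graphs (Fig.~\ref{fig: Four-element graphs}), discard those containing a cut-set formed by a single species of reactive element using \cite[Lemma~1]{WCH14}, and merge equivalent topologies via \cite[Lemma~11]{JS11}. The survivors are exactly those in Fig.~\ref{fig: Three-reactive Four-element N2}.

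It remains to exclude Fig.~\ref{fig: Two-reactive configurations N1}(a), which is $R_1\|L_1\|C_1$ with impedance $Z_1(s)=R_1L_1s/(R_1L_1C_1s^2+L_1s+R_1)$. Since $Z_1(0)=Z_1(\infty)=0$, the identity $Z_1+Z_2=k(s+z)^2/(s+p)^2$ forces $Z_2(0)=kz^2/p^2$ and $Z_2(\infty)=k$, tying down the constant and leading coefficients of $Z_2$. For each of the eight candidates for $N_2$ from Fig.~\ref{fig: Three-reactive Four-element N2}, I would write $Z_2$ as an explicit rational function of its element values, equate coefficients in $s$ with those prescribed by $k(s+z)^2/(s+p)^2-Z_1$, and show in each case that the resulting system either produces a non-positive element value (violating passivity), forces $p=z$ (contradicting the standing hypothesis), or collapses to a realization with fewer than seven elements (contradicting the minimality hypothesis). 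The main obstacle is the volume of algebra, since each of the eight $N_2$ candidates produces a cubic-over-cubic rational form whose coefficients must be reconciled with the fixed form of $Z_1$; however, the arguments should parallel the template of Lemmas~\ref{lemma: four-reactive seven-element configurations not realizable}--\ref{lemma: configuration Noh realizable}, with significant simplification from the vanishing of $Z_1$ at both $0$ and $\infty$ eliminating free parameters at the outset.
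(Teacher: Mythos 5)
Your first two steps coincide with the paper's own proof: the paper likewise uses \cite[Lemma~2]{WCH14} to pin the reactive split to $(2,3)$, and then obtains the admissible configurations by enumerating the graphs of Figs.~\ref{fig: Three-element graphs} and \ref{fig: Four-element graphs} under the cut-set condition of \cite[Lemma~1]{WCH14} and the equivalences of \cite[Lemma~11]{JS11}. The divergence is in how Fig.~\ref{fig: Two-reactive configurations N1}(a) is removed. The paper folds this into ``the method of enumeration'' without further comment; you rightly notice that the cut-set filter by itself does not remove it (it survives in the four-reactive Lemma~\ref{lemma: Possible configurations of N1}) and instead propose an eight-fold coefficient-matching computation over the candidates for $N_2$. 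As written, that computation is only a plan with its outcome asserted, so this step is not yet a proof; it is also far more work than needed, because the observation you already record does the whole job. Since $Z_1(0)=Z_1(\infty)=0$ for $R_1\|L_1\|C_1$, one must have $Z_2(0)=kz^2/p^2$ and $Z_2(\infty)=k$, two values that are finite, positive and, because $p\neq z$, distinct. But in the five-reactive case $N_2$ contains exactly one resistor $R_{21}$, so replacing inductors by shorts and capacitors by opens (resp.\ the reverse) shows $Z_2(0),Z_2(\infty)\in\{0,R_{21},\infty\}$; both being finite and positive forces $Z_2(0)=Z_2(\infty)=R_{21}$, hence $p=z$, a contradiction. (Equivalently: every impedance displayed for the configurations of Fig.~\ref{fig: Three-reactive Four-element N2}, e.g.\ \eqref{eq: 01 Noa configurations not realizable} or \eqref{eq: 01 Noh configurations not realizable}, has numerator of degree at most two or with vanishing constant term, so $Z_2(\infty)=0$ or $Z_2(0)=0$.) This one-line argument replaces all eight algebraic cases, explains why configuration (a) is admissible in the four-reactive setting---there $N_2$ has two resistors, so $Z_2(0)\neq Z_2(\infty)$ is possible---and is presumably what the paper's terse enumeration tacitly relies on.
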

\begin{proof}
By \cite[Lemma~2]{WCH14}, $Z(s)$ cannot be realized as the series connection of two networks, one of which contains only reactive elements.
Therefore, $N_1$ can only contain two reactive elements.
For any realization of $Z(s)$,
there is no cut-set $\mathcal{C}(a,a')$ corresponding to one kind of reactive elements, where $a$ and $a'$ denote two terminals,
by \cite[Lemma~1]{WCH14}.
The possible network graphs for subnetworks $N_1$ and $N_2$ are listed  in Figs.~\ref{fig: Three-element graphs} and \ref{fig: Four-element graphs}, respectively. Based on the method of enumeration and the equivalence in \cite[Lemma~11]{JS11}, $N_1$ is one of the configurations in Figs.~\ref{fig: Two-reactive configurations N1}(b)--\ref{fig: Two-reactive configurations N1}(d) and $N_2$ can be equivalent to one of the configurations in Fig.~\ref{fig: Three-reactive Four-element N2}.
\end{proof}

\begin{lemma} \label{lemma: configurations not realizable}
A biquadratic impedance $Z(s) \in \mathcal{Z}_{p2,z2}$
not satisfying the condition of Lemma~3
cannot be realized as  in Fig.~2(a), where $N_1$ is the configuration in Fig.~\ref{fig: Two-reactive configurations N1}(b) and $N_2$ is one of the configurations in Figs.~\ref{fig: Three-reactive Four-element N2}(a)--\ref{fig: Three-reactive Four-element N2}(d), \ref{fig: Three-reactive Four-element N2}(f), and  \ref{fig: Three-reactive Four-element N2}(h).
\end{lemma}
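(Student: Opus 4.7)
The plan is to treat each of the six candidate configurations of $N_2$ separately, mirroring the strategy used in the proofs of Lemmas~\ref{lemma: 02 N1} and \ref{lemma: four-reactive seven-element configurations not realizable}. Since $N_1$ is the configuration in Fig.~\ref{fig: Two-reactive configurations N1}(b), whose impedance is given by \eqref{eq: Z1 b configuration}, and $N_2$ contains three reactive elements, $Z_2(s)$ is a genuine cubic rational function (by Theorem~\ref{theorem: fewer than four elements} together with the assumption that no realization with fewer elements exists). The target pole structure $(s+p)^2$ of $Z(s)$ then forces $Z_1(s)$ to have denominator $(s+p)^2$ and $Z_2(s)$ to have denominator $(s+p_1)(s+p)^2$ for some auxiliary $p_1>0$. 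Exactly as in the proof of Lemma~\ref{lemma: 02 N1}, this reduction gives
\[
Z_1(s) = \frac{m(s^2+p^2)}{(s+p)^2},\qquad m>0,
\]
and $Z_2(s) = (\alpha s^2 + \beta s + \gamma)/((s+p_1)(s+p)^2)$ with $\alpha,\beta,\gamma\geq 0$, where the coefficients that must vanish depend on which configuration of Fig.~\ref{fig: Three-reactive Four-element N2} is used (e.g.\ $\beta=0$ for (a), $\gamma=0$ for (b)--(d), and $\alpha,\beta,\gamma>0$ for (f) and (h)).

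For each of the six configurations I would proceed in four steps. First, write the symbolic $Z_2(s)$ from the circuit --- these are already assembled as \eqref{eq: 01 Noa configurations not realizable}, \eqref{eq: 01 Noc configurations not realizable}, \eqref{eq: 01 Nod configurations not realizable}, \eqref{eq: 01 Nof configurations not realizable}, and \eqref{eq: 01 Noh configurations not realizable} for configurations (a), (c), (d), (f), (h), with the analogous formula for (b) obtained directly from its series-parallel form. Second, match the circuit expression coefficient-by-coefficient to $(\alpha s^2+\beta s+\gamma)/((s+p_1)(s+p)^2)$, solving for the element values as rational functions of $(\alpha,\beta,\gamma,p,p_1)$ and producing one or two algebraic consistency constraints, in the same style as \eqref{eq: Condition01 Noa configurations not realizable}--\eqref{eq: Condition02 Noa configurations not realizable} and \eqref{eq: Condition01 Noh configurations not realizable}--\eqref{eq: Condition02 Noh configurations not realizable}.

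Third, equating $Z_1(s)+Z_2(s)$ with $k(s+z)^2/(s+p)^2$ and clearing the factor $(s+p_1)$ yields a polynomial identity in $s$ whose four coefficient equations determine $m,\alpha,\beta,\gamma$ uniquely as rational functions of $k,p,z,p_1$. Fourth, substitute these formulas back into the consistency constraints from step two, as well as into the positivity inequalities $m,\alpha,\beta,\gamma>0$. For each configuration this reduces the problem to a single polynomial system in the unknown $p_1$, parameterised by $p$ and $z$. Under the standing assumption that the condition of Lemma~3 fails, I expect the feasible region for $p_1$ to be empty, which would complete the proof for that configuration.

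The main obstacle will be handling the quadratic (and in some places quartic) equations in $p_1$ that arise in step four: the discriminants typically introduce radicals in $p,z$, and one must show that neither branch is admissible once all positivity constraints are imposed. As in the proof of Lemma~\ref{lemma: 02 N1}, I would rely on squaring-and-factoring identities of the form $(\cdot)^2-(\cdot)^2 = -(p+z)(\cdots)(p-z)^{k}$ and systematic sign analysis over the excluded ranges of $p/z$. Where applicable, the principle of frequency inversion from Section~III should pair configurations within Fig.~\ref{fig: Three-reactive Four-element N2} so that only roughly half of the six cases require independent algebraic manipulation.
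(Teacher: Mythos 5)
Your overall strategy (case-by-case treatment of the six configurations, coefficient matching against the circuit impedances, consistency constraints from the element values, and pairing of cases via frequency-inverse duality so that (a)$\leftrightarrow$(b), (c)$\leftrightarrow$(d), (f)$\leftrightarrow$(h) need not all be done independently) is exactly the paper's. But your central ansatz is wrong, and everything downstream of it would compute the wrong system. You claim that, ``exactly as in the proof of Lemma~\ref{lemma: 02 N1},'' one gets $Z_1(s) = m(s^2+p^2)/(s+p)^2$. That reduction is valid only in the \emph{four}-reactive setting of Lemma~\ref{lemma: 02 N1}, where $Z_2$ has degree two and both denominators must be $(s+p)^2$. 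Here $N_2$ has three reactive elements, so (since by hypothesis no fewer than five reactive elements suffice) $Z_2$ has degree exactly three with an irreducible denominator $(s+p_1)(s+p)^2$, $p_1\neq p$. The simple pole at $-p_1$ of $Z_2$ cannot cancel against the numerator of $Z_2$ (that would drop its degree), so it must cancel in the sum $Z_1+Z_2$ against a simple pole of $Z_1$ at the same location. Hence the denominator of $Z_1$ is forced to be $(s+p_1)(s+p)$, not $(s+p)^2$, and from the structure of the configuration in Fig.~\ref{fig: Two-reactive configurations N1}(b) one gets $Z_1(s)=m(s^2+p p_1)/((s+p_1)(s+p))$. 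With your form, $Z_1+Z_2$ would retain an uncancellable pole at $-p_1$ and could never equal $k(s+z)^2/(s+p)^2$; your setup is internally inconsistent before the algebra even starts. The correct coefficient matching is against $k(s+z)^2(s+p_1)$ over $(s+p_1)(s+p)^2$, which is what produces, e.g., $p_1=z^2/(p-2z)$ and then $\gamma<0$ for configuration (a) --- the short contradiction the paper actually uses.

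A secondary slip: you assign numerator patterns ``$\beta=0$ for (a), $\gamma=0$ for (b)--(d).'' Configuration (b) has numerator $\alpha s^3+\gamma s$ (a genuine cubic, the frequency-inverse-dual partner of (a)'s $\alpha s^2+\gamma$), not a quadratic with $\gamma=0$; only (c) and (d) have the form $\alpha s^2+\beta s$. Finally, be aware that for configurations (f) (and its partner (h)) the endgame is not a discriminant/sign analysis as in Lemma~\ref{lemma: 02 N1}: one ends up with two polynomial equations in the auxiliary root $p_1$, and the argument goes through the resultant in $p_1$, showing that a common root can exist only when a high-degree polynomial relation between $p$ and $z$ holds in a range that would force the condition of Lemma~3 --- contradicting the standing assumption. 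Your plan as written would not produce that step.
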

\begin{proof}
It has been shown that the impedance of the configuration in Fig.~\ref{fig: Two-reactive configurations N1}(b) is in the form of
\begin{equation} \label{eq: Z1 b configuration}
Z_1(s) = \frac{R_1L_1C_1 s^2 + R_1}{L_1C_1 s^2 + R_1C_1 s + 1},
\end{equation}
and the impedance of the configuration in Fig.~\ref{fig: Three-reactive Four-element N2}(a) is in the form of \eqref{eq: 01 Noa configurations not realizable}. Since it is assumed that the condition of Lemma~3
does not hold, $Z(s) \in \mathcal{Z}_{p2,z2}$ cannot be realized with fewer than five reactive elements.
If $Z(s)$ is realizable as  in Fig.~2(a), where $N_1$ is the configuration in Fig.~\ref{fig: Two-reactive configurations N1}(b) and $N_2$ is the configuration in Fig.~\ref{fig: Three-reactive Four-element N2}(a), then the impedance of $N_1$ is of degree two and is in the form of
\begin{equation} \label{eq: Z1 No2 (s+p1)(s+p)}
Z_1(s) = \frac{m(s^2 + pp_1)}{(s+p_1)(s+p)},
\end{equation}
where $m$, $p_1$, $p$ $> 0$, and the impedance of $N_2$ is of degree three and is in the form of
\eqref{eq: 02 Noa configurations not realizable}, where $\alpha$, $\gamma$ $> 0$, and \eqref{eq: Condition01 Noa configurations not realizable} and \eqref{eq: Condition02 Noa configurations not realizable} hold. Furthermore,
\begin{equation}  \label{eq: Z1 + Z2 No2 and Noa configurations}
\begin{split}
&Z(s) = Z_1(s) + Z_2(s) =\\
& \frac{m s^3 + (m p + \alpha)s^2 + m p_1 p s + (mp_1p^2 + \gamma)}{(s+p_1)(s+p)^2}.
\end{split}
\end{equation}
Comparing (1) with \eqref{eq: Z1 + Z2 No2 and Noa configurations}, one obtains
\begin{align}
m &= k,     \label{eq: 101 Z1 + Z2 No2 and Noa configurations} \\
mp + \alpha &= k(p_1 + 2z),  \label{eq: 102 Z1 + Z2 No2 and Noa configurations}  \\
mp_1p &= kz(2p_1+z),  \label{eq: 103 Z1 + Z2 No2 and Noa configurations}  \\
mp_1p^2 + \gamma &= kz^2p_1.  \label{eq: 104 Z1 + Z2 No2 and Noa configurations}
\end{align}
By \eqref{eq: 101 Z1 + Z2 No2 and Noa configurations}--\eqref{eq: 103 Z1 + Z2 No2 and Noa configurations},  one obtains
\begin{equation}  \label{eq: p_1 Z1 + Z2 No2 and Noa configurations}
p_1 = \frac{z^2}{p-2z}.
\end{equation}
By \eqref{eq: 101 Z1 + Z2 No2 and Noa configurations}, \eqref{eq: 104 Z1 + Z2 No2 and Noa configurations}, and \eqref{eq: p_1 Z1 + Z2 No2 and Noa configurations}, one obtains
\begin{equation}   \label{eq: gamma Z1 + Z2 No2 and Noa configurations}
\gamma = \frac{-kz^2(p-z)(p+z)}{p-2z}.
\end{equation}
It follows from \eqref{eq: p_1 Z1 + Z2 No2 and Noa configurations} and $p_1 > 0$ that
$p > 2z$, which further implies that $\gamma < 0$ by \eqref{eq: gamma Z1 + Z2 No2 and Noa configurations}. This is impossible.

Therefore, $Z(s) \in \mathcal{Z}_{p2,z2}$  cannot be realized as  in Fig.~2(a), where $N_1$ is the configuration in
Fig.~\ref{fig: Two-reactive configurations N1}(b) and $N_2$ is  the configuration in Fig.~\ref{fig: Three-reactive Four-element N2}(a).

It is clear that any network in Fig.~2(a), where $N_1$ is the configuration in
Fig.~\ref{fig: Two-reactive configurations N1}(b) and $N_2$ is  the configuration in Fig.~\ref{fig: Three-reactive Four-element N2}(b), can be a frequency inverse dual network of another one in Fig.~2(a), where $N_1$ is the configuration in
Fig.~\ref{fig: Two-reactive configurations N1}(b) and $N_2$ is  the configuration in Fig.~\ref{fig: Three-reactive Four-element N2}(a). Based on the principle of frequency inverse, $Z(s) \in \mathcal{Z}_{p2,z2}$  cannot be realized as such a network.

It has been shown that the impedance of the configuration in Fig.~\ref{fig: Two-reactive configurations N1}(b) is in the form of \eqref{eq: Z1 b configuration}, and the impedance of the configuration in Fig.~\ref{fig: Three-reactive Four-element N2}(c) is in the form of $Z(s) = (R_{21}L_{21}C_{22}s^2 + L_{21}s)/(R_{21}L_{21}C_{21}C_{22}s^3 + L_{21}(C_{21}+C_{22})s^2 + R_{21}C_{22}s + 1)$.
If $Z(s)$ is realizable as  in Fig.~2(a), where $N_1$ is the configuration in Fig.~\ref{fig: Two-reactive configurations N1}(b) and $N_2$ is the configuration in Fig.~\ref{fig: Three-reactive Four-element N2}(c), then the impedance of $N_1$ is of degree two and  is in the form of
\eqref{eq: Z1 No2 (s+p1)(s+p)},
where $m$, $p_1$, $p$ $> 0$, and the impedance of $N_2$ is of degree three and is in the form of
$Z(s) = (\alpha s^2 + \beta s)/((s+p_1)(s+p)^2)$, where $\alpha$, $\beta$ $> 0$. Furthermore,
\begin{equation}  \label{eq: Z1 + Z2 No2 and Noc configurations}
\begin{split}
&Z(s)
= Z_1(s) + Z_2(s)= \\
& \frac{ms^3 + (mp+\alpha)s^2 + (mp_1p+\beta)s + mp_1p^2}{(s+p_1)(s+p)^2}.
\end{split}
\end{equation}
Comparing (1) with \eqref{eq: Z1 + Z2 No2 and Noc configurations}, one obtains $m = k$ and $mp_1p^2 = kz^2p_1$, which further implies $p=z$. This contradicts the assumption.

Therefore, $Z(s) \in \mathcal{Z}_{p2,z2}$  cannot be realized as  in Fig.~2(a), where $N_1$ is the configuration in
Fig.~\ref{fig: Two-reactive configurations N1}(b) and $N_2$ is  the configuration in Fig.~\ref{fig: Three-reactive Four-element N2}(c).

It is clear that any network in Fig.~2(a), where $N_1$ is the configuration in
Fig.~\ref{fig: Two-reactive configurations N1}(b) and $N_2$ is  the configuration in Fig.~\ref{fig: Three-reactive Four-element N2}(d), can be a frequency inverse dual network of another one in Fig.~2(a), where $N_1$ is the configuration in
Fig.~\ref{fig: Two-reactive configurations N1}(b) and $N_2$ is  the configuration in Fig.~\ref{fig: Three-reactive Four-element N2}(c). Based on the principle of frequency inverse, $Z(s) \in \mathcal{Z}_{p2,z2}$  cannot be realized as such a network.

It has been shown that the impedance of the configuration in Fig.~\ref{fig: Two-reactive configurations N1}(b) is in the form of
\eqref{eq: Z1 b configuration}, and the impedance of the configuration in Fig.~8(f) is in the form of
\eqref{eq: 01 Nof configurations not realizable}.
If $Z(s)$ is realizable as  in Fig.~2(a), where $N_1$ is the configuration in Fig.~\ref{fig: Two-reactive configurations N1}(b) and $N_2$ is the configuration in Fig.~8(f), then the impedance of $N_1$ is of degree two and is in the form of
\eqref{eq: Z1 No2 (s+p1)(s+p)},
where $m$, $p_1$, $p$ $> 0$ and the impedance of $N_2$ is degree three and is in the form of \eqref{eq: 02 Noe configurations not realizable}, where $\alpha$, $\beta$, $\gamma$ $> 0$ and \eqref{eq: Condition01 Nof configurations not realizable}--\eqref{eq: Condition03 Nof configurations not realizable} hold.
Furthermore, one obtains
\eqref{eq: Z1 + Z2 No2 and Noe configurations}--\eqref{eq: gamma No2 and Noe configurations}.
Substituting \eqref{eq: 101 No2 and Noe configurations} and \eqref{eq: alpha No2 and Noe configurations}--\eqref{eq: gamma No2 and Noe configurations} into \eqref{eq: Condition01 Nof configurations not realizable}--\eqref{eq: Condition03 Nof configurations not realizable} yields
\begin{align}
p_1^2 + 2pp_1 - (2p^2-4zp+z^2) = 0,
\label{eq: equivalent Condition01 Nof configurations not realizable} \\
2pp_1^2 + z(4p-z)p_1 - p^2(p-2z) > 0,
\label{eq: equivalent Condition02 Nof configurations not realizable} \\
2pp_1^3 + (3p^2+4zp-z^2)p_1^2 + 2zp(4p-z)p_1 - 2p^3(p-2z) = 0,
\label{eq: equivalent Condition03 Nof configurations not realizable}
\end{align}
respectively.  By \eqref{eq: equivalent Condition01 Nof configurations not realizable}, \eqref{eq: equivalent Condition03 Nof configurations not realizable} can be further equivalent to
\begin{equation}  \label{eq: equivalent02 Condition03 Nof configurations not realizable}
(p^2 - 4zp + z^2)p_1^2 - 4p^3p_1 + 2p^3(p-2z) = 0.
\end{equation}
It is calculated that the resultant of \eqref{eq: equivalent Condition01 Nof configurations not realizable} and \eqref{eq: equivalent02 Condition03 Nof configurations not realizable} in $p_1$ is $8p^8 + 48zp^7 - 312z^2p^6 + 624z^3p^5 - 617z^4p^4 + 336z^5p^3 - 102z^6p^2 + 16z^7p - z^8$.
Since the condition of Lemma~3 does not hold, there exists at least one common root between \eqref{eq: equivalent Condition01 Nof configurations not realizable} and \eqref{eq: equivalent02 Condition03 Nof configurations not realizable} in $p_1$ if and only if
\begin{equation} \label{eq: Condition five-reactive-element configurations realizable 02}
\begin{split}
8p^8 + 48zp^7 &- 312z^2p^6 + 624z^3p^5 - 617z^4p^4  \\
&+ 336z^5p^3- 102z^6p^2 + 16z^7p - z^8 = 0
\end{split}
\end{equation}
holds with $p < z/(2 + \sqrt{5})$, which implies that the condition of Lemma~3 holds.
One further implies    $p_1 > 0$.
By \eqref{eq: equivalent Condition01 Nof configurations not realizable}, it is implied that \eqref{eq: equivalent Condition02 Nof configurations not realizable} is equivalent to
\begin{equation}  \label{eq: equivalent02 Condition02 Nof configurations not realizable}
p_1 < \frac{p(3p^2 - 6zp + 2z^2)}{(2p-z)^2}.
\end{equation}
By \eqref{eq: equivalent02 Condition03 Nof configurations not realizable},  \eqref{eq: equivalent02 Condition02 Nof configurations not realizable} is further equivalent to $(p-z)(7p^5 + 63zp^4 - 174z^2p^3 + 134z^3p^2 - 40z^4p + 4z^5) < 0$, which is verified to be satisfied.

It is clear that any network in Fig.~2(a), where $N_1$ is the configuration in
Fig.~\ref{fig: Two-reactive configurations N1}(b) and $N_2$ is  the configuration in Fig.~\ref{fig: Three-reactive Four-element N2}(h), can be a frequency inverse dual network of another one in Fig.~2(a), where $N_1$ is the configuration in
Fig.~\ref{fig: Two-reactive configurations N1}(b) and $N_2$ is  the configuration in Fig.~\ref{fig: Three-reactive Four-element N2}(f). Based on the principle of frequency inverse, $Z(s) \in \mathcal{Z}_{p2,z2}$  cannot be realized as such a network either.
\end{proof}

\begin{lemma} \label{lemma: five-reactive-element configurations realizable 01}
A biquadratic impedance $Z(s) \in \mathcal{Z}_{p2,z2}$ not satisfying the condition of Lemma~3 is realizable as  in Fig.~2(a), where $N_1$ is the configuration in Fig.~\ref{fig: Two-reactive configurations N1}(b) and $N_2$ is the configuration in Fig.~\ref{fig: Three-reactive Four-element N2}(e) (that is, the configuration in Fig.~4(a) whose one-terminal-pair labeled graph is $N_{4a}$), if and only if
\begin{equation} \label{eq: Condition five-reactive-element configurations realizable 01}
16 p^4 - 40 z p^3 + 31 z^2 p^2 - 10 z^3 p + z^4 = 0
\end{equation}
and $p < z/(2 + \sqrt{5})$
(it can be verified  that there is only one distinct root of the equation $16 \eta^4 - 40 \eta^3 + 31 \eta^2 - 10 \eta + 1 = 0$ for $\eta \in (0, 1/(2 + \sqrt{5}))$).
\end{lemma}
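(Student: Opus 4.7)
The plan is to equate $Z(s) = Z_1(s)+Z_2(s)$ using the explicit impedance expressions for $N_1$ from Fig.~\ref{fig: Two-reactive configurations N1}(b) and $N_2$ from Fig.~\ref{fig: Three-reactive Four-element N2}(e), match coefficients with $k(s+z)^2/(s+p)^2$, and jointly solve with the structural constraints that force $N_2$ to take that specific configuration. Since the condition of Lemma~3 is assumed to fail, $N_1$ cannot degenerate to one reactive element and $N_2$ cannot degenerate to a two-reactive four-element network, so $Z_1(s)$ must take the form
\begin{equation*}
Z_1(s)=\frac{m(s^2+p p_1)}{(s+p_1)(s+p)},\qquad m,p_1>0,
\end{equation*}
while $Z_2(s)$ must take the form \eqref{eq: 02 Noe configurations not realizable} with $\alpha,\beta,\gamma>0$, subject to the two equalities \eqref{eq: Condition01 Noe configurations not realizable} and \eqref{eq: Condition02 Noe configurations not realizable} plus the inequality \eqref{eq: Condition03 Noe configurations not realizable}.

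First I would add $Z_1(s)$ and $Z_2(s)$ over the common denominator $(s+p_1)(s+p)^2$ and equate its numerator to $k(s+z)^2(s+p_1)$. Matching the four coefficients yields a linear system that solves for $m$, $\alpha$, $\beta$, $\gamma$ as explicit rational functions of $p_1,p,z,k$ (with $p_1$ remaining as a free parameter). Positivity of these four quantities cuts out a range of admissible $p_1$. Substituting the resulting $\alpha,\beta,\gamma$ back into \eqref{eq: Condition01 Noe configurations not realizable} and \eqref{eq: Condition02 Noe configurations not realizable} produces two polynomial equations in $p_1$ whose coefficients are polynomial in $p,z$.

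Next I would eliminate $p_1$ by taking the resultant in $p_1$ of these two polynomials; direct computation should identify the resultant, after stripping extraneous factors corresponding to $p=z$, $p=(2\pm\sqrt{5})z$, or $p_1=0$, as a nonzero multiple of
\begin{equation*}
16p^4-40zp^3+31z^2p^2-10z^3p+z^4,
\end{equation*}
establishing the necessity of \eqref{eq: Condition five-reactive-element configurations realizable 01}. Setting $\eta=p/z$ and treating $16\eta^4-40\eta^3+31\eta^2-10\eta+1$ on $(0,\infty)$ via sign changes at the endpoints of the intervals distinguished by the assumption that the condition of Lemma~3 fails (namely $\eta\le 1/(2+\sqrt 5)$ or $\eta\ge 2+\sqrt 5$), one verifies that exactly one real root lies in $(0,1/(2+\sqrt 5))$ and that the companion roots are incompatible with the positivity of $m,\alpha,\beta,\gamma$. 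This gives the necessary side, together with the additional restriction $p<z/(2+\sqrt5)$.

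For sufficiency I would reverse the construction: given $p<z/(2+\sqrt 5)$ with \eqref{eq: Condition five-reactive-element configurations realizable 01}, the reduced univariate system admits a positive $p_1$, which then produces positive values of $m,\alpha,\beta,\gamma$ via the closed-form formulas from Step one, and the element values $C_{21},C_{22},R_{21},L_{21}$ of Fig.~\ref{fig: Three-reactive Four-element N2}(e) are recovered from \eqref{eq: C21 Noe configurations not realizable}--\eqref{eq: L21 Noe configurations not realizable}, while $R_1,L_1,C_1$ of Fig.~\ref{fig: Two-reactive configurations N1}(b) are recovered from \eqref{eq: Z1 b configuration}, yielding a concrete realization. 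The main obstacle will be the resultant step together with the root-counting on the quartic: isolating the factor $16p^4-40zp^3+31z^2p^2-10z^3p+z^4$ cleanly from the raw resultant and then verifying that exactly one of its roots survives all four positivity constraints (and lies in the single interval $p<z/(2+\sqrt 5)$) is where the bookkeeping is heaviest and where the statement's uniqueness claim is ultimately anchored.
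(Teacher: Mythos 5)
Your proposal follows essentially the same route as the paper's own proof: decompose $Z(s)=Z_1(s)+Z_2(s)$ with $Z_1(s)=m(s^2+pp_1)/((s+p_1)(s+p))$ forced by the degree-two requirement on $N_1$, match numerator coefficients against $k(s+z)^2(s+p_1)$ to get $m,\alpha,\beta,\gamma$ in terms of $p_1$, substitute into the two equality constraints \eqref{eq: Condition01 Noe configurations not realizable}--\eqref{eq: Condition02 Noe configurations not realizable} and the inequality \eqref{eq: Condition03 Noe configurations not realizable}, eliminate $p_1$ by a resultant whose nontrivial factor is $16p^4-40zp^3+31z^2p^2-10z^3p+z^4$ (the paper's extraneous factor is $z^2(p+z)(p-z)^3$), and reverse the construction for sufficiency with explicit element values. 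This matches the paper's argument in structure and in all key steps.
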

\begin{proof}
\textit{Necessity.}
It has been shown that the impedance of the configuration in Fig.~\ref{fig: Two-reactive configurations N1}(b)
is in the form of
\eqref{eq: Z1 b configuration}, and the impedance of the configuration in Fig.~\ref{fig: Three-reactive Four-element N2}(e) is in the form of
\eqref{eq: 01 Noe configurations not realizable}.
Since it is assumed that the condition of Lemma~3 does not hold, $Z(s) \in \mathcal{Z}_{p2,z2}$ cannot be realized with fewer than five reactive elements.
If $Z(s)$ is realizable as in Fig.~2(a), where $N_1$ is the configuration in Fig.~\ref{fig: Two-reactive configurations N1}(b) and $N_2$ is the configuration in Fig.~\ref{fig: Three-reactive Four-element N2}(e), then the impedance of $N_1$ is of degree two and is in the form of
\eqref{eq: Z1 No2 (s+p1)(s+p)},
where $m$, $p_1$, $p$ $> 0$, and the impedance of $N_2$ is of degree three and  is in the form of \eqref{eq: 02 Noe configurations not realizable}, where $\alpha$, $\beta$ $> 0$ and \eqref{eq: Condition01 Noe configurations not realizable}--\eqref{eq: Condition03 Noe configurations not realizable} hold.
Furthermore, one obtains
\begin{equation}  \label{eq: Z1 + Z2 No2 and Noe configurations}
Z(s) = Z_1(s) + Z_2(s) =
\frac{ms^3 + (mp+\alpha)s^2 + (mp_1p+\beta)s + (mp_1p^2 + \gamma)}{(s+p_1)(s+p)^2}.
\end{equation}
Comparing (1) with \eqref{eq: Z1 + Z2 No2 and Noe configurations}, one obtains
\begin{align}
m &= k,
\label{eq: 101 No2 and Noe configurations} \\
mp + \alpha &= k(p_1 + 2z),
\label{eq: 102 No2 and Noe configurations}  \\
mp_1p + \beta &= kz(2p_1+z),
\label{eq: 103 No2 and Noe configurations}  \\
mp_1p^2 + \gamma &= kz^2p_1.
\label{eq: 104 No2 and Noe configurations}
\end{align}
Then, \eqref{eq: 101 No2 and Noe configurations} and \eqref{eq: 102 No2 and Noe configurations} yield
\begin{equation} \label{eq: alpha No2 and Noe configurations}
\alpha = -k(p-p_1-2z).
\end{equation}
By \eqref{eq: 101 No2 and Noe configurations} and \eqref{eq: 103 No2 and Noe configurations}, one obtains
\begin{equation} \label{eq: beta No2 and Noe configurations}
\beta = -k(p_1p - 2zp_1 - z^2).
\end{equation}
It follows from \eqref{eq: 101 No2 and Noe configurations} and \eqref{eq: 104 No2 and Noe configurations} that
\begin{equation} \label{eq: gamma No2 and Noe configurations}
\gamma = -kp_1(p-z)(p+z).
\end{equation}
Substituting \eqref{eq: 101 No2 and Noe configurations} and
\eqref{eq: alpha No2 and Noe configurations}--\eqref{eq: gamma No2 and Noe configurations} into \eqref{eq: Condition01 Noe configurations not realizable}--\eqref{eq: Condition03 Noe configurations not realizable} yields
\begin{equation} \label{eq: equivalent Condition01 No2 and Noe configurations not realizable}
2pp_1^2 + z(4p-z)p_1 - p^2(p-2z) = 0,
\end{equation}
\begin{equation} \label{eq: equivalent Condition02 No2 and Noe configurations not realizable}
\begin{split}
(2p^2 - z^2)p_1^2 &+ 2pz(2p-z)p_1 \\
&- (p^4 -5z^2p^2 + 4z^3p - z^4) = 0,
\end{split}
\end{equation}
\begin{equation} \label{eq: equivalent Condition03 No2 and Noe configurations not realizable}
p_1^2 + 2pp_1 - (2p^2 - 4zp + z^2) > 0,
\end{equation}
respectively. It is calculated that the resultant of \eqref{eq: equivalent Condition01 No2 and Noe configurations not realizable} and \eqref{eq: equivalent Condition02 No2 and Noe configurations not realizable} in $p_1$ is $z^2(p+z)(p-z)^3(16p^4 - 40zp^3 + 31z^2p^2 - 10z^3p + z^4)$.
Since the condition of Lemma~3 does not hold, there exists at least one common root between \eqref{eq: equivalent Condition01 No2 and Noe configurations not realizable} and \eqref{eq: equivalent Condition02 No2 and Noe configurations not realizable} in $p_1$ if and only if (6) holds with $p < z/(2 + \sqrt{5})$. One implies   $p_1 > 0$.
By \eqref{eq: equivalent Condition01 No2 and Noe configurations not realizable}, it is implied that \eqref{eq: equivalent Condition03 No2 and Noe configurations not realizable} is equivalent to
\begin{equation}
\label{eq: equivalent02 Condition03 No2 and Noe configurations not realizable}
p_1 > \frac{p(3p^2 - 6zp + 2z^2)}{(2p - z)^2}.
\end{equation}
Based on \eqref{eq: equivalent Condition02 No2 and Noe configurations not realizable}, one implies that \eqref{eq: equivalent02 Condition03 No2 and Noe configurations not realizable} is equivalent to $(p^2+2zp-z^2)(p-z)^3(2p^3+10zp^2-7z^2p+z^3)>0$, which can be verified to be satisfied.

\textit{Sufficiency.}
Based on the discussion in the necessity part, there exists  $p_1 > 0$ such that \eqref{eq: equivalent Condition01 No2 and Noe configurations not realizable}--\eqref{eq: equivalent Condition03 No2 and Noe configurations not realizable} hold.
Let $m$, $\alpha$, $\beta$, and $\gamma$ satisfy \eqref{eq: 101 No2 and Noe configurations} and \eqref{eq: alpha No2 and Noe configurations}--\eqref{eq: gamma No2 and Noe configurations}, which obviously implies that $\alpha$, $\beta$, $\gamma$, $m$ $> 0$.
Therefore, \eqref{eq: Condition01 Noe configurations not realizable}--\eqref{eq: Condition03 Noe configurations not realizable}
and \eqref{eq: 102 No2 and Noe configurations}--\eqref{eq: 104 No2 and Noe configurations} hold. Therefore, $Z(s)$ can be written in the form of \eqref{eq: Z1 + Z2 No2 and Noe configurations}. Decompose
$Z(s)$ as $Z(s) = Z_1(s) + Z_2(s)$, where $Z_1(s)$ is in the form of \eqref{eq: Z1 No2 (s+p1)(s+p)} where $m$, $p_1$, $p$ $> 0$ and $Z_2(s)$ is in the form of \eqref{eq: 02 Noe configurations not realizable} where $\alpha$, $\beta$ $> 0$.
By letting $R_1 = m$, $L_1 = m/(p+p_1)$, and $C_1 = (p+p_1)/(mpp_1)$,
$Z_1(s)$ is realizable as in Fig.~\ref{fig: Two-reactive configurations N1}(b).
Let $C_{21}$, $C_{22}$, $R_{21}$, and $L_{21}$ satisfy \eqref{eq: C21 Noe configurations not realizable}--\eqref{eq: L21 Noe configurations not realizable}. Since \eqref{eq: Condition03 Noe configurations not realizable} holds, $C_{21}$, $C_{22}$, $R_{21}$, $L_{21}$ $> 0$. Based on the discussion in the proof of Lemma~11, it follows that \eqref{eq: 03 Noe configurations not realizable}--\eqref{eq: 08 Noe configurations not realizable} hold because \eqref{eq: Condition01 Noe configurations not realizable} and \eqref{eq: Condition02 Noe configurations not realizable} are satisfied.
Therefore, $Z_2(s)$ can be realized as in Fig.~\ref{fig: Three-reactive Four-element N2}(e).
The sufficiency part is proved.
\end{proof}


\begin{lemma}
\label{lemma: five-reactive-element configurations realizable 03}
A biquadratic impedance $Z(s) \in \mathcal{Z}_{p2,z2}$ not satisfying the condition of Lemma~3 cannot be realized
as  in Fig.~2(a), where $N_1$ is the configuration in Fig.~\ref{fig: Two-reactive configurations N1}(c) and $N_2$ is one of the configurations in Fig.~\ref{fig: Three-reactive Four-element N2}(a) and \ref{fig: Three-reactive Four-element N2}(f).
\end{lemma}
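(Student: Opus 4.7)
The plan is to handle the two configurations of $N_2$ separately by coefficient matching. In each case I will write the impedance of $N_1$ (configuration in Fig.~\ref{fig: Two-reactive configurations N1}(c)) in the parametrized form used in the proof of Lemma~\ref{lemma: 02 N1}, write the impedance of $N_2$ in the parametrized form already produced in Lemmas~\ref{lemma: four-reactive seven-element configurations not realizable} and \ref{lemma: four-reactive seven-element configurations lemma 1}, and then expand $Z_1(s) + Z_2(s)$ over the common denominator $(s+p_1)(s+p)^2$ to match against $k(s+p_1)(s+z)^2$.

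Since the impedance formula \eqref{eq: Z1 c configuration} is a rational function of exact degree two for any positive element values, requiring its denominator to be $(s+p_1)(s+p)$ with $p_1,p>0$ forces the form
$$Z_1(s) = \frac{m s(s+q)}{(s+p_1)(s+p)},$$
with $m,q>0$; matching the three element values $R_1,L_1,C_1$ to the four parameters $m,q,p_1,p$ imposes the structural identity $q(p+p_1)=p p_1$ (equivalently $1/q=1/p+1/p_1$). For $N_2$ equal to Fig.~\ref{fig: Three-reactive Four-element N2}(a), write $Z_2(s)=(\alpha s^2+\gamma)/((s+p_1)(s+p)^2)$ with $\alpha,\gamma>0$ satisfying \eqref{eq: Condition01 Noa configurations not realizable} and \eqref{eq: Condition02 Noa configurations not realizable}. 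The vanishing of the $s$-coefficient in the numerator of $Z_2$ forces $mpq=kz(z+2p_1)$, so $q=z(z+2p_1)/p$; combining with the structural constraint gives a quadratic in $p_1$, while substituting $m=k$, $\gamma=kp_1z^2$ and $\alpha=k(p_1+2z-p-q)$ into \eqref{eq: Condition01 Noa configurations not realizable} gives a second polynomial in $p_1$. I expect eliminating $p_1$ from this overdetermined system to force either $p=z$, violating $p\neq z$, or $p/z$ to lie in the range covered by the condition of Lemma~3, contradicting the hypothesis.

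For $N_2$ equal to Fig.~\ref{fig: Three-reactive Four-element N2}(f) the same parametrization of $Z_1$ applies and $Z_2(s)=(\alpha s^2+\beta s+\gamma)/((s+p_1)(s+p)^2)$ with $\alpha,\beta,\gamma>0$ subject to the two equality-type realizability conditions \eqref{eq: Condition01 Nof configurations not realizable} and \eqref{eq: Condition03 Nof configurations not realizable} together with the positivity \eqref{eq: Condition02 Nof configurations not realizable}. Coefficient matching gives $m=k$ and rational expressions for $\alpha,\beta,\gamma$ in terms of $p,z,p_1,q$; eliminating $q$ via $q(p+p_1)=pp_1$ reduces the two realizability equations to two polynomial relations in $p_1$ alone. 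Taking the resultant in $p_1$ should produce a polynomial in $p,z$ which, combined with positivity of $\alpha,\beta,\gamma$ and $p_1$, forces the condition of Lemma~3 to hold, again contradicting the hypothesis.

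The main obstacle will be the algebraic bookkeeping: as in Lemmas~\ref{lemma: configurations not realizable} and \ref{lemma: five-reactive-element configurations realizable 01}, the resultants of the competing polynomials in $p_1$ are of high degree in $p,z$, and the concluding sign analysis of $\alpha,\beta,\gamma,p_1$ across each intermediate factorization must be carried out delicately so that every admissible branch either collapses to an impossibility or lands precisely in the range singled out by the condition of Lemma~3.
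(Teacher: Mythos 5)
Your proposal follows essentially the same route as the paper: parametrize $Z_1$ as $ms(s+q)/((s+p_1)(s+p))$ with the structural identity $q(p+p_1)=pp_1$, parametrize $Z_2$ with the realizability constraints already derived in Lemmas~\ref{lemma: four-reactive seven-element configurations not realizable} and \ref{lemma: four-reactive seven-element configurations lemma 1}, match coefficients against $k(s+p_1)(s+z)^2$, and eliminate $p_1$ by a resultant to show that any solution forces the condition of Lemma~3, contradicting the hypothesis. The only difference is the order in which the coefficient-matching and realizability equations are solved (the paper determines $\alpha,\beta,\gamma$ from the equality-type element constraints first and then reduces the $s^2$- and $s$-coefficient equations to two polynomials in $p_1$), which is algebraically equivalent; the explicit resultants, e.g. $-p^4(p^6-8zp^5+20z^2p^4-28z^3p^3+21z^4p^2-12z^5p+2z^6)$ for configuration (a), remain to be computed to close the argument.
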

\begin{proof}
It has been shown that the impedance of the configuration in Fig.~\ref{fig: Two-reactive configurations N1}(c) is in the form of $Z_1(s) = s(R_1L_1C_1 s + L_1)/(L_1C_1 s^2 + R_1C_1 s + 1)$, and the impedance of the configuration in Fig.~\ref{fig: Three-reactive Four-element N2}(a) is in the form of
\eqref{eq: 01 Noa configurations not realizable}.
Since it is assumed that the condition of Lemma~3
does not hold, $Z(s) \in \mathcal{Z}_{p2,z2}$ cannot be realized with fewer than five reactive elements.
If $Z(s)$ is realizable as  in Fig.~2(a), where $N_1$ is the configuration  in Fig.~\ref{fig: Two-reactive configurations N1}(c) and $N_2$ is the configuration in Fig.~\ref{fig: Three-reactive Four-element N2}(a), then the impedance of $N_1$ is in the form of
\begin{equation}  \label{eq: Z1 No3 (s+p1)(s+p)}
Z_1(s) = \frac{ms(s + q)}{(s+p_1)(s+p)},
\end{equation}
where $m$, $p_1$, $p$ $> 0$ and $q = p_1p/(p_1 + p)$, and the impedance of $N_2$ is in the form of \eqref{eq: 02 Noa configurations not realizable}, where $\alpha$, $\gamma$ $> 0$, and moreover \eqref{eq: Condition01 Noa configurations not realizable} and \eqref{eq: Condition02 Noa configurations not realizable} hold. Furthermore,
\begin{equation}  \label{eq: Z1 + Z2 No3 and Noa configurations}
\begin{split}
&Z(s) = Z_1(s) + Z_2(s) = \\
& \frac{ms^3 + \frac{mp^2 + (2mp_1+\alpha)p + \alpha p_1}{p_1 + p}s^2 + \frac{mp_1p^2}{p_1+p}s + \gamma}{(s+p_1)(s+p)^2}.
\end{split}
\end{equation}
Comparing (1) with \eqref{eq: Z1 + Z2 No3 and Noa configurations}, one obtains \begin{align}
m &= k,
\label{eq: 101 No3 and Noa configurations} \\
\frac{mp^2 + (2mp_1+\alpha)p + \alpha p_1}{p + p_1} &= k(p_1 + 2z),
\label{eq: 102 No3 and Noa configurations}  \\
\frac{mp_1p^2}{p + p_1} &= kz(2p_1+z),
\label{eq: 103 No3 and Noa configurations}  \\
\gamma &= kp_1z^2.
\label{eq: 104 No3 and Noa configurations}
\end{align}
Then, it follows from \eqref{eq: 101 No3 and Noa configurations} and \eqref{eq: 102 No3 and Noa configurations} that
\begin{equation}  \label{eq: alpha No3 and Noa configurations}
\alpha = \frac{k(p_1^2 - (p-2z)p_1 - p(p-2z))}{p+p_1}.
\end{equation}
Substituting \eqref{eq: 101 No3 and Noa configurations}, \eqref{eq: 104 No3 and Noa configurations}, and \eqref{eq: alpha No3 and Noa configurations} into \eqref{eq: Condition01 Noa configurations not realizable}, \eqref{eq: Condition02 Noa configurations not realizable}, and \eqref{eq: 103 No3 and Noa configurations} yields
\begin{equation} \label{eq: equivalent Condition01 No3 and Noa configurations not realizable}
\begin{split}
2pp_1^3 &- (p^2 - 4zp + z^2)p_1^2 \\
&- p(3p^2 - 6zp + z^2)p_1 - p^3(p-2z) > 0,
\end{split}
\end{equation}
\begin{equation} \label{eq: equivalent Condition02 No3 and Noa configurations not realizable}
\begin{split}
(p-z)(p+z)p_1^2 - p(p^2 &- 2zp + 3z^2)p_1 \\
&- p^2(p^2 - 2zp + 2z^2) = 0,
\end{split}
\end{equation}
\begin{equation} \label{eq: equivalent Condition03 No3 and Noa configurations not realizable}
\begin{split}
2zp_1^2 - (p^2 - 2zp - z^2)p_1 + z^2p  = 0,
\end{split}
\end{equation}
respectively. By \eqref{eq: alpha No3 and Noa configurations}, the assumption of $\alpha > 0$ implies
\begin{equation}  \label{eq: equivalent Condition04 No3 and Noa configurations not realizable}
p_1^2 - (p-2z)p_1 - p(p-2z) > 0.
\end{equation}
By calculation,
the resultant of \eqref{eq: equivalent Condition02 No3 and Noa configurations not realizable} and \eqref{eq: equivalent Condition03 No3 and Noa configurations not realizable} in $p_1$ is
$-p^4(p^6 - 8zp^5 + 20z^2p^4 - 28z^3p^3 + 21z^4p^2 - 12z^5p + 2z^6)$.
Since the condition of
Lemma~3 does not hold, there exists at least one common root in $p_1$  between \eqref{eq: equivalent Condition02 No3 and Noa configurations not realizable} and \eqref{eq: equivalent Condition03 No3 and Noa configurations not realizable} if and only if
\begin{equation}
\label{eq: Condition five-reactive-element configurations realizable 03}
p^6 - 8zp^5 + 20z^2p^4 -28z^3p^3 + 21z^4p^2 -12z^5p + 2z^6 = 0
\end{equation}
holds with $p > (2 + \sqrt{5})z$, which implies that the condition of Lemma~3 holds.
This further implies that  $p_1 > 0$.
From \eqref{eq: equivalent Condition03 No3 and Noa configurations not realizable}, it follows that \eqref{eq: equivalent Condition01 No3 and Noa configurations not realizable} is equivalent to
\begin{equation}
\label{eq: equivalent02 Condition01 No3 and Noa configurations not realizable}
p_1 > \frac{zp(2p-3z)}{(p-z)(p-3z)}.
\end{equation}
By \eqref{eq: equivalent Condition02 No3 and Noa configurations not realizable}, one has that \eqref{eq: equivalent02 Condition01 No3 and Noa configurations not realizable} is further equivalent to
$p^{10} - 12zp^9 + 54z^2p^8 - 114z^3p^7 + 100z^4p^6 + 40z^5p^5 - 164z^6p^4 + 142z^7p^3 - 65z^8p^2 + 16z^9p - 2z^{10} > 0$, which can indeed be verified to be true.

It has been shown that the impedance of the configuration in Fig.~\ref{fig: Two-reactive configurations N1}(c) is in the form of
\eqref{eq: Z1 c configuration}, and the impedance of the configuration in Fig.~8(f) is in the form of
\eqref{eq: 01 Nof configurations not realizable}.
If $Z(s)$ is realizable as in Fig.~2(a), where $N_1$ is the configuration in Fig.~\ref{fig: Two-reactive configurations N1}(c) and $N_2$ is the configuration in Fig.~\ref{fig: Three-reactive Four-element N2}(f), then the impedance of $N_1$ is in the form of \eqref{eq: Z1 No3 (s+p1)(s+p)}, where $m$, $p_1$, $p$ $> 0$ and $q = p_1p/(p_1 + p)$, and the impedance of $N_2$ is in the form of \eqref{eq: 02 Noe configurations not realizable}, where $\alpha$, $\beta$, $\gamma$ $> 0$ and \eqref{eq: Condition01 Nof configurations not realizable}--\eqref{eq: Condition03 Nof configurations not realizable} hold. Furthermore, one obtains \eqref{eq: Z1 + Z2 No3 and Noe configurations}--\eqref{eq: 104 No3 and Noe configurations}. It follows from \eqref{eq: Condition03 Nof configurations not realizable} and \eqref{eq: 104 No3 and Noe configurations} that
\begin{equation}  \label{eq: alpha No3 and Nof configurations}
\alpha = \frac{kp_1z^2(p_1+2p)}{2p(p+p_1)^2}.
\end{equation}
Then, \eqref{eq: Condition01 Nof configurations not realizable} and \eqref{eq: alpha No3 and Nof configurations} yield
\begin{equation}  \label{eq: beta No3 and Nof configurations}
\beta = \frac{kp_1z^2(p_1 + 2p)^2}{2p(p+p_1)^2}.
\end{equation}
Substituting \eqref{eq: 101 No3 and Noe configurations}, \eqref{eq: 104 No3 and Noe configurations}, \eqref{eq: alpha No3 and Noe configurations}, and \eqref{eq: beta No3 and Nof configurations} into \eqref{eq: Condition02 Nof configurations not realizable}, \eqref{eq: 102 No3 and Noe configurations}, and \eqref{eq: 103 No3 and Noe configurations} gives
\begin{align}
\frac{kp_1^2z^2p}{2(p+p_1)^2} &> 0,
\label{eq: equivalent Condition01 No3 and Nof configurations not realizable}  \\
2pp_1^3 + z(4p - z)p_1^2 - 2p(2p^2 - 4zp + z^2)p_1 - 2p^3(p-2z) &= 0,
\label{eq: equivalent Condition02 No3 and Nof configurations not realizable}   \\
z(4p-z)p_1^3 - 2p(p^2 - 4zp + z^2)p_1^2 - 2p^3(p-2z)p_1 + 2z^2p^3 &= 0,
\label{eq: equivalent Condition03 No3 and Nof configurations not realizable}
\end{align}
respectively. It is obvious that \eqref{eq: equivalent Condition01 No3 and Nof configurations not realizable} holds.
It is calculated that the resultant of \eqref{eq: equivalent Condition02 No3 and Nof configurations not realizable} and \eqref{eq: equivalent Condition03 No3 and Nof configurations not realizable} in
$p_1$ is $-4p^6z^4(2p^4 - 12zp^3 + 18z^2p^2 - 8z^3p + z^4)^2$.
Since the condition of
Lemma~3 does not hold,
it is implied that there exists at least one common root between
\eqref{eq: equivalent Condition02 No3 and Nof configurations not realizable} and \eqref{eq: equivalent Condition03 No3 and Nof configurations not realizable} if and only if
\begin{equation}  \label{eq: Condition five-reactive-element configurations realizable 05}
2p^4 - 12zp^3 + 18z^2p^2 - 8z^3p + z^4 = 0
\end{equation}
holds  with $p < z/(2 + \sqrt{5})$, which implies that the condition  of Lemma~3 holds.
This further implies that  $p_1 > 0$.
\end{proof}

\begin{lemma}
\label{lemma: five-reactive-element configurations realizable 04}
A biquadratic impedance $Z(s) \in \mathcal{Z}_{p2,z2}$ not satisfying the condition of Lemma~3 is realizable as in Fig.~2(a), where $N_1$ is the configuration in Fig.~\ref{fig: Two-reactive configurations N1}(c) and $N_2$ is the configuration in Fig.~\ref{fig: Three-reactive Four-element N2}(e) (that is, the configuration in Fig.~5(a)   whose one-terminal-pair labeled graph is $N_{5a}$), if and only if
\begin{equation}  \label{eq: Condition five-reactive-element configurations realizable 04}
\begin{split}
p^{10} &- 16zp^9 + 118z^2p^8 - 476z^3p^7
\\
&+ 1066z^4p^6 - 1372z^5p^5
+ 1064z^6p^4   \\
&- 524z^7p^3+ 161z^8p^2 - 28z^9p + 2z^{10} = 0
\end{split}
\end{equation}
and  $p < z/(2 + \sqrt{5})$ (it can be verified that the equation $\eta^{10} - 16 \eta^9 + 118 \eta^8  - 476 \eta^7
+ 1066 \eta^6 - 1372 \eta^5 + 1064 \eta^4 - 524 \eta^3  + 161 \eta^2 - 28 \eta + 2 = 0$ has only one distinct root for $\eta \in (0, 1/(2 + \sqrt{5}))$).
\end{lemma}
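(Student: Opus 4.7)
The plan is to mirror the structure of Lemma~\ref{lemma: five-reactive-element configurations realizable 01}, replacing the $N_1$ configuration by Fig.~\ref{fig: Two-reactive configurations N1}(c). First I would record that the impedance of Fig.~\ref{fig: Two-reactive configurations N1}(c) must realize a degree-two function of the form
\begin{equation*}
Z_1(s) = \frac{m s(s+q)}{(s+p_1)(s+p)}, \qquad q = \frac{p_1 p}{p_1 + p},
\end{equation*}
with $m, p_1, p > 0$, while $N_2$ (Fig.~\ref{fig: Three-reactive Four-element N2}(e)) must realize a degree-three function of the form \eqref{eq: 02 Noe configurations not realizable} whose element values satisfy the constraints \eqref{eq: Condition01 Noe configurations not realizable}--\eqref{eq: Condition03 Noe configurations not realizable} already established in the proof of Lemma~\ref{lemma: four-reactive seven-element configurations not realizable}. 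Since the condition of Lemma~3 is assumed to fail, any realization must use at least five reactive elements, so the decomposition $Z(s) = Z_1(s) + Z_2(s)$ is forced to have $N_1$ contribute exactly two reactive elements and $N_2$ exactly three.

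For necessity, I would sum $Z_1(s) + Z_2(s)$, match the resulting numerator with $k(s+z)^2(s+p_1)$ using (1), and solve for $m$, $\alpha$, $\beta$, $\gamma$ in terms of $p_1$, $p$, $z$, $k$. The coupling relation $q = p_1 p/(p_1+p)$ produces an additional polynomial equation in $p_1$ beyond those appearing in Lemma~\ref{lemma: five-reactive-element configurations realizable 01}. Substituting the derived expressions for $\alpha, \beta, \gamma$ into \eqref{eq: Condition01 Noe configurations not realizable} and \eqref{eq: Condition02 Noe configurations not realizable} yields two polynomial equations in $p_1$ (with coefficients in $p$ and $z$), together with the inequality coming from \eqref{eq: Condition03 Noe configurations not realizable}. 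Taking the resultant of these two polynomials in $p_1$ produces a polynomial in $p$ and $z$ whose nontrivial factor vanishing is exactly \eqref{eq: Condition five-reactive-element configurations realizable 04}. The exclusion of the condition of Lemma~3 restricts the admissible range to $p < z/(2+\sqrt{5})$, in parallel with the argument of Lemma~\ref{lemma: five-reactive-element configurations realizable 01}.

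For sufficiency, given that \eqref{eq: Condition five-reactive-element configurations realizable 04} holds with $p < z/(2+\sqrt{5})$, I would pick the unique root $p_1 > 0$ of the reduced system, define $m, \alpha, \beta, \gamma$ through the formulas obtained in the necessity step, and verify the positivity of each quantity as well as the strict inequality \eqref{eq: Condition03 Noe configurations not realizable}. From the positive parameters I would then recover the element values of $N_1$ from \eqref{eq: Z1 c configuration} and those of $N_2$ from \eqref{eq: C21 Noe configurations not realizable}--\eqref{eq: L21 Noe configurations not realizable}, obtaining the required seven-element realization $Z(s) = Z_1(s) + Z_2(s)$.

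The main obstacle will be the algebraic bookkeeping in the necessity step: because $Z_1$ in the present configuration carries the nontrivial constraint $q = p_1 p/(p_1+p)$, the coefficient-matching system is tighter than in Lemma~\ref{lemma: five-reactive-element configurations realizable 01}, and the resultant calculation must be carried out carefully to isolate the degree-ten factor in \eqref{eq: Condition five-reactive-element configurations realizable 04} and to check that it has exactly one root in $(0, 1/(2+\sqrt{5}))$. Once this factorization is in hand, the positivity and inequality verifications reduce to routine sign checks on explicit rational functions of $p/z$.
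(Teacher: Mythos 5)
Your proposal matches the paper's proof in all essentials: the same decomposition $Z(s)=Z_1(s)+Z_2(s)$ with $Z_1$ parametrized as $ms(s+q)/((s+p_1)(s+p))$, $q=p_1p/(p_1+p)$, the same reuse of the element-value constraints \eqref{eq: Condition01 Noe configurations not realizable}--\eqref{eq: Condition03 Noe configurations not realizable} from the proof of Lemma~\ref{lemma: four-reactive seven-element configurations not realizable}, the same coefficient matching against $k(s+z)^2(s+p_1)$, and the same resultant-in-$p_1$ elimination isolating the degree-ten factor, followed by the same reconstruction of element values for sufficiency. The only (immaterial) difference is the order of elimination --- the paper solves \eqref{eq: Condition01 Noe configurations not realizable}--\eqref{eq: Condition02 Noe configurations not realizable} for $\alpha,\beta$ first and takes the resultant of the residual coefficient-matching equations, whereas you do the reverse --- so the proposal is correct and essentially identical to the paper's argument.
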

\begin{proof}
\textit{Necessity.}
It has been shown that the impedance of the configuration in Fig.~\ref{fig: Two-reactive configurations N1}(c) is in the form of
\eqref{eq: Z1 c configuration}, and the impedance of the configuration in Fig.~\ref{fig: Three-reactive Four-element N2}(e) is in the form of
\eqref{eq: 01 Noe configurations not realizable}.
Since it is assumed that the condition of Lemma~3
does not hold, $Z(s) \in \mathcal{Z}_{p2,z2}$ cannot be realized with fewer than five reactive elements.
If $Z(s)$ is realizable as  in Fig.~2(a), where $N_1$ is the configuration in Fig.~\ref{fig: Two-reactive configurations N1}(c) and $N_2$ is the configuration  in Fig.~\ref{fig: Three-reactive Four-element N2}(e), then the impedance of $N_1$ is in the form of \eqref{eq: Z1 No3 (s+p1)(s+p)},
where $m$, $p_1$, $p$ $> 0$ and $q = p_1p/(p_1 + p)$, and the impedance of $N_2$ is in the form of \eqref{eq: 02 Noe configurations not realizable}, where $\alpha$, $\beta$, $\gamma$ $> 0$ and
\eqref{eq: Condition01 Noe configurations not realizable}--\eqref{eq: Condition03 Noe configurations not realizable} hold.
Furthermore, one obtains
\begin{equation}  \label{eq: Z1 + Z2 No3 and Noe configurations}
Z(s) = Z_1(s) + Z_2(s) = \frac{ms^3 + \frac{mp^2 + (2mp_1+\alpha)p + \alpha p_1}{p_1 + p}s^2 + \frac{mp_1p^2 + \beta p + \beta p_1}{p_1+p}s + \gamma}{(s+p_1)(s+p)^2}.
\end{equation}
Combining (1) with \eqref{eq: Z1 + Z2 No3 and Noe configurations}, one obtains
\begin{align}
m &= k,
\label{eq: 101 No3 and Noe configurations} \\
\frac{mp^2 + (2mp_1+\alpha)p + \alpha p_1}{p + p_1} &= k(p_1 + 2z),
\label{eq: 102 No3 and Noe configurations}  \\
\frac{mp_1p^2 + \beta p + \beta p_1}{p + p_1} &= kz(2p_1+z),
\label{eq: 103 No3 and Noe configurations}  \\
\gamma &= kp_1z^2.
\label{eq: 104 No3 and Noe configurations}
\end{align}
It follows from \eqref{eq: Condition01 Noe configurations not realizable} and \eqref{eq: 104 No3 and Noe configurations} that
\begin{equation}  \label{eq: alpha No3 and Noe configurations}
\alpha = \frac{kp_1z^2}{p(p+2p_1)}.
\end{equation}
By \eqref{eq: Condition02 Noe configurations not realizable}, \eqref{eq: 104 No3 and Noe configurations}, and \eqref{eq: alpha No3 and Noe configurations}, one obtains
\begin{equation}  \label{eq: beta No3 and Noe configurations}
\beta = \frac{2kp_1z^2(p+p_1)^2}{(p+2p_1)^2p}.
\end{equation}
Substituting \eqref{eq: 101 No3 and Noe configurations} and \eqref{eq: 104 No3 and Noe configurations}--\eqref{eq: beta No3 and Noe configurations} into \eqref{eq: Condition03 Noe configurations not realizable}, \eqref{eq: 102 No3 and Noe configurations}, and \eqref{eq: 103 No3 and Noe configurations} gives
\begin{align}
\frac{kp_1^2z^2}{(p+2p_1)^2} &> 0,
\label{eq: equivalent Condition01 No3 and Noe configurations not realizable} \\
2pp_1^3 - (p^2-4zp+z^2)p_1^2 - p(3p^2-6zp+z^2)p_1 - p^3(p-2z) &= 0,  \label{eq: equivalent Condition02 No3 and Noe configurations not realizable}
\end{align}
and
\begin{equation} \label{eq: equivalent Condition03 No3 and Noe configurations not realizable}
\begin{split}
2z(4p-z)p_1^4 -2p(2p^2-8zp+z^2)p_1^3 &- 2p^2(2p^2-5zp-z^2)p_1^2 \\
&- p^3(p+z)(p-3z)p_1 + z^2p^4 = 0,
\end{split}
\end{equation}
respectively. It is obvious that \eqref{eq: equivalent Condition01 No3 and Noe configurations not realizable} holds. It is calculated that the resultant of \eqref{eq: equivalent Condition02 No3 and Noe configurations not realizable} and \eqref{eq: equivalent Condition03 No3 and Noe configurations not realizable} in $p_1$ is
$-4z^3p^{10}(4p-z)(p^{10} - 16zp^9 + 118z^2p^8 - 476z^3p^7 + 1066z^4p^6 - 1372z^5p^5 + 1064z^6p^4 - 524z^7p^3 + 161z^8p^2 - 28z^9p + 2z^{10})$. Since the condition of
Lemma~3 does not hold, it is implied that there exists at least one common root between \eqref{eq: equivalent Condition02 No3 and Noe configurations not realizable} and \eqref{eq: equivalent Condition03 No3 and Noe configurations not realizable} if and only if \eqref{eq: Condition five-reactive-element configurations realizable 04} holds  with $p < z/(2 + \sqrt{5})$.
This further implies that  $p_1 > 0$.

\textit{Sufficiency.}
Based on the discussion in the necessity part, there exists $p_1 > 0$ such that \eqref{eq: equivalent Condition01 No3 and Noe configurations not realizable}--\eqref{eq: equivalent Condition03 No3 and Noe configurations not realizable} hold. Let $m$, $\gamma$, $\alpha$, and $\beta$ satisfy \eqref{eq: 101 No3 and Noe configurations} and \eqref{eq: 104 No3 and Noe configurations}--\eqref{eq: beta No3 and Noe configurations}, which implies $\alpha$, $\beta$, $\gamma$, $m$ $> 0$. Therefore, \eqref{eq: Condition01 Noe configurations not realizable}--\eqref{eq: Condition03 Noe configurations not realizable},
\eqref{eq: 102 No3 and Noe configurations}, and \eqref{eq: 103 No3 and Noe configurations} hold. Therefore,
$Z(s)$ can be written in the form of \eqref{eq: Z1 + Z2 No3 and Noe configurations}. Decompose $Z(s)$ as $Z(s) = Z_1(s) + Z_2(s)$, where $Z_1(s)$ is in the form of \eqref{eq: Z1 No3 (s+p1)(s+p)} with $m$, $p_1$, $p$ $> 0$ and $q = p_1p/(p_1 + p)$, and $Z_2(s)$ is in the form of \eqref{eq: 02 Noe configurations not realizable} with $\alpha$, $\beta$, $\gamma$ $> 0$.
By letting $R_1 = m$, $C_1 = 1/(mq)$, and $L_1 = m/(p_1 + p)$, $Z_1(s)$ is realizable as in Fig.~\ref{fig: Two-reactive configurations N1}(c).
Let $C_{21}$, $C_{22}$, $R_{21}$, and $L_{21}$ satisfy \eqref{eq: C21 Noe configurations not realizable}--\eqref{eq: L21 Noe configurations not realizable}. Since \eqref{eq: Condition03 Noe configurations not realizable} hold, $C_{21}$, $C_{22}$, $R_{21}$, $L_{21}$ $> 0$. Based on the discussion in the proof of Lemma~11,  \eqref{eq: 03 Noe configurations not realizable}--\eqref{eq: 08 Noe configurations not realizable} hold. Therefore, $Z_2(s)$ can be realized as the configuration in Fig.~\ref{fig: Three-reactive Four-element N2}(e). The sufficiency part is proved.
\end{proof}



\ifCLASSOPTIONcaptionsoff
  \newpage
\fi

\end{document}